\numberwithin{equation}{section}
\numberwithin{figure}{section}
\newtheorem{theorem}{Theorem}[section]
\newenvironment{proof}{{ \it Proof:\quad}}{\hfill $\blacksquare$\par}
\newtheorem{lemma}{Lemma}[section]
\newtheorem{proposition}{Proposition}[section]
\newtheorem{corollary}{Corollary}[section]
\newtheorem{definition}{Definition}[section]
\newtheorem{remark}{Remark}[section]
\newcommand{\f}{\frac}
\newcommand{\lf}{\left}
\newcommand{\rg}{\right}
\newcommand\tbbint{{-\mkern -16mu\int}}
\newcommand\dbbint{{-\mkern -19mu\int}}
\newcommand\bbint{
{\mathchoice{\dbbint}{\tbbint}{\tbbint}{\tbbint}}
}
\title{Asymptotic Stability of Phase Separation States for Compressible Immiscible Two-Phase Flow with Periodic Boundary Condition in 3D}
\author{Yazhou Chen\thanks{Department of Mathematics, College of Mathematics and Physics, Beijing University of Chemical Technology, Beijing 100029,
		P R  China (chenyz@mail.buct.edu.cn).}
\and Hakho Hong\thanks{Institute of Mathematics, State Academy of Sciences, Pyongyang,
		D P R Korea (hhhong@star-co.net.kp and hhong@amss.ac.cn).}
\and Xiaoding Shi\thanks{Corresponding author. Department of Mathematics,  College of Mathematics and Physics, Beijing University of Chemical Technology, Beijing 100029,
		P R  China (shixd@mail.buct.edu.cn) }
}
\date{}
\begin{document}
	\maketitle
	\begin{abstract}
		This paper is concerned with a diffuse interface model called as Navier-Stokes/Cahn-Hilliard system. This model is usually used to describe the motion of immiscible two-phase flow with diffusion interface. For the periodic boundary value problem of this system in torus $\mathbb{T}^3$, we prove that there exists a global unique strong solution  near the phase separation state, which means no vacuum, shock wave, mass  concentration, interface collision and rupture will be developed in finite time. Furthermore, we established the large time behavior of these global strong solution of this system. In particular, we find that the phase field decays algebraically to the phase separation state.

		\vspace{.20cm}\noindent\textbf{MSC 2020:}
		35B40, 35B65, 35L65, 76N05, 76N10, 76T10.
		
		\vspace{.20cm} \noindent\textbf{Keywords:} Navier-Stokes/Cahn-Hilliard
		system, existence, uniqueness, large-time behavior.
	\end{abstract}
	
\section{Introduction}
\setcounter{equation}{0}

 \ \ \ \  Two-phase flows or multi-phase flows are important in many industrial applications, for
instance, in aerospace, chemical engineering, micro-technology and so on. They have at-
tracted studies from many engineers, geophysicists and astrophysicists. We focus on diffuse
interface model which describes the motion of a mixture of two compressible viscous flu-
ids with different densities. Macroscopically immiscible two-phase fluids are assumed to
be separated by a sharp interface. However, in order to describe topological transitions,
such as droplet formation, coalescence of several droplet or droplet breakup, we need to
take into account a partial mixing on a small length scale in the model. As a result, the
sharp interface of the two fluids is replaced by a narrow transition layer, and an order pa-
rameter related to the concentration difference of both fluids is introduced.

In this paper, we consider the following barotropic  compressible Navier-Stokes/Cahn-Hilliard system (see Heida-M\'alek-Rajagopal \cite{HMR 12} and Abels-Feireisl \cite{AF 08}) describing immiscible two-phase flow with diffuse interface:
\begin{equation}\label{11h}\begin{cases}
\displaystyle\rho_{t}+{\rm div}(\rho \mathbf{u})= 0, \\
\displaystyle\lf(\rho \mathbf{u}\rg)_t+{\rm div}(\rho \mathbf{u}\otimes \mathbf{u})+\nabla p(\rho)
={\rm div}\mathbf{S}-{\rm div}\lf(\nabla\phi\otimes \nabla\phi-\f{|\nabla\phi|^2}{2}\mathbb{I}\rg),\\
\displaystyle(\rho\phi)_{t}+{\rm div}(\rho\phi \mathbf{u})=\Delta\mu, \\
\displaystyle  \mu=-\f{\epsilon}{\rho}\Delta\phi+\frac{1}{\epsilon}\partial_\phi f(\phi),
\end{cases}
\end{equation}
where $\mathbf{x}=(x_1,x_2,x_3)\in\mathbb{T}^3$, $\mathbb{T}^3$ is the torus in $\mathbb{R}^3$, $t>0$ the time variable.
The unknown functions $\rho, \mathbf{u}$  and $\phi$ represent  the density, velocity and phase field respectively. Notice that, here $\{ \mathbf{x}:\phi(\mathbf{x},t) = 1\}$ is occupied by fluid 1 and $\{ \mathbf{x}:\phi(\mathbf{x},t) = -1\}$ by fluid 2 in the area of immiscible two-phase flow. The  phase function $\phi$ is introduced to distinguish between different fluids.
In terms of simple, taking any volume element $V$ in the flow, $M_i$ is assumed to be the mass of the components in the representative material volume $V$, $\phi_i=\frac{\rho_i}{\rho}$ the mass concentration, $\rho_i=\frac{M_i}{V}$  the apparent mass density of the fluid $i~(i=1,2)$. $\rho=\rho_1+\rho_2$ the total density, and the phase function $\phi=\phi_1-\phi_2$.  Obviously, We can determine the location of the spread interface by the phase function $\phi$.

The constant $\epsilon>0$ is the thickness of the interface between the phases,
and the deformation tensor $\mathbf{S}$ is given by
\begin{equation}\label{S}
 \mathbf{S}=2\nu(\phi) \mathbf{D}(\mathbf{u})+\lambda(\phi) {\rm div}\mathbf{u}\mathbb{I},
\end{equation}
where $\mathbf{D}(\mathbf{u})=\f{1}{2}\lf(\nabla \mathbf{u}+\nabla^{\top} \mathbf{u}\rg)$,  $\top$ represents the transpose of a matrix, $\mathbb{I}$ is the identity matrix, $\nu(\phi)$ and  $\lambda(\phi)$ are two viscosity coefficients.
In this paper, we suppose that $\nu(\cdot), \lambda(\cdot)\in C^{3}(\mathbb{R}) $  and there exists positive constant $\nu_0$ such that
\begin{equation}\label{h22}
\nu(s)\geq \nu_0>0,\quad  \lambda(s)\geq 0.
\end{equation}
Also, $p=p(\rho)$ is the pressure,  satisfies
\begin{equation}\label{h23}
p'(\rho)>0 \quad \text{for all}\,\, \rho>0.
\end{equation}
Here $f$ is the potential function.  It is reasonable to take a potential function  $f$ as follows (see  Heida-M\'alek-Rajagopal \cite{HMR 12})
\begin{equation}\label{hhh}f(\phi)=\f{\phi^4}{4}-\f{\phi^2}{2}.\end{equation}

\begin{remark}
In the theory of the Cahn-Hilliard equation, double-well structural potential is often considered.	A typical example
of such potential is of logarithmic type
\begin{equation*}\label{h12}
f(\phi)=\f{\alpha_1}{2}\lf((1-\phi)\ln(\f{1-\phi}{2})+(1+\phi)\ln(\f{1+\phi}{2})\rg)-\f{\alpha_2}{2}\phi^2,\end{equation*} where $\alpha_1$ and $\alpha_2$ are positive constants,
which is suggested by Cahn-Hilliard \cite{CH 58}.
However, this potential is usually replaced by a polynomial approximation of the type $\beta_1\phi^4-\beta_2\phi^2$,  where $\beta_1$ and $\beta_2$ are positive constants (for example, see \cite{Y 94}, \cite{DLL 13}).
Therefore,  without loss of generality, the approximate expression \eqref{hhh} is usually adopted.
\end{remark}

The main purpose of this paper is to give a rigorous analysis of the asymptotic stability of the phase separation state of immiscible two-phase flow. We consider the initial boundary problem of the system  \eqref{11h}  in torus $\mathbb{T}^3=(0,1)^3$ with the periodic boundary condition
\begin{equation}\label{g15}
(\rho, \mathbf{u}, \phi)(\mathbf{x}+\mathbf{L},t)=(\rho, \mathbf{u}, \phi)(\mathbf{x},t), \,\, t\geq0,
\,\,\, \mathbf{L}=(1,1, 1),
\end{equation}
and the initial  condition
\begin{equation}\label{g14}
(\rho, \mathbf{u}, \phi)(\mathbf{\mathbf{x}},0)=(\rho_0, \mathbf{u}_0, \phi_0)(\mathbf{x}).\quad \mathbf{x}\in \mathbb{T}^3.\end{equation}

\vspace{0.2cm} For the last few decades, the mathematical study of diffusive interface models for compressible immiscible two-phase flow has been attracted by many researchers.
For the system \eqref{11h} which can be regarded as a simple variant of the model derived originally by Heida-M$\mathrm{\acute{a}}$lek-Rajagopal \cite{HMR 12}, Abels-Feireisl \cite{AF 08} proved existence of global
weak solutions to the initial boundary value problem in bounded domain $\Omega \subset\mathbb{R}^3$. For the more general  system,
 Kotschote-Zacher \cite{KZ 15} established a local existence and uniqueness
of strong solutions for the initial boundary value problem  in bounded domain $\Omega \subset\mathbb{R}^n(n=2,3)$.
Recently, in the case of one-dimensional initial boundary value problem, Chen-He-Mei-Shi \cite{CHMS 18} studied the global existence and the large time behavior of the strong solutions, even for the large initial disturbance of the density and the large velocity data.
For the full compressible Navier-Stokes/Cahn-Hilliard system with heat-conductivity, a case more closer to the physical reality, there exist some recent progresses to derive a thermodynamically consistent model (see \cite{HMR 12, K 16, FK 17, FK 18} and so on).

It should be noted that Navier-Stokes/Allen-Cahn system is another commonly used immiscible two-phase flow model for diffusion interfaces. The essential difference between the two models is that, the phase field equation in the Navier-Stokes/Cahn-Hilliard system is a fourth order equation, which is conserved with respect to the $\rho\phi$. But the phase field equation in the Navier-Stokes/Allen-Cahn system  is a second order equation, and it is not conserved with respect to the $\rho\phi$. Therefore, the Navier-Stokes/Cahn-Hillard system is more suitable to describe the phase separation phenomenon, while the Navier-Stokes/Allen-Cahn is more suitable to describe the phase transition phenomenon. There are also many recent studies for the compressible Navier-Stokes/Allen-Cahn system recently, see  \cite{FPRS 10, K 12, DLL 13, CG 17,  LY 18, HMSW 18, CWZ 19, DLT 19, YZ 19}  and references therein.



\vspace{0.2cm}\textbf{Notation:} In this paper,
$L^p(\mathbb{T}^3)$ and $W^{k,p}(\mathbb{T}^3)$ denote the usual Lebesgue and Sobolev spaces on $\mathbb{T}^3$, with norms  $\|\cdot\|_{L^p}$ and $\|\cdot\|_{W^{k,p}}$, respectively. When $p=2$, we denote $W^{k,p}(\mathbb{T}^3)$ by $H^k(\mathbb{T}^3)$ with  the norm  $\|\cdot\|_{H^k}$ and $\|\cdot\|_{H^0}=\|\cdot\|$ will be used to denote the usual
$L^2-$norm.  The notation $\|(A_1,A_2, \cdots, A_l)\|_{H^k}$
means the summation of $\|A_i\|_{H^k}$ from $i=1$
to $i=l$.
For an integer $m$, the symbol $\nabla^m$
denotes the summation of all terms $D^{\alpha}$ with the multi-index $\alpha$
satisfying $|\alpha|=m$. We use $C, c$ to denote the constants which are
independent of $\mathbf{x},t$
and may change from line to line.
For $3\times 3$-matrices $F, H$, denote $
F:H=\sum_{i,j=1}^{3}F_{ij}H_{ij}$, $|F|\equiv(F:F)^{1/2}$.  For vectors $a$ and $b$, we denote their tensor product by $a\otimes b:=(a_ib_j)_{3\times 3}$. 
The integral mean is denoted by the following symbol
\begin{equation}\label{average}
 \bbint_{\mathbb{T}^3} \cdot d\mathbf{x}\overset{\mathrm{def}}=\frac{1}{|\mathbb{T}^3|}\int_{\mathbb{T}^3} \cdot d\mathbf{x}.
\end{equation}
 We will employ the notation $a\lesssim b$ to mean that $a\leq  Cb$ for a universal constant $C>0$ that only depends on the parameters coming from the problem.
Next, in order to establish the negative Sobolev estimates, we should review the following useful
results. But let us first introduce the following necessary definition.

\begin{definition}\label{def21}
	For $s\in\mathbb{R}$, $\dot{H}_{\mathrm{per}}^s(\mathbb{T}^3)$ is defined as the homogeneous Sobolev space of $f,$ with the periodic condition and following norm:
	$$
	\|f\|_{\dot{H}^{s}}\overset{\mathrm{def}}=\|\Lambda^s f\|,
	$$where $\Lambda^s$ is defined by
	$$(\Lambda^s f)(\mathbf{x})\overset{\mathrm{def}}=\sum_{\mathbf{k}\in \mathbb{Z}^3}(2\pi\mathbf{k})^s \hat{f}(\mathbf{k})e^{2\pi i \mathbf{x}\cdot \mathbf{k}}, \quad \mathbf{k}=(k_1,\cdots, k_3),$$ where $\hat{f}(\mathbf{k})$ is the Fourier transform of $f$ defined by
	$$\hat{f}(\mathbf{k})=\int_{\mathbb{T}^3}f(\mathbf{x})e^{-2\pi i \mathbf{x}\cdot \mathbf{k}} d\mathbf{x}.$$
\end{definition}
Before giving the main results, for convenience, we present several important Lemmas commonly used in this paper.
We set
\begin{equation}\label{periodic function space}
  W^{k,q}_{\mathrm{per}}(\mathbb{T}^3)\overset{\mathrm{def}}=\Big\{w\in W^{k,q}(\mathbb{T}^3)\Big| w(\mathbf{x}+\mathbf{L})=w(\mathbf{x})\Big\}.
\end{equation}
Then, we  need the following result which is a direct consequence of Gagliardo-Nirenberg inequality  (\cite{N 59}):
\begin{lemma} \label{lem21}
	Let $l,s$ and $k$ be any real numbers satisfying $0\leq
	l,s<k$, and let  $p, r, q \in [1,\infty]$ and $\f{l}{k}\leq
	\theta\leq 1$ such that
	$$\frac{l}{3}-\frac{1}{p}=\lf(\frac{s}{3}-\frac{1}{r}\rg)(1-\theta)+\lf(\frac{k}{3}-\frac{1}{q}\rg)\theta.
	$$ Then,  we have
	\begin{equation}
	\label{hg20}
	\|\nabla^l w\|_{L^p}\lesssim \|\nabla^s w\|_{L^r}^{1-\theta}\|\nabla^k	w\|_{L^q}^{\theta},
	\end{equation} for any $\displaystyle w\in W^{k,q}_{\mathrm{per}}(\mathbb{T}^3)$.
\end{lemma}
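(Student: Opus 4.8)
The statement is the torus version of the classical Gagliardo--Nirenberg interpolation inequality, so the plan is to reduce it to the Euclidean inequality of \cite{N 59} and transfer the bound to $\mathbb{T}^3$ through the periodic Fourier framework already fixed in Definition \ref{def21}. I would first remove the zero frequency: writing $w=\bbint_{\mathbb{T}^3}w\,d\mathbf{x}+\tilde w$ with $\tilde w$ of zero mean, the constant part drops out of every $\nabla^m w$ with $m\ge1$, while for the remaining undifferentiated quantities the Poincar\'e inequality on the compact torus controls $\tilde w$ by its gradient. After this reduction it suffices to prove the purely multiplicative bound $\|\nabla^l w\|_{L^p}\lesssim\|\nabla^s w\|_{L^r}^{1-\theta}\|\nabla^k w\|_{L^q}^{\theta}$ for mean-zero $w$, for which the homogeneous quantities $\|\nabla^m\cdot\|$ behave as genuine norms; this is exactly the mechanism that makes the additive lower-order term of the bounded-domain Gagliardo--Nirenberg inequality disappear and leaves the clean form \eqref{hg20}.

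The core estimate I would obtain by a dyadic (discrete Littlewood--Paley) decomposition on the lattice $\mathbb{Z}^3$. Let $P_j$ project onto the frequency annulus $|\mathbf{k}|\sim 2^j$. On each block the Bernstein inequalities give the derivative gain $\|\nabla^l P_j w\|_{L^p}\sim 2^{jl}\|P_j w\|_{L^p}$ and the summability gain $\|P_j w\|_{L^p}\lesssim 2^{3j(1/q-1/p)}\|P_j w\|_{L^q}$ for $q\le p$. Estimating the low frequencies through $\|\nabla^s w\|_{L^r}$ and the high frequencies through $\|\nabla^k w\|_{L^q}$, and then summing over $j$ with the square-function characterization of the $L^p$ norm, one finds that the two resulting geometric series converge precisely when the exponents obey the balance relation in the hypothesis; the constraint $l/k\le\theta\le1$ is exactly what keeps both series on the convergent side of the summation threshold. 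This reproduces \eqref{hg20}. I note that since the torus carries no nontrivial scaling, the balance relation cannot be read off from a dilation argument as on $\mathbb{R}^3$; here it emerges intrinsically as the convergence condition for the dyadic sum.

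The main obstacle is the endpoint behaviour. The square-function characterization and the clean Bernstein inequalities are available only for $1<p,q,r<\infty$, whereas the extreme exponents $1$ and $\infty$ are precisely where the Gagliardo--Nirenberg inequality can genuinely fail---for instance the critical Sobolev embedding $\dot H^{3/q}\not\hookrightarrow L^\infty$, together with Nirenberg's well-known exceptional case (a resonance among the exponents) in which the estimate holds only on the half-open range of $\theta$. These boundary indices have to be checked separately against the hypotheses, using the direct Sobolev embedding when $\theta=1$ and interpolating away from the forbidden endpoint otherwise. Once they are disposed of, the interior argument above yields \eqref{hg20} for all admissible $(l,s,k,p,r,q,\theta)$.
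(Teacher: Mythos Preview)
The paper does not prove this lemma at all: it is introduced as ``a direct consequence of Gagliardo--Nirenberg inequality'' with a bare citation to Nirenberg \cite{N 59}, and no argument is given. Your proposal is therefore far more detailed than anything in the paper. The Littlewood--Paley route you sketch is a standard and legitimate way to obtain multiplicative interpolation inequalities on $\mathbb{T}^3$; for mean-zero functions the dyadic Bernstein-and-sum argument does yield \eqref{hg20} in the non-endpoint range $1<p,q,r<\infty$, and the endpoint caveats you flag are the right ones.

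One genuine gap in your reduction step: it does not recover the lemma as stated for arbitrary $w\in W^{k,q}_{\mathrm{per}}$. When $l=0$ the left-hand side $\|w\|_{L^p}$ sees the mean $\bbint_{\mathbb{T}^3} w\,d\mathbf{x}$, and no Poincar\'e estimate on $\tilde w$ will control that mean by a positive power of $\|\nabla^k w\|_{L^q}$ --- take $w\equiv 1$ with $l=s=0$, $k\ge1$, $\theta>0$ and the inequality fails outright. The purely multiplicative form \eqref{hg20} is only valid for mean-zero $w$ or when $l\ge1$; the general case requires the additive lower-order term, which the paper itself tacitly reinstates the one time it needs the inequality at order zero (the bound on $\|\phi\|_{L^\infty}$ in the proof of Lemma \ref{lem31}). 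So your dyadic argument proves the correct theorem; that correct theorem is just slightly narrower than the lemma as written.
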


Next, we need the following result:
\begin{lemma} \label{lem22-1}
	  Let $f(\sigma)$ and $f(\sigma, w)$ be  smooth functions of $\sigma$ and $(\sigma, w)$, respectively, with bounded
	derivatives of any order, and $\|\sigma\|_{L^\infty(\mathbb{T}^3)}\lesssim 1$. Then for any integer $m\geq 1$, we have
	\begin{equation}	\label{hg21}
\begin{aligned}
	\|\nabla^m f(\sigma)\|_{L^p}\leq C \|\nabla^m \sigma\|_{L^p},\quad
\|\nabla^m f(\sigma,w)\|_{L^p}\leq C  \|\nabla^m (\sigma,w)\|_{L^p},\quad \forall 1\leq p\leq \infty,\end{aligned}
	\end{equation}
 where $C$	may depend $f$ and $m$.	
\end{lemma}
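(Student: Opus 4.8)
The plan is to reduce both inequalities in \eqref{hg21} to the Gagliardo--Nirenberg inequality of Lemma \ref{lem21} via the higher-order chain rule (Fa\`a di Bruno's formula). First I would expand $\nabla^m f(\sigma)$: it is a finite linear combination of terms of the form
\[
f^{(j)}(\sigma)\,\nabla^{\alpha_1}\sigma\,\nabla^{\alpha_2}\sigma\cdots\nabla^{\alpha_j}\sigma,
\]
where $1\leq j\leq m$, each $\alpha_i\geq 1$, and $\alpha_1+\cdots+\alpha_j=m$. Because $f$ is smooth with bounded derivatives of every order and $\|\sigma\|_{L^\infty}\lesssim 1$, the composite factor obeys $|f^{(j)}(\sigma)|\leq C$ pointwise. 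Hence it suffices to bound each product $\|\nabla^{\alpha_1}\sigma\cdots\nabla^{\alpha_j}\sigma\|_{L^p}$ by $\|\nabla^m\sigma\|_{L^p}$, uniformly over the finitely many admissible index tuples.

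For a fixed tuple I would apply H\"older's inequality with the exponents $p_i=mp/\alpha_i$, which satisfy $\sum_{i=1}^j 1/p_i=(1/mp)\sum_i\alpha_i=1/p$, to get
\[
\|\nabla^{\alpha_1}\sigma\cdots\nabla^{\alpha_j}\sigma\|_{L^p}\leq\prod_{i=1}^j\|\nabla^{\alpha_i}\sigma\|_{L^{p_i}}.
\]
To each factor I apply Lemma \ref{lem21} with $l=\alpha_i$, $s=0$, $r=\infty$, $k=m$, $q=p$ and $\theta_i=\alpha_i/m$; one checks at once that $\alpha_i/m\leq\theta_i\leq 1$ and that the scaling identity of Lemma \ref{lem21} holds, since $\f{\alpha_i}{3}-\f{1}{p_i}=\lf(\f{m}{3}-\f{1}{p}\rg)\f{\alpha_i}{m}$. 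This gives $\|\nabla^{\alpha_i}\sigma\|_{L^{p_i}}\lesssim\|\sigma\|_{L^\infty}^{1-\alpha_i/m}\|\nabla^m\sigma\|_{L^p}^{\alpha_i/m}$. Taking the product over $i$ and using $\sum_i(1-\alpha_i/m)=j-1$ and $\sum_i(\alpha_i/m)=1$ yields
\[
\|\nabla^{\alpha_1}\sigma\cdots\nabla^{\alpha_j}\sigma\|_{L^p}\lesssim\|\sigma\|_{L^\infty}^{\,j-1}\|\nabla^m\sigma\|_{L^p}\lesssim\|\nabla^m\sigma\|_{L^p},
\]
where the last step uses $\|\sigma\|_{L^\infty}\lesssim 1$ and $j\geq 1$. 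Summing over the finitely many terms of the chain-rule expansion proves the first estimate.

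The second estimate follows by the identical scheme applied to the multivariate Fa\`a di Bruno formula for $f(\sigma,w)$: $\nabla^m f(\sigma,w)$ is a finite combination of terms $\p^{a}_\sigma\p^{b}_w f(\sigma,w)$ times products of the derivatives $\nabla^{\alpha_i}\sigma$ and $\nabla^{\beta_{i'}}w$ whose orders sum to $m$. The bounded-derivative hypothesis again controls the prefactors, and H\"older together with Lemma \ref{lem21}—now interpolating between $\|(\sigma,w)\|_{L^\infty}$ and $\|\nabla^m(\sigma,w)\|_{L^p}$ with the same choice of exponents—delivers the bound $\|\nabla^m(\sigma,w)\|_{L^p}$.

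The step I expect to require the most care is the uniform verification of the Gagliardo--Nirenberg admissibility over all index patterns, in particular at the endpoint exponents $p=1$ and $p=\infty$ (where several $p_i$ degenerate) and in the boundary case $j=1$, $\theta_i=1$, in which the interpolation inequality collapses to a trivial identity. Once these are checked, the combinatorial factors from Fa\`a di Bruno are harmless and the constant $C$ depends only on $f$ and $m$, as claimed.
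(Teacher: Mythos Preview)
Your proposal is correct and is exactly the standard argument the paper has in mind: the paper's own proof is the single sentence ``can be obtained directly making use of the Gagliardo--Nirenberg inequality (Lemma~\ref{lem21}), we therefore omitted here,'' and your Fa\`a di Bruno $+$ H\"older $+$ Lemma~\ref{lem21} interpolation is precisely how one unpacks that remark. One small point worth flagging: for the two-variable estimate your interpolation step tacitly uses $\|w\|_{L^\infty}\lesssim 1$, a hypothesis the lemma as stated omits (only $\|\sigma\|_{L^\infty}\lesssim 1$ is assumed); this is an oversight in the paper's statement rather than in your argument, and in every application in the paper the extra variable is $\phi$, for which $\|\phi\|_{L^\infty}\lesssim 1$ holds by \eqref{hr33}.
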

\begin{proof} The \eqref{hg21} can be obtained directly making use of the Gagliardo-Nirenberg inequality (Lemma 1.1), we therefore omitted here for the sake of brevity.
\end{proof}

\vspace{0.3cm}Next, we recall the following Moser-type calculus inequalities (\cite{KM 81}):
\begin{lemma} \label{lem22}   Let $\Omega$ be a domain of $\mathbb{R}^3$. Then, 	
	for $f, g\in H^s(\Omega)\cap L^\infty(\Omega)$ and $|\alpha|\leq s,\,s>\f{3}{2}$, it holds that \begin{equation}
	\label{hg22}
	\|D^\alpha(fg)\|\lesssim \|f\|_{L^\infty}\|\nabla^s g\|+\|g\|_{L^\infty}\|\nabla^s f\|.
	\end{equation}
		
\end{lemma}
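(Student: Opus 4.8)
The plan is to derive the estimate from the scale-invariant Gagliardo--Nirenberg inequality of Lemma \ref{lem21} by means of the Leibniz rule. First I would expand, by the Leibniz formula,
\[
D^\alpha(fg)=\sum_{\beta\le\alpha}\binom{\alpha}{\beta}\,D^\beta f\,D^{\alpha-\beta}g,
\]
and isolate the two endpoint multi-indices $\beta=\alpha$ and $\beta=0$. These are handled immediately by H\"older's inequality, since $\|f\,D^\alpha g\|\le\|f\|_{L^\infty}\|\nabla^{|\alpha|}g\|$ and $\|(D^\alpha f)\,g\|\le\|g\|_{L^\infty}\|\nabla^{|\alpha|}f\|$, which are already of the desired form. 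It then remains only to control the intermediate terms with $1\le|\beta|\le|\alpha|-1$.

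For such a term set $j=|\beta|$ and estimate $\|D^\beta f\,D^{\alpha-\beta}g\|$ by H\"older's inequality with the conjugate exponents $p=2|\alpha|/j$ and $q=2|\alpha|/(|\alpha|-j)$, so that $\tfrac1p+\tfrac1q=\tfrac12$. I would then apply Lemma \ref{lem21} twice: to $\|\nabla^{j}f\|_{L^p}$ with lower order $0$, $L^\infty$ endpoint, top order $|\alpha|$ in $L^2$, and $\theta=j/|\alpha|$; and symmetrically to $\|\nabla^{|\alpha|-j}g\|_{L^q}$ with $\theta=(|\alpha|-j)/|\alpha|$. A short check of the dimensional balance $\tfrac{l}{3}-\tfrac1p=\big(\tfrac{|\alpha|}{3}-\tfrac12\big)\theta$ confirms these exponents are admissible and yields
\[
\|\nabla^{j}f\|_{L^p}\lesssim\|f\|_{L^\infty}^{1-j/|\alpha|}\|\nabla^{|\alpha|}f\|^{\,j/|\alpha|},\qquad
\|\nabla^{|\alpha|-j}g\|_{L^q}\lesssim\|g\|_{L^\infty}^{\,j/|\alpha|}\|\nabla^{|\alpha|}g\|^{1-j/|\alpha|}.
\]
Multiplying these and applying Young's inequality with the conjugate pair $|\alpha|/(|\alpha|-j)$ and $|\alpha|/j$ collapses the product
\[
\big(\|f\|_{L^\infty}\|\nabla^{|\alpha|}g\|\big)^{1-j/|\alpha|}\big(\|g\|_{L^\infty}\|\nabla^{|\alpha|}f\|\big)^{j/|\alpha|}
\lesssim \|f\|_{L^\infty}\|\nabla^{|\alpha|}g\|+\|g\|_{L^\infty}\|\nabla^{|\alpha|}f\|.
\]
Summing the finitely many terms and absorbing the binomial constants into $C$ gives the estimate at order $|\alpha|$.

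The part requiring the most care is purely the exponent bookkeeping rather than any deep analysis: one must verify that $\theta=j/|\alpha|$ lies in the admissible range $[\,|\beta|/|\alpha|,1\,]$ of Lemma \ref{lem21} and that the endpoint with $L^\infty$ and lower order $0$ is permitted, which is exactly the role of the hypotheses $f,g\in L^\infty$ and $s>\tfrac32$. The scheme delivers
\[
\|D^\alpha(fg)\|\lesssim \|f\|_{L^\infty}\|\nabla^{|\alpha|}g\|+\|g\|_{L^\infty}\|\nabla^{|\alpha|}f\|,
\]
and at the top order $|\alpha|=s$ this is exactly the asserted inequality; this is the form in which the lemma is invoked throughout, the homogeneous norms $\|\nabla^{|\alpha|}\cdot\|$ for lower $|\alpha|$ being in any case dominated within the ambient $H^s$ setting by the same interpolation performed at order $|\alpha|$.
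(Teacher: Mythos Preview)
Your argument is correct and is the classical proof of the Moser product estimate. The paper does not prove this lemma at all: it merely states the inequality and cites Klainerman--Majda \cite{KM 81}, whose argument is essentially the Leibniz expansion followed by H\"older and Gagliardo--Nirenberg interpolation that you carry out here.

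One remark on your final paragraph. Your scheme delivers $\|\nabla^{|\alpha|}\cdot\|$ on the right-hand side, and at top order $|\alpha|=s$ this is exactly the stated inequality; this is also the only form the paper ever invokes (see the treatment of $I_1^2$ in Lemma~\ref{lem33}, of $I_4^2$ in Lemma~\ref{lem32}, etc.). For strict $|\alpha|<s$ the literal statement with the homogeneous seminorm $\|\nabla^s\cdot\|$ on the right is in fact false (take $f\equiv 1$ and $g$ affine, $|\alpha|=1$, $s=2$), so the lemma should be read either at top order or with the inhomogeneous $H^s$-norm on the right. Your closing observation that the top-order case is what matters is therefore exactly right, though the sentence about lower-order $\|\nabla^{|\alpha|}\cdot\|$ being ``dominated'' by the same interpolation should be understood as passing to the inhomogeneous norm rather than the homogeneous one.
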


\vspace{0.3cm}By the Parseval theorem and H\"older's inequality, it is easy to check  the following result (see \cite{GW 11}).
\begin{lemma} \label{lem24}
	Let $s\geq 0$ and $l\geq 0$. Then, we have
	\begin{equation}
	\label{hg24}\|\nabla^lf\|\lesssim\|\nabla^{l+1}f\|^{1-\theta}\|f\|^{\theta}_{\dot{H}^{-s}},\quad \mathrm{with}\,\,\,\theta=\f{1}{l+s+1}.\end{equation}
\end{lemma}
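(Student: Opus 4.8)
The plan is to pass to the Fourier side and reduce the inequality to a pointwise interpolation of the frequency weights, closed off by Hölder's inequality applied to the resulting series. First I would invoke the Parseval theorem to rewrite each of the three norms as a weighted $\ell^2$ sum over $\mathbf{k}\in\mathbb{Z}^3$. Writing $\xi=|2\pi\mathbf{k}|$, one has
\[
\|\nabla^l f\|^2=\sum_{\mathbf{k}}\xi^{2l}|\hat f(\mathbf{k})|^2,\qquad
\|\nabla^{l+1} f\|^2=\sum_{\mathbf{k}}\xi^{2(l+1)}|\hat f(\mathbf{k})|^2,\qquad
\|f\|_{\dot H^{-s}}^2=\sum_{\mathbf{k}}\xi^{-2s}|\hat f(\mathbf{k})|^2,
\]
so that the claimed estimate is equivalent to
\[
\sum_{\mathbf{k}}\xi^{2l}|\hat f(\mathbf{k})|^2\lesssim\lf(\sum_{\mathbf{k}}\xi^{2(l+1)}|\hat f(\mathbf{k})|^2\rg)^{1-\theta}\lf(\sum_{\mathbf{k}}\xi^{-2s}|\hat f(\mathbf{k})|^2\rg)^{\theta}.
\]

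The key algebraic step is to factor the summand on the left so that it matches the two factors on the right. I would write $\xi^{2l}|\hat f|^2=\bigl(\xi^{2(l+1)}|\hat f|^2\bigr)^{1-\theta}\bigl(\xi^{-2s}|\hat f|^2\bigr)^{\theta}$; the $|\hat f|^2$ weights combine trivially since $(1-\theta)+\theta=1$, while matching the power of $\xi$ forces the identity $2l=2(l+1)(1-\theta)-2s\theta$, i.e. $2=2\theta(l+s+1)$. Solving this yields precisely $\theta=\f{1}{l+s+1}$, which is exactly the exponent asserted in the Lemma; this is where the specific value of $\theta$ comes from, rather than being an independent hypothesis. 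Once the factorization is in place, Hölder's inequality for series with the conjugate exponents $p=\f{1}{1-\theta}$ and $q=\f{1}{\theta}$ delivers the displayed bound immediately, and taking square roots completes the argument.

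The only point requiring a little care, and the closest thing here to an obstacle, is the zero frequency $\mathbf{k}=\mathbf{0}$, at which $\xi^{-2s}$ is singular when $s>0$. This is resolved by the convention built into the definition of $\dot H^{-s}_{\mathrm{per}}$, under which the summation effectively runs over $\mathbf{k}\neq\mathbf{0}$ (equivalently, one restricts to zero-mean functions); for $l>0$ the zero mode simultaneously drops out of the left-hand side because $\xi^{2l}=0$ there, so the restriction is consistent across all three norms. With this understood the whole computation is elementary, which is why the statement is flagged as easy to check, and the remaining manipulations are routine.
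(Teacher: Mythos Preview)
Your proof is correct and follows exactly the approach the paper indicates: the paper simply states that the result ``is easy to check'' by the Parseval theorem and H\"older's inequality, and your argument carries out precisely this computation on the Fourier side. Your treatment of the zero mode is also appropriate given the paper's $\dot H^{-s}_{\mathrm{per}}$ convention.
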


\vspace{0.3cm}If $s\in (0, 3)$, $\Lambda^{-s}g$ is the Riesz potential. Then, we have the following $L^p$ type inequality  by the discrete Hardy-Littlewood-Sobolev inequality (see \cite{CL2014},\cite{HLY2015},\cite{S 70}):

\begin{lemma} \label{lem25}
	Let $0<s<3, 1<p<q<\infty$ and $\f{1}{q}+\f{s}{3}=\f{1}{p}$. Then, we have
	\begin{equation}
	\label{hg25}\|\Lambda^{-s}f\|_{L^q}\lesssim\|f\|_{L^p}.\end{equation}
\end{lemma}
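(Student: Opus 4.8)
The plan is to realise $\Lambda^{-s}$ as convolution on the torus against an explicit periodic kernel and then to reduce \eqref{hg25} to the classical Hardy--Littlewood--Sobolev (fractional integration) theorem of Stein \cite{S 70}. Expanding $f$ in Fourier series and using Definition \ref{def21}, the multiplier $(2\pi|\mathbf{k}|)^{-s}$ is singular at $\mathbf{k}=0$, so it is understood to annihilate the zero frequency (the natural convention, since the estimate is applied to zero-mean quantities in the negative-order Sobolev bounds later on); with this convention
\[
\Lambda^{-s}f=G_s*f,\qquad
G_s(\mathbf{z})\overset{\mathrm{def}}=\sum_{\mathbf{k}\in\mathbb{Z}^3\setminus\{0\}}(2\pi|\mathbf{k}|)^{-s}e^{2\pi i\,\mathbf{z}\cdot\mathbf{k}},
\]
where $*$ is convolution over $\mathbb{T}^3$. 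It then suffices to understand the mapping properties of convolution against $G_s$.

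The heart of the argument is the local behaviour of $G_s$. I would show that $G_s$ is smooth on $\mathbb{T}^3\setminus\{0\}$ and that, near the origin inside the fundamental cell,
\[
G_s(\mathbf{z})=c_s|\mathbf{z}|^{-(3-s)}+R_s(\mathbf{z}),\qquad \|R_s\|_{L^\infty}\lesssim 1,
\]
so that the periodic kernel carries precisely the singularity of the Euclidean Riesz kernel on $\mathbb{R}^3$ up to a bounded correction. This comparison follows from Poisson summation, which identifies $G_s$ with the renormalised periodization $\sum_{\mathbf{n}}c_s|\mathbf{z}-\mathbf{n}|^{-(3-s)}$ of the Riesz kernel: the term $\mathbf{n}=0$ produces the stated singular part, while the sum over $\mathbf{n}\neq0$ is uniformly bounded for $\mathbf{z}$ in the cell. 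Accordingly I split $G_s=G_s^{\mathrm{loc}}+G_s^{\mathrm{reg}}$ with $G_s^{\mathrm{loc}}(\mathbf{z})=c_s|\mathbf{z}|^{-(3-s)}\chi(\mathbf{z})$ for a cut-off $\chi$ supported near $0$, and $G_s^{\mathrm{reg}}\in L^\infty(\mathbb{T}^3)$.

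Finally I estimate the two pieces. Convolution with the bounded kernel $G_s^{\mathrm{reg}}$ maps $L^p(\mathbb{T}^3)\to L^q(\mathbb{T}^3)$ for $p\le q$ directly by Young's inequality, since $\mathbb{T}^3$ has finite measure and $G_s^{\mathrm{reg}}\in L^r$ for every $r$. For the singular piece, $|\mathbf{z}|^{-(3-s)}\chi(\mathbf{z})$ belongs to the weak space $L^{3/(3-s),\infty}$, and the hypothesis $\f{1}{q}=\f{1}{p}-\f{s}{3}$ is exactly the exponent relation in the Hardy--Littlewood--Sobolev theorem (equivalently, in the weak-type Young inequality), which gives $\|G_s^{\mathrm{loc}}*f\|_{L^q}\lesssim\|f\|_{L^p}$ for $1<p<q<\infty$; adding the two contributions yields \eqref{hg25}. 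Alternatively, one may work entirely on the frequency side, converting the torus estimate into a summation over the lattice $\mathbb{Z}^3$ governed by the discrete Hardy--Littlewood--Sobolev inequality of \cite{CL2014,HLY2015}, which is the route indicated by the cited references. The single genuinely nontrivial step is the kernel expansion above: verifying that the periodization of the symbol $(2\pi|\mathbf{k}|)^{-s}$ reproduces the Euclidean Riesz singularity with only a bounded error. Once that is established, the $L^p$--$L^q$ bound is a routine application of the classical theory.
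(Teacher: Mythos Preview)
Your proposal is correct. The paper does not actually prove this lemma: it simply records the inequality as a consequence of the discrete Hardy--Littlewood--Sobolev inequality, citing \cite{CL2014,HLY2015,S 70}, and moves on. Your kernel-comparison argument---writing $\Lambda^{-s}f=G_s*f$, extracting the Euclidean Riesz singularity $c_s|\mathbf{z}|^{-(3-s)}$ from $G_s$ via (renormalised) Poisson summation, and then handling the singular part by the classical continuous Hardy--Littlewood--Sobolev theorem and the bounded remainder by Young---is a genuinely different and more self-contained route than the bare citation the paper gives. You also correctly isolate the one step that needs care (the periodization of a non-integrable kernel) and correctly note the zero-mode convention. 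The discrete-HLS alternative you mention at the end is precisely the path the paper's references point to; either approach works, with yours supplying an actual argument where the paper supplies only a pointer.
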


\vspace{0.3cm}Without loss of generality, we assume the thickness of the interface $\epsilon=1$ here, moreover, noticing that ${\rm div}\lf(\nabla\phi\otimes \nabla\phi\rg)=\nabla\lf(\f{|\nabla\phi|^2}{2}\rg)+\nabla\phi\Delta\phi$,
combining with  \eqref{hhh}-\eqref{g14},  \eqref{11h} is simplified to
\begin{equation}\label{h21}\begin{cases}\begin{aligned}
&\displaystyle\rho_{t}+{\rm div}(\rho \mathbf{u})= 0, \\
&\displaystyle
\lf(\rho \mathbf{u}\rg)_t+{\rm div}(\rho \mathbf{u}\otimes \mathbf{u})+\nabla p(\rho)+\nabla\phi\Delta\phi
=2{\rm div}\lf[\nu(\phi) \mathbf{D}(\mathbf{u})\rg]+\nabla\lf[\lambda(\phi) {\rm div}\mathbf{u}\rg],
	\\
&\displaystyle\rho\phi_{t}+\rho\mathbf{u}\cdot \nabla \phi =\Delta\mu, \\
&\displaystyle \rho\mu=-\Delta\phi+\rho\lf(\phi^3-\phi\rg), 
\\
&\displaystyle(\rho, \mathbf{u}, \phi)(\mathbf{x}+\mathbf{L},t)=(\rho, \mathbf{u}, \phi)(\mathbf{x},t),
\\
&\displaystyle(\rho, \mathbf{u}, \phi)(\mathbf{x},0)=(\rho_0, \mathbf{u}_0, \phi_0)(\mathbf{x}).
\end{aligned}\end{cases}
\end{equation}
Now, the main results is as following below:
\begin{theorem}  \label{theo 2.1}
	Assume that \eqref{h22}-\eqref{hhh} and 
\begin{equation}\label{A1}
\rho_0, \mathbf{u}_0, \phi_0\in
	H_{\mathrm{per}}^{3}(\mathbb{T}^3),\quad \inf_{\mathbf{x}\in \mathbb{T}^3 }\rho_0(\mathbf{x})>0.
\end{equation}
Then, there exists a  positive constant  $\delta>0$  such that if
	\begin{equation}\label{g17}
	\|\rho_0-\bar{\rho}\|_{H^{3}}+\|\mathbf{u}_0\|_{H^{3}}+\|\nabla\phi_0\|_{H^{2}}+\|\phi_0^2-1\|_{L^{2}}\leq	\delta,
	\end{equation}	where $\displaystyle\bar{\rho}=\bbint_{\mathbb{T}^3}\rho_0(\mathbf{x})d\mathbf{x},$ then
	the system \eqref{h21} admits a unique solution $(\rho,
	\mathbf{u}, \phi)$ on $[0,\infty)$ satisfying
	$$
	\begin{aligned}
	&\rho, \mathbf{u}, \phi \in C([0,\infty);H_{\mathrm{per}}^{3}(\mathbb{T}^3)),\,\,\,\rho \in L^2(0,\infty;H_{\mathrm{per}}^{3}(\mathbb{T}^3)),\\
	&\nabla \mathbf{u}\in L^2(0,\infty;H_{\mathrm{per}}^{3}(\mathbb{T}^3)),\,\,\,\nabla\phi \in L^2(0,\infty;H_{\mathrm{per}}^{4}(\mathbb{T}^3)).
	\end{aligned}$$
	and
	\begin{equation}\label{h18}
	\begin{aligned}
	&\displaystyle\|(\rho-\bar{\rho}, \mathbf{u})(t)\|^2_{H^{3}}+\|\nabla\phi(t)\|^2_{H^{2}}+\|\phi^2(t)-1\|_{L^{2}}^2
	\\
	&\displaystyle+\int_0^t \| \rho(\tau)-\bar{\rho}\|^2_{H^{3}}d\tau+\int_0^t \|\nabla \mathbf{u}(\tau)\|^2_{H^{3}}d\tau+\int_0^t \|\nabla\phi(\tau)\|^2_{H^{4}}d\tau\\
	&\displaystyle\leq  C
	\lf(\|(\rho_0-\bar{\rho}, \mathbf{u}_0)\|^2_{H^{3}}+\|\nabla\phi_0\|^2_{H^{2}}+\|\phi^2_0-1\|_{L^{2}}^2\rg),\ \ \forall t\geq 0,\end{aligned}\end{equation} where $C$ is the positive constant independent of
	$\mathbf{x}, t$ and  $ \delta$.	

Moreover, if  $(\rho_0-\bar{\rho}, \mathbf{u}_0, \nabla\phi_0, \phi_0^2-1)\in \dot{H}^{-s}$ for some $s\in [0,\f{3}{2})$,  we have the following algebraic decay estimates:
		\begin{equation}
	\label{h124}
	\|(\rho-\bar{\rho}, \mathbf{u}, \nabla\phi, \phi^2-1)(t)\|_{\dot{H}^{-s}}\leq C_0,
	\end{equation}
	\begin{equation}
	\label{hg124}
	\|\mathbf{u}(t)\|_{L^{2}}^2+\| \phi^2(t)-1\|_{L^2}\leq C_0(1+t)^{-s},
	\end{equation}
	and \begin{equation}
	\label{h125}
	\|\rho(t)-\bar{\rho} \|_{H^{3}}^2+\|\nabla\mathbf{u}(t)\|_{H^{2}}^2+\|\nabla \phi(t)\|_{H^{2}}^2\leq C_0(1+t)^{-(2+s)},
	\end{equation}  where $\dot{H}^{-s}$ denotes the homogeneous negative Sobolev space with periodic condition (see Definition \ref{def21}).
\end{theorem}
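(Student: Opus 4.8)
The plan is to combine a global-in-time energy method with the negative Sobolev norm framework of Guo--Wang. I would first reformulate the problem in perturbation variables adapted to the phase-separation state. Since mass is conserved, $\bbint_{\mathbb{T}^3}\rho\,d\mathbf{x}=\bar\rho$ for all time, so I set $\sigma=\rho-\bar\rho$, which has zero average and therefore obeys Poincar\'e's inequality $\|\sigma\|_{L^2}\lesssim\|\nabla\sigma\|_{L^2}$. The crucial structural point is that the reference state is not a single point but a manifold: $\phi$ is close to $+1$ on one bulk phase and to $-1$ on the other, so the natural small quantities are $\nabla\phi$ (a thin interface) and $\psi:=\phi^2-1$ (nearly pure phases), which is exactly why the norms in \eqref{g17} and \eqref{h18} are built from $\|\nabla\phi\|_{H^2}$ and $\|\phi^2-1\|_{L^2}$ rather than from $\phi$ minus a constant. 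Note that $\partial_\phi f=\phi^3-\phi=\phi\psi$ and $f(\phi)=\frac14\psi^2-\frac14$, so the double well is strictly convex at $\pm1$ (with $f''(\pm1)=2$); this spectral gap is what will ultimately drive the decay of $\psi$. Global existence then follows from the standard local existence theory (Kotschote--Zacher) together with a continuation argument, once the uniform a priori bound \eqref{h18} is established under the smallness hypothesis \eqref{g17}.

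The heart of the argument is the a priori estimate \eqref{h18}. I would start from the basic energy law, obtained by testing the momentum equation with $\mathbf{u}$, the mass equation with the pressure potential, and the phase equation with $\mu$; the capillary term $\nabla\phi\Delta\phi$ cancels against the Korteweg stress, and the dissipation that emerges is $\int 2\nu|\mathbf{D}(\mathbf{u})|^2+\lambda|\mathrm{div}\,\mathbf{u}|^2+|\nabla\mu|^2\,d\mathbf{x}$. Because $\rho\mu=-\Delta\phi+\rho\phi\psi$, the term $\|\nabla\mu\|^2$ and its higher-order analogues control the top derivatives of $\phi$. I would then apply $\nabla^k$ for $k=1,2,3$ and run weighted energy estimates, using Lemma \ref{lem22} and Lemma \ref{lem22-1} to handle the products with $\nu(\phi),\lambda(\phi)$ and the factor $1/\rho$ appearing in $\mu$, and Lemma \ref{lem21} to interpolate the nonlinear remainders so that they are absorbed by the dissipation times the small factor $\delta$. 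Two points require care: first, one must close a bootstrap for $\|\phi\|_{L^\infty}\lesssim1$ so that the composition estimates apply, which follows from $\psi$ and $\nabla\phi$ being small together with $\phi^2=1+\psi$; second, the density carries no direct dissipation, so the time-integral control $\int_0^t\|\sigma\|_{H^3}^2$ in \eqref{h18} must be recovered from the momentum equation via the cross-term $\int\nabla\sigma\cdot\mathbf{u}$, the coercivity being supplied by $\nabla p=p'(\rho)\nabla\sigma$ with $p'(\rho)>0$.

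For the decay I would use the negative Sobolev norm. The boundedness \eqref{h124} is proved by applying $\Lambda^{-s}$ to the perturbation system, pairing with $\Lambda^{-s}$ of the unknowns, and estimating the nonlinearities with Lemma \ref{lem25}: since the nonlinear terms $N$ are at worst quadratic, $\|\Lambda^{-s}N\|\lesssim\|N\|_{L^{1/(1/2+s/3)}}$, and the H\"older exponent $1/(1/2+s/3)$ exceeds $1$ precisely when $s<3/2$, which is the source of the restriction on $s$; the resulting $L^p$ norm of $N$ is then controlled by the energy bound \eqref{h18}, and integrating in time keeps $\|(\sigma,\mathbf{u},\nabla\phi,\psi)\|_{\dot H^{-s}}$ uniformly bounded. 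Finally, the rates \eqref{hg124}--\eqref{h125} come from interpolation: by Lemma \ref{lem24}, once $\|w\|_{\dot H^{-s}}\le C_0$ one has $\|\nabla^\ell w\|^{1+1/(\ell+s)}\lesssim\|\nabla^{\ell+1}w\|$. Feeding this into the family of order-$\ell$ energy inequalities $\frac{d}{dt}\mathcal E_\ell+c\,\mathcal D_\ell\le0$ turns each into a closed differential inequality $\frac{d}{dt}\mathcal E_\ell+c\,\mathcal E_\ell^{1+1/(\ell+s)}\le0$, whose solution decays like $(1+t)^{-(\ell+s)}$. Taking $\ell=0$ yields the $(1+t)^{-s}$ rate for the lowest-order quantities in \eqref{hg124}, while the faster $(1+t)^{-(2+s)}$ rate in \eqref{h125} comes from the order-$\ell=2$ energy, with the density controlled at the fast rate through Poincar\'e and the slaving of $\sigma$ to $\nabla\mathbf{u}$ via the mass equation.

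I expect the main obstacle to be the closure of the higher-order energy estimates in the second step. The difficulty is genuinely structural: the smallness of the phase is measured only through $\nabla\phi$ and $\phi^2-1$, so every composition and product estimate must be arranged so as never to require control of $\phi$ itself, while the fourth-order, density-weighted chemical potential couples all three unknowns at top order. Recovering the density dissipation, absorbing the capillary coupling $\nabla\phi\Delta\phi$ and the variable-coefficient viscous terms into the dissipation, and doing so uniformly in time with exactly the derivative counts needed to later match the decay rates, is where the bulk of the technical work lies.
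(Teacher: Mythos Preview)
Your proposal is correct and follows essentially the same architecture as the paper: perturbation variables $(\sigma,\mathbf{u},\nabla\phi,\phi^2-1)$, the basic energy law plus $\nabla^k$-level estimates, the cross term $\int\nabla^k\mathbf{u}\cdot\nabla^{k+1}\sigma$ to recover density dissipation, then the Guo--Wang negative Sobolev machinery with Lemma~\ref{lem25} and the interpolation Lemma~\ref{lem24} to close the decay inequalities at levels $\ell=0$ and $\ell=2$. Two implementation details the paper singles out that you leave implicit: on the torus the $\Lambda^{-s}$ nonlinear estimates require subtracting averages (e.g.\ replacing $\mathbf{u}$ by $\mathbf{u}-\bbint\mathbf{u}$) and invoking Poincar\'e before applying Lemma~\ref{lem25}, since the Gagliardo--Nirenberg route used in the $\mathbb{R}^3$ Cauchy problem fails here; and the $\dot H^{-s}$ bound for $\phi^2-1$ is obtained not from the $\phi$-equation directly but by first deriving an evolution equation for $\phi^2-1$ (multiply \eqref{h3280} by $2\phi$) and then running the $\Lambda^{-s}$ estimate on that.
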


By Lemma \ref{lem25}, we obtain that for $p\in(1,2]$, $L^p(\mathbb{T}^3)\subset \dot{H}^{-s}(\mathbb{T}^3)$ with $s=3\lf(\f{1}{p}-\f{1}{2}\rg)\in [0, \f{3}{2})$. Then by Theorem \ref{theo 2.1}, we have the following corollary of the usual $L^p-L^2$ type of decay results:
\begin{corollary}  \label{coro12}
	Under the assumptions of Theorem \ref{theo 2.1}, if
	$(\rho_0-\bar{\rho}, \mathbf{u}_0, \nabla\phi_0, \phi_0^2-1)\in L_{\mathrm{per}}^p(\mathbb{T}^3)$ for some $p\in(1,2]$, then the following decay results hold:
$$
	\label{h126}
	\|\mathbf{u}(t)\|_{L^{2}}^2+\| \phi^2(t)-1\|_{L^{2}}^2\lesssim  (1+t)^{-3\lf(\f{1}{p}-\f{1}{2}\rg)},
$$
	and $$
	\|\rho(t)-\bar{\rho} \|_{H^{3}}^2+\|\nabla\mathbf{u}(t)\|_{H^{2}}^2+\|\nabla \phi(t)\|_{H^{2}}^2\lesssim (1+t)^{-3+1-3\lf(\f{1}{p}-\f{1}{2}\rg)}.
$$
\end{corollary}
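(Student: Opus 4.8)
The plan is to obtain the corollary directly from Theorem \ref{theo 2.1}, by recognizing that the $L^p$-integrability hypothesis on the initial data is a special case of the negative-Sobolev hypothesis $(\rho_0-\bar\rho,\mathbf{u}_0,\nabla\phi_0,\phi_0^2-1)\in\dot{H}^{-s}$ used there, for the precise value $s=3\lf(\f{1}{p}-\f{1}{2}\rg)$. Once that identification is made, the two asserted decay rates are exactly the rates \eqref{hg124}, \eqref{h125} of Theorem \ref{theo 2.1} with this value of $s$ substituted.

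First I would establish the embedding $L^p(\mathbb{T}^3)\hookrightarrow\dot{H}^{-s}(\mathbb{T}^3)$ with $s=3\lf(\f{1}{p}-\f{1}{2}\rg)$. By Definition \ref{def21}, $\|f\|_{\dot{H}^{-s}}=\|\Lambda^{-s}f\|$. For $p\in(1,2)$ one checks $0<s<\f{3}{2}<3$, that $1<p<2<\infty$, and that $\f{1}{2}+\f{s}{3}=\f{1}{p}$, so Lemma \ref{lem25} applied with $q=2$ yields $\|\Lambda^{-s}f\|_{L^2}\lesssim\|f\|_{L^p}$, i.e. $\|f\|_{\dot{H}^{-s}}\lesssim\|f\|_{L^p}$; for the endpoint $p=2$ we have $s=0$ and $\dot{H}^0=L^2$, so the embedding is trivial. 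Applying this componentwise, the hypothesis $(\rho_0-\bar\rho,\mathbf{u}_0,\nabla\phi_0,\phi_0^2-1)\in L^p$ gives $(\rho_0-\bar\rho,\mathbf{u}_0,\nabla\phi_0,\phi_0^2-1)\in\dot{H}^{-s}$. Since $p\in(1,2]$ forces $\f{1}{p}\in[\f{1}{2},1)$, we have $s\in[0,\f{3}{2})$, which is exactly the admissible range in Theorem \ref{theo 2.1}; hence the estimates \eqref{h124}--\eqref{h125} hold for this $s$.

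Next I would simply read off the rates. The bound \eqref{h125} reads $\|\rho(t)-\bar\rho\|_{H^3}^2+\|\nabla\mathbf{u}(t)\|_{H^2}^2+\|\nabla\phi(t)\|_{H^2}^2\le C_0(1+t)^{-(2+s)}$, and since $-(2+s)=-3+1-3\lf(\f{1}{p}-\f{1}{2}\rg)$, this is precisely the second asserted estimate. Likewise \eqref{hg124} gives $\|\mathbf{u}(t)\|_{L^2}^2+\|\phi^2(t)-1\|_{L^2}\le C_0(1+t)^{-s}$, whose exponent matches $-3\lf(\f{1}{p}-\f{1}{2}\rg)$.

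The only point requiring a small extra argument, which I expect to be the main (minor) obstacle, is the mismatch in the power of $\|\phi^2-1\|_{L^2}$: Theorem \ref{theo 2.1} controls the first power in \eqref{hg124}, whereas the corollary asserts the square. To bridge this I would invoke the uniform energy bound \eqref{h18}, which together with the smallness \eqref{g17} gives $\sup_{t\ge0}\|\phi^2(t)-1\|_{L^2}\lesssim1$. Then $\|\phi^2(t)-1\|_{L^2}^2\le\lf(\sup_{\tau\ge0}\|\phi^2(\tau)-1\|_{L^2}\rg)\|\phi^2(t)-1\|_{L^2}\lesssim\|\phi^2(t)-1\|_{L^2}\le C_0(1+t)^{-s}$, which upgrades the first-power bound to the squared quantity claimed. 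Adding the $\mathbf{u}$-contribution from \eqref{hg124} completes the first estimate, and the corollary follows.
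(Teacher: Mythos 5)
Your proposal is correct and follows exactly the paper's own route: the paper derives the corollary in one line by noting via Lemma \ref{lem25} that $L^p(\mathbb{T}^3)\subset\dot{H}^{-s}(\mathbb{T}^3)$ with $s=3\lf(\f{1}{p}-\f{1}{2}\rg)\in[0,\f{3}{2})$ and then substituting this $s$ into \eqref{hg124}--\eqref{h125}, which is precisely your argument. Your extra step reconciling the first power of $\|\phi^2(t)-1\|_{L^2}$ in \eqref{hg124} with the square in the corollary (via the uniform bound from \eqref{h18}) is sound and in fact addresses what appears to be a typo in the theorem's statement, since the paper's Subsection 3.2 actually proves the squared version directly.
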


\begin{remark}
 The constraint
	$s<\f{3}{2} $ in Theorem \ref{theo 2.1}  comes from applying Lemma \ref{lem25}  to estimate the nonlinear terms when doing
	the negative Sobolev estimates via $\Lambda^{-s}$. This
	in turn restricts $p>1$ in Corollary \ref{coro12} by our method. Note that the nonlinear estimates would not work for $s\geq\f{3}{2}$.
\end{remark}
\begin{remark}
A particular feature of the system \eqref{h21} is that the viscosity coefficients $\nu, \lambda$ depend on the mass concentration
difference $\phi$, which makes our arguments to be difficult. In general, the viscosity coefficients $\nu, \lambda$ may depend not only on the mass concentration
difference $\phi$, but also the density $\rho$. Here, we'd like to emphasis that the result of Theorem \ref{theo 2.1} holds  by the same lines in this case. Moreover, when the dimension of the space is 2 dimensions, the conclusion of the theorem 1.1 still holds.
\end{remark}

\begin{remark}
The conclusion of the Theorem 1.1 shows that, in the small perturbation condition near the phase separation state, discontinuous phenomena such as shock wave, vacuum, interface rupture for compressible immiscible two-phase flow will not occur in finite time. Moreover,  velocity, density and phase field of immiscible two-phase flow decay algebraically with time. In particular, the interface of two-phase flow algebra decays to sharp interface limit.
\end{remark}



\begin{corollary}  \label{corollary 3}
	Assume that \eqref{h22}-\eqref{hhh}, $d$ represents the dimension of space,  and for an integer $N\geq3$,
$$
	\rho_0, \mathbf{u}_0, \phi_0\in
	H^{N}(\mathbb{T}^d),\quad \inf_{x\in \mathbb{T}^d }\rho_0(x)>0.
$$
	So, there exists a  positive constant  $\delta_0>0$  such that if
	\begin{equation}\label{g17}
	\|\rho_0-\bar{\rho}\|_{H^{N}}+\|\mathbf{u}_0\|_{H^{N}}+\|\nabla\phi_0\|_{H^{N-1}}+\|\phi_0^2-1\|\leq
	\delta_0,
	\end{equation} then
	the system \eqref{h21} admits a unique solution $(\rho,
	\mathbf{u}, \phi)$ on $[0,\infty)$ satisfying
	$$
	\begin{aligned}
	&\rho, \mathbf{u}, \phi \in C([0,\infty);H^{N}(\mathbb{T}^d)),\,\,\,\rho \in L^2(0,\infty;H^{N}(\mathbb{T}^d)),\\
	&\nabla \mathbf{u}\in L^2(0,\infty;H^{N}(\mathbb{T}^d)),\,\,\,\nabla\phi \in L^2(0,\infty;H^{N+1}(\mathbb{T}^d)).
	\end{aligned}$$
	and
	\begin{equation}\label{h18*}
	\begin{aligned}
	&\|(\rho-\bar{\rho}, \mathbf{u})(t)\|^2_{H^{N}}+\|\nabla\phi(t)\|^2_{H^{N-1}}+\|\phi^2(t)-1\|^2
	\\
	&+\int_0^t \| \rho(\tau)-\bar{\rho}\|^2_{H^{N}}d\tau+\int_0^t \|\nabla \mathbf{u}(\tau)\|^2_{H^{N}}d\tau+\int_0^t \|\nabla\phi(\tau)\|^2_{H^{N+1}}d\tau\\
	&\leq  C
	\lf(\|(\rho_0-\bar{\rho}, \mathbf{u}_0)\|^2_{H^{N}}+\|\nabla\phi_0\|^2_{H^{N-1}}+\|\phi^2_0-1\|^2\rg),\end{aligned}\end{equation} for any $t\geq 0,$ where $C$ is the positive constant independent of
	$x, t$ and  $ \eta_0$.
		Moreover, we have the following algebraic decay estimates:
if  $(\rho_0-\bar{\rho}, \mathbf{u}_0, \nabla\phi_0, \phi_0^2-1)\in \dot{H}^{-s}_{per}(\mathbb{T}^d)$ for some $s\in [0,\f{d}{2})$, then both \eqref{h124} and \eqref{hg124} still hold,
	and \begin{equation}
	\label{h125*}
	\|\rho(t)-\bar{\rho} \|_{H^{N}}^2+\|\nabla\mathbf{u}(t)\|_{H^{N-1}}^2+\|\nabla \phi(t)\|_{H^{N-1}}^2\leq C_0(1+t)^{-(N-1+s)}.
	\end{equation}
\end{corollary}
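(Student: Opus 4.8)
The plan is to establish Corollary \ref{corollary 3} by repeating the proof of Theorem \ref{theo 2.1} essentially verbatim, with the two structural parameters changed from the values used there, namely the differentiation order from $3$ to the general integer $N\geq 3$ and the space dimension from $3$ to $d$. Accordingly I would organize the argument into the same two blocks. The first block produces a global strong solution together with the uniform energy bound \eqref{h18*}: it combines a standard local existence and uniqueness theorem (parabolic smoothing for the fourth-order phase equation, coupled with the hyperbolic--parabolic Navier--Stokes part) with a continuation argument powered by uniform a priori estimates. The second block produces the decay statements \eqref{h124}, \eqref{hg124} and \eqref{h125*} by the negative Sobolev space method, that is, combining a uniform-in-time bound in $\dot H^{-s}$ with the energy dissipation through the interpolation inequality of Lemma \ref{lem24}.

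For the a priori estimates I would work with the perturbation unknowns $\sigma=\rho-\bar\rho$, $\mathbf{u}$, $\nabla\phi$ and $\psi=\phi^2-1$, the last choice being dictated by the fact that the phase separation state is the zero set of $\phi^2-1$, where $\partial_\phi f(\phi)=\phi(\phi^2-1)$ vanishes. First I would record the basic energy identity arising from the physical free energy of the Navier--Stokes/Cahn--Hilliard structure, which yields the viscous dissipation $\nu_0\|\nabla\mathbf{u}\|^2$ and the Cahn--Hilliard dissipation $\|\nabla\mu\|^2$; via the constitutive relation $\rho\mu=-\Delta\phi+\rho\phi\psi$ the latter controls $\nabla\phi$ in the higher norms appearing in the dissipation functional. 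Then I would apply $\nabla^{k}$ for $1\leq k\leq N$ to each equation, pair against the matching quantity, and sum. The commutators generated by the convection terms, by the $\phi$-dependent viscosities $\nu(\phi),\lambda(\phi)$ and by the pressure $p(\rho)$ are all of higher order and must be absorbed; this is done through the composition estimate of Lemma \ref{lem22-1} and the Moser-type inequality of Lemma \ref{lem22}, which require $N>\frac{d}{2}$ so that $H^{N}\hookrightarrow L^\infty$ and the product laws hold, and which close precisely because the smallness $\delta_0$ keeps $\|\sigma\|_{L^\infty}$ small (in particular away from vacuum). Summing the estimates for $k=0,\dots,N$ and absorbing the cubic and quartic remainders yields a dissipation inequality $\frac{d}{dt}\mathcal{E}+c\,\mathcal{D}\leq 0$ with $\mathcal{E}\sim\|(\sigma,\mathbf{u})\|_{H^{N}}^2+\|\nabla\phi\|_{H^{N-1}}^2+\|\psi\|^2$ and $\mathcal{D}\sim\|\sigma\|_{H^{N}}^2+\|\nabla\mathbf{u}\|_{H^{N}}^2+\|\nabla\phi\|_{H^{N+1}}^2$, whose integration gives \eqref{h18*}.

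For the decay I would first close the negative Sobolev estimate. Applying $\Lambda^{-s}$ to the equations for $(\sigma,\mathbf{u},\nabla\phi,\psi)$ and pairing with $\Lambda^{-s}$ of the same unknowns, the linear contributions are dominated by $\mathcal{E}$ and $\mathcal{D}$, while each nonlinear term is handled by first invoking Lemma \ref{lem25} to move $\Lambda^{-s}$ onto a suitable $L^p$ factor and then using Sobolev embedding; it is exactly here that the range $s\in[0,\frac{d}{2})$ is forced, since Lemma \ref{lem25} demands $0<s<d$ together with an admissible exponent pair. Integrating in time and using \eqref{h18*} produces the uniform bound \eqref{h124}. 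With $\|(\sigma,\mathbf{u},\nabla\phi,\psi)\|_{\dot H^{-s}}\leq C_0$ in hand, I would run the interpolation of Lemma \ref{lem24}: at the lowest order the dissipation does not control $\|\mathbf{u}\|_{L^2}$ or $\|\psi\|$, so these two remain the slow modes and decay only at the base rate $(1+t)^{-s}$, giving \eqref{hg124}; all quantities that do enter $\mathcal{D}$, namely the full density $\sigma$ and the gradients of $\mathbf{u}$ and $\phi$, are bootstrapped against the uniform $\dot H^{-s}$ bound to obtain the faster rate $(1+t)^{-(N-1+s)}$ of \eqref{h125*}, most transparently through a time-weighted energy iteration that raises the decay exponent by one unit at each of the $N-1$ steps.

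The main obstacle, as already in Theorem \ref{theo 2.1}, is the top-order ($k=N$) estimate of the coupling between the fourth-order Cahn--Hilliard equation and the compressible flow through the variable viscosities and the capillary stress $\nabla\phi\,\Delta\phi$: the resulting commutators have no favourable sign, and one must use the relation $\rho\mu=-\Delta\phi+\rho\phi\psi$ to trade the highest derivative $\nabla^{N+1}\phi$ for the Cahn--Hilliard dissipation before the Moser estimates can be applied. A secondary difficulty, specific to the decay half, is to justify the gap in \eqref{h125*} between the slow $L^2$ modes $\mathbf{u},\psi$ and the fast gradient and density modes; this requires the refined interpolation/time-weighted bootstrap described above rather than the basic energy inequality alone. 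Finally one must check that the dimension enters only through the two stated thresholds $N>\frac{d}{2}$ (Lemmas \ref{lem21}--\ref{lem22}) and $s<\frac{d}{2}$ (Lemma \ref{lem25}), both of which are already built into the hypotheses of the corollary, so that no estimate degrades as $d$ varies.
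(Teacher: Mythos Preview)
Your plan is correct and matches the paper's implicit argument: the paper states Corollary \ref{corollary 3} without a separate proof, relying on the fact that every step in the proof of Theorem \ref{theo 2.1} (the energy hierarchy of Lemmas \ref{lem31}--\ref{lem35}, the negative Sobolev estimates of Lemmas \ref{lem41}--\ref{lem42}, and the closing argument of Section 3) carries over when the top derivative index $3$ is replaced by $N$ and the ambient dimension $3$ by $d$, the only dimensional constraints being the Sobolev embedding $H^{N}\hookrightarrow L^\infty$ (needing $N>d/2$) and the Hardy--Littlewood--Sobolev range $s<d/2$ in Lemma \ref{lem25}.

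One small methodological difference: for the fast decay \eqref{h125*} the paper does \emph{not} run a time-weighted bootstrap. Instead it works at the single level $l=N-1$, $m=N$, combines the $\dot H^{-s}$ bound with Lemma \ref{lem24} to obtain the nonlinear lower bound $G_{l}^{m}\gtrsim (\mathcal{F}_{l}^{m})^{1+1/(l+s)}$ (see \eqref{hhg513}), and then solves the resulting ODE $\tfrac{d}{dt}\mathcal{F}_{l}^{m}+c(\mathcal{F}_{l}^{m})^{1+1/(l+s)}\leq 0$ directly via \eqref{h119} to get $(1+t)^{-(N-1+s)}$. Your iterative time-weighted scheme would also work, but the ODE route is shorter and is the one actually used. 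Likewise, at the top phase-field estimate the paper does not pass through $\mu$ explicitly but applies $\nabla^{k}$ to \eqref{h3280} and pairs with $-\nabla^{k}\Delta\phi$, so that the dissipation appears as $\int|\nabla(\nabla^{k}\Delta\phi/\rho)|^{2}$ (Lemma \ref{lem33}); this is equivalent to your $\mu$-based reformulation.
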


 To prove Theorem \ref{theo 2.1}, we will use the energy method  developed by \cite{GW 11}, which relies essentially on  the following  two main steps:

Step 1. Energy estimates at $l-$th level.
\begin{equation}
\label{h117}
\f{d}{dt}\mathcal{E}_l^m(t)+\|\nabla^l (\rho-\bar{\rho})(t)\|^2_{H^{m-l}}+ \|\nabla^{l+1}\mathbf{u}(t)\|^2_{H^{m-l}}+ \|\nabla^{l+1} \phi(t)\|^2_{H^{m+1-l}}\leq 0,
\end{equation}for any $0\leq l< m\leq 3$, where
$$\mathcal{E}_l^m(t) \backsimeq \|\nabla^l(\rho-\bar{\rho}, \mathbf{u})(t)\|^2_{H^{m-l}}+\|\nabla^{l+1} \phi(t)\|^2_{H^{m-1-l}}+\|\phi^2-1\|_{L^2}^2,$$ where $A\backsimeq   B$
means $CA\leq B\leq \f{1}{C}A$
for a generic constant $C>0.$

Step 2. Negative Sobolev norm estimate.
\begin{equation}
\label{h118}
\f{d}{dt}\mathcal{E}_{-s}(t)\lesssim \lf(\|\nabla(\rho,  \mathbf{u})\|_{H^2}^2+\|\nabla\phi\|_{H^2}\rg)\mathcal{E}_{-s}(t),\ \ \ 0 < s\leq\f{3}{2},\end{equation} where
$$\mathcal{E}_{-s}=\|\Lambda^{-s} (\rho-\bar{\rho})\|_{L^2}^2+\|\Lambda^{-s} \mathbf{u}\|_{L^{2}}^2+\|\Lambda^{-s} \nabla\phi\|_{L^2}^2+\|\Lambda^{-s} (\phi^2-1)\|_{L^2}^2. $$

If we prove \eqref{h117}, then it is easy to show  that there exits a  solution of the system \eqref{h21} satisfying \eqref{h18} by the continuation argument of local solution (Subsection 3.1).
Also, by using \eqref{h117} and the  Poincar\'e inequality
\begin{equation}\label{h45}{\Big \|}w-\bbint_{\mathbb{T}^3}w(\mathbf{x})d\mathbf{x}{\Big \|}_{L^p}\lesssim \|\nabla w\|_{L^p},\quad  \text{for all}\,\,\, w\in W_{\mathrm{per}}^{1,p}(\mathbb{T}^3),\,\, 1<p<\infty, \end{equation}
combining \eqref{h118},
Lemma \ref{lem24},  the  differential inequality
\begin{equation}
\label{h119}\f{df(t)}{dt}+c_0(f(t))^{1+\f{1}{l+s}}\leq 0\Rightarrow f(t)\leq \lf(f(0)^{-\f{1}{l+s}}+\f{c_0t}{l+s}\rg)^{-(l+s)}\lesssim (1+t)^{-(l+s)},\end{equation} and
 \begin{equation}\label{hr317}\|\nabla^{k}w\|_{L^2}\lesssim \|\nabla^{k+1}w\|_{L^{2}},\,\, k=1,2, \quad  \text{for all}\,\,\,w\in H^{3}_{\mathrm{per}}(\mathbb{T}^3), \end{equation}
we can get the algebraic decay \eqref{hg124} and \eqref{h125} (Subsection 3.2). Therefore, the estimates \eqref{h117} and \eqref{h118} are essential in the proof of Theorem \ref{theo 2.1}.

Here, we briefly review some  difficulties and key analytical techniques in deriving \eqref{h117} and \eqref{h118}, compared with previous works  in \cite{W 12, TZ 11, GTY 16}.
The main difficulty comes from the Cahn-Hillard  equation $\eqref{h21}_{3,4}$, rewritten it as a fourth order nonlinear PDE
\begin{equation}
\label{h120}
\phi_t+\f{1}{\rho}\Delta \lf(\f{\Delta\phi}{\rho}\rg)- \f{2}{\rho}\Delta\phi+ \f{3}{\rho}\lf(1-\phi^2\rg)\Delta\phi=-\mathbf{u}\cdot\nabla\phi+\f{6\phi}{\rho}(\nabla\phi)^2,\end{equation}where  the strong nonlinear term $\lf(1-\phi^2\rg)\Delta\phi$ makes a trouble for desired estimates  because both of  $\|\phi(t)\|_{L^2}$
and $\|\phi(t)\|_{L^\infty}$ are not small.
 On the other hand, the coupling between the Navier-Stokes equations $\eqref{h21}_{1,2}$ and the Cahn-Hilliard equation  \eqref{h120} also bring trouble to us.
In order to overcome this difficulties, we first
find that $\|\phi^2-1\|_{L^2}$ is small for small initial data and then $\|\phi(t)\|_{L^\infty}$ is bounded (Lemma \ref{lem31}). This makes for us to assume that not only $\| (\sigma,\mathbf{u})\|_{H^{3}}+\| \nabla\phi\|_{H^{2}}$ is small, but also  $\|\phi^2-1\|_{L^2}$ is small in a priori estimates. Based on these facts, we complete the estimate \eqref{hr329} for $\nabla\phi$ at $k-$th level, where a new $L^p-$estimate \eqref{hg21}  for nonlinear functions is used essentially together with Gagliardo-Nirenberg inequality \eqref{hg20}  (Lemma \ref{lem33}). Next, we derive the estimate \eqref{hr316} and \eqref{hr326}  for $(\rho-\bar{\rho}, \mathbf{u})$ at  $k-$th level, where the more delicate estimates are needed because of the dependence on $\phi$ of viscosity coefficients $\nu$ and  $\lambda$ (Lemma \ref{lem32} and Lemma \ref{lem35}). Combining Lemmas  \ref{lem31}-\ref{lem35}, we get \eqref{h117} (see Subsection 3.1).
Another main point in this paper is how to get the negative Sobolev norm estimate \eqref{h118} in the case of periodic boundary problem, which is quite different with Cauchy problem in $\mathbb{R}^3$ of previous works \cite{W 12, TZ 11, GTY 16} due to they are based essentially on the following inequality
$$\|w\|_{L^{\f{3}{s}}}\lesssim \begin{cases}
\displaystyle\|\nabla w\|^{\f{1}{2}+s}\|\nabla^2 w\|^{\f{1}{2}-s}, \quad s\in (0,\f{1}{2}],\\
\displaystyle\| w\|^{s-\f{1}{2}}\|\nabla w\|^{\f{3}{2}-s}, \qquad s\in (\f{1}{2},\f{3}{2}),
\end{cases}$$which does not hold for periodic boundary problem.
 Therefore, in order to estimate  $\|\Lambda^{-s} (\rho-\bar{\rho}, \mathbf{u}, \nabla\phi)\|^2$, we must rely  on Lemma \ref{lem25} and the  Poincar\'e inequality  \eqref{h45} (see Lemma \ref{lem41}). For example, we have
$$
\begin{aligned}
&\int_{\mathbb{T}^3} \Lambda^{-s}\lf[(\mathbf{u}, \nabla)\mathbf{u} \cdot \rg]\Lambda^{-s}\mathbf{u} d\mathbf{x}=\int_{\mathbb{T}^3} (\mathbf{u}', \nabla)\Lambda^{-s} \mathbf{u}\cdot\Lambda^{-s}\mathbf{u} d\mathbf{x}\\
&+\int_{\mathbb{T}^3} \Lambda^{-s}\lf[(\mathbf{u}-\mathbf{u}', \nabla) \mathbf{u}\rg]\cdot\Lambda^{-s}\mathbf{u} d\mathbf{x}=\int_{\mathbb{T}^3} \Lambda^{-s}\lf[(\mathbf{u}-\mathbf{u}', \nabla) \mathbf{u}\rg]\cdot\Lambda^{-s}\mathbf{u} d\mathbf{x}\\&\overset{\eqref{hg25}}\lesssim \|(\mathbf{u}-\mathbf{u}',\nabla)\mathbf{u}\|_{L^{\f{1}{\f{1}{2}+\f{s}{3}}}}\|\Lambda^{-s}\mathbf{u}\|\lesssim \|\mathbf{u}-\mathbf{u}'\|_{L^\f{3}{s}}\|\nabla\mathbf{u}\|\|\Lambda^{-s}\mathbf{u}\|
\\&\overset{\eqref{h45}}\lesssim \|\nabla\mathbf{u}\|_{L^\f{3}{s}}\|\nabla\mathbf{u}\| \|\Lambda^{-s}\mathbf{u}\|
\lesssim \|\nabla\mathbf{u}\|_{H^2}^2 \|\Lambda^{-s}\mathbf{u}\|,
\end{aligned}
$$where
$\displaystyle\mathbf{u}'=\bbint_{\mathbb{T}^3}\mathbf{u} d\mathbf{x}$.
Also, for the estimate on $\|\Lambda^{-s} (\phi^2-1)\|^2$,
multiplying  \eqref{h120} by $2\phi$ and applying $\Lambda^{-s}$  to the resulting equality \eqref{hr330}, we could obtain the following type of inequality
$$\f{d}{dt}\|\Lambda^{-s} \lf(\phi^2-1\rg)\|^2+\|\Lambda^{-s} \nabla\lf(\phi^2-1\rg)\|^2+\|\Lambda^{-s} \Delta\lf(\phi^2-1\rg)\|^2\lesssim \cdots \cdots, $$which is another key point (see Lemma \ref{lem42}).


\section{The local existence and some energy estimates}
\setcounter{equation}{0}
In this section,  a priori estimate   for \eqref{h21} are established. First, we will  give  the solution space.  For any interval $I\subset[0,\infty)$, and $\forall M>0$,   we suppose  that
$(\sigma, \mathbf{u}, \phi)\in X_{M}(I)$ is the solution to the system
\eqref{h21}, where the solution space $X_{M}(I)$  is defined as follows
\begin{equation}\label{h310}
\begin{aligned}
X_{M}(I)&\overset{\mathrm{def}}=\Big\{(\sigma,\mathbf{u}, \phi)\Big|\,(\sigma,\mathbf{u})\in C(\emph{I};H_{\mathrm{per}}^3(\mathbb{T}^3)),\,\,\,\nabla\phi \in C(I;H_{\mathrm{per}}^2(\mathbb{T}^3)),\\
&\qquad \phi^2-1\in C(I;L_{\mathrm{per}}^{2}(\mathbb{T}^3)),\,\,\,\nabla\sigma \in L^2(I;H_{\mathrm{per}}^2(\mathbb{T}^3)),
\\
&\qquad\nabla\phi \in L^2(I;H_{\mathrm{per}}^4(\mathbb{T}^3)),\,\,\,\nabla \mathbf{u}\in L^2(I;H_{\mathrm{per}}^3(\mathbb{T}^3)),\\
&\quad\sup_{ t \in I}\lf(\| (\sigma,\mathbf{u})\|_{H^{3}}+\| \nabla\phi\|_{H^{2}}+\|\phi^2(t)-1\|\rg)\leq M,\ \inf_{\mathbf{x}\in \mathbb{T}^3, t\in I}\rho(\mathbf{x},t)>0
\Big\}.
\end{aligned}\end{equation}
Now the local existence for \eqref{h21} is established below. The proof process is classical, and it can be easily obtained by using traditional methods such as linearization techniques for equations and fixed point theorems. We will not give the details.
\begin{proposition}  \label{pro311}
	\textbf{(local existence). } Suppose that \eqref{h22}-\eqref{hhh} and \eqref{A1} are satisfied, $\displaystyle\bar{\rho}=\bbint_{\mathbb{T}^3}\rho_0(\mathbf{x})d\mathbf{x}$. Let  $\|(\rho_0-\bar{\rho}, \mathbf{u}_0)\|_{H^{3}} +\| \nabla\phi_0\|_{H^{2}}+\|\phi_0^2-1\|\leq M$, $\displaystyle\inf\limits_{\mathbf{x}\in \mathbb{T}^3 }\rho_0(\mathbf{x})>0$, then there exists $T^*$ small enough, such that, there exists a unique solution $(\rho,\mathbf{u}, \phi)\in X_{2M}\big([0,T^*]\big)$ to the system \eqref{h21}, satisfying
\begin{eqnarray}
  &&(\rho, \mathbf{u},\phi) \in C([0,T^*];H^3(\mathbb{T}^3)), \rho\in L^2([0,T^*];H^{3}(\mathbb{T}^3)), \notag\\
  &&\nabla\phi \in L^2([0,T^*];H^4(\mathbb{T}^3)),\ \nabla \mathbf{u}\in L^2([0,T^*];H^3(\mathbb{T}^3)).
\end{eqnarray}
\end{proposition}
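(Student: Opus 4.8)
The plan is to construct the solution by a classical linearized iteration scheme followed by a contraction/compactness argument, closing uniform estimates on a short time interval. Starting from the data, I would set $(\rho^0,\mathbf{u}^0,\phi^0)=(\rho_0,\mathbf{u}_0,\phi_0)$ extended constantly in time and, given the $n$-th iterate, define the $(n{+}1)$-th by solving three decoupled linear problems: (i) the continuity equation $\rho^{n+1}_t+{\rm div}(\rho^{n+1}\mathbf{u}^n)=0$ as a linear transport equation driven by the known field $\mathbf{u}^n$; (ii) the momentum equation with coefficients $\nu(\phi^n),\lambda(\phi^n)$ and density $\rho^{n+1}$ frozen, which is a linear second-order parabolic system for $\mathbf{u}^{n+1}$; and (iii) the Cahn--Hilliard equation in the form \eqref{h120}, with $\rho^{n+1}$ and $\mathbf{u}^n$ treated as known coefficients, a linear fourth-order parabolic equation for $\phi^{n+1}$. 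In the periodic setting each linear problem is solvable by Galerkin approximation or analytic semigroup theory.

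The first task is to propagate the positive lower bound of the density. Since $\rho^{n+1}$ is transported, the characteristic formula gives $\inf_{\mathbf{x}}\rho^{n+1}(\cdot,t)\geq(\inf_{\mathbf{x}}\rho_0)\exp\lf(-\int_0^t\|{\rm div}\,\mathbf{u}^n\|_{L^\infty}d\tau\rg)>0$ as long as $T^*$ is small and $\|\mathbf{u}^n\|_{H^3}$ stays bounded; this keeps $1/\rho^{n+1}$ a bounded $H^3$ coefficient, which is indispensable because it appears in the chemical-potential and fourth-order terms of \eqref{h120}. Using the Moser-type and Gagliardo--Nirenberg inequalities (Lemmas \ref{lem21}, \ref{lem22-1}, \ref{lem22}), I would then derive a differential inequality $\f{d}{dt}\mathcal{N}^{n+1}\leq C\,P(\mathcal{N}^{n+1},\mathcal{N}^{n})$ for the natural energy $\mathcal{N}^{n+1}=\|(\rho^{n+1}-\bar{\rho},\mathbf{u}^{n+1})\|_{H^3}^2+\|\nabla\phi^{n+1}\|_{H^2}^2+\|(\phi^{n+1})^2-1\|_{L^2}^2$, together with the parabolic dissipation for $\mathbf{u}^{n+1}$ and $\phi^{n+1}$. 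As in the a priori analysis, the quantity $\|(\phi^{n+1})^2-1\|_{L^2}$ (not $\|\phi^{n+1}\|$, which need not be small) is the right object, and smallness of $\|\phi^2-1\|_{L^2}$ is used to bound $\|\phi\|_{L^\infty}$ in the spirit of Lemma \ref{lem31}. A continuation argument then shows that, for $T^*$ small depending only on $M$, every iterate stays in $X_{2M}([0,T^*])$ uniformly in $n$, with the density lower bound preserved.

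With these uniform bounds, I would estimate the differences $(\rho^{n+1}-\rho^n,\mathbf{u}^{n+1}-\mathbf{u}^n,\phi^{n+1}-\phi^n)$ in a norm one derivative weaker (for instance $L^2\times L^2\times H^1$) to avoid derivative loss; the uniform $X_{2M}$ bounds render the coefficient differences controllable and yield a factor $\leq\tfrac12$ after shrinking $T^*$, so the sequence is Cauchy in the weaker space. Interpolating against the uniform high-order bounds upgrades the convergence, and weak-$*$ limits recover the full regularity asserted in $X_{2M}([0,T^*])$; the limit solves \eqref{h21}, and uniqueness follows from the same difference estimate applied to two solutions.

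I expect the main obstacle to be the fourth-order Cahn--Hilliard equation \eqref{h120} with the variable, density-dependent leading operator $\f1\rho\Delta\lf(\f{\Delta\phi}\rho\rg)$: one must establish well-posedness and the correct parabolic (smoothing) estimates for this fourth-order operator with a merely $H^3$ coefficient $1/\rho$, while keeping the strongly nonlinear terms $(1-\phi^2)\Delta\phi$ and $\phi|\nabla\phi|^2$ under control \emph{without} smallness of $\|\phi\|_{L^\infty}$. Coupling this parabolic gain for $\phi$ with the purely transported density, which provides no smoothing and forces an energy rather than a single-space fixed-point argument, is the delicate balance; since $T^*$ is short, all nonlinear contributions are ultimately absorbed into factors of the form $C T^*$.
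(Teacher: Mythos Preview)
Your proposal is correct and follows precisely the route the paper indicates: the paper does not actually give a proof of Proposition~\ref{pro311}, stating only that ``the proof process is classical, and it can be easily obtained by using traditional methods such as linearization techniques for equations and fixed point theorems,'' and your iteration/contraction scheme is exactly such a method, fleshed out in considerably more detail than the paper provides.
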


\vspace{3ex}On the basis of the existence of local solutions, We will now extend the local solution to the global solution.  For this purpose, we need to give some related energy inequalities.
Setting
\begin{equation}\label{sigma}
  \sigma=\rho-\bar{\rho},
\end{equation}
we rewrite $\eqref{h21}_{1}-\eqref{h21}_{2}$ and $\eqref{h21}_{3,4}$  as following
\begin{equation}\label{h313}\begin{cases}
\begin{aligned}
&\sigma_t+\bar{\rho}{\rm div} \mathbf{u}=g_1,\\
&\mathbf{u}_t-\f{2}{\bar{\rho}}{\rm div}\lf[\nu(\phi)\mathbf{D}(\mathbf{u})\rg]-\f{1}{\bar{\rho}}\nabla\lf[(\nu(\phi)+\lambda(\phi)) {\rm div}\mathbf{u}\rg]+\f{p'(\bar{\rho})}{\bar{\rho}}\nabla\sigma+\frac{1}{\bar\rho}\nabla\phi\Delta\phi=\mathbf{g}_2,\end{aligned}
\end{cases}
\end{equation}
and
\begin{equation}\label{h3280} \phi_t+ \mathbf{u}\cdot\nabla\phi=-\f{1}{\rho}\Delta \lf(\f{\Delta\phi}{\rho}\rg)+ \f{3\phi^2-1}{\rho}\Delta\phi+\f{6\phi}{\rho}(\nabla\phi)^2,\end{equation}
where
\begin{equation}\label{h314}g_1=-{\rm div}(\sigma\mathbf{u}),
\end{equation}
\begin{equation}\label{h315}\begin{aligned}
\displaystyle\mathbf{g}_2=&\displaystyle-
(\mathbf{u},\nabla)\mathbf{u}+h_1(\sigma)\nabla\sigma-2h_2(\sigma){\rm div}\lf[\nu(\phi)\mathbf{D}(\mathbf{u})\rg]\\
&\displaystyle-h_2(\sigma)\nabla\lf[(\nu(\phi)+\lambda(\phi)) {\rm div}\mathbf{u}-\nabla\phi\Delta\phi\rg],
\end{aligned}
\end{equation}
and
\begin{equation*}\label{h1h2}
   h_1(\sigma)=\f{p'(\bar{\rho})}{\bar{\rho}}-\f{p'(\rho)}{\rho},\quad
h_2(\sigma)=\f{1}{\bar{\rho}}-\f{1}{\rho}.
\end{equation*}
 By using the solution space \eqref{h310}, combining with the Sobolev embedding theorem, we can choose $M_0>0$, such that, $\forall0<M<M_0$,
\begin{equation}\label{hg313}
0<\f{\bar{\rho}}{2}\leq\rho(\mathbf{x},t)\leq 2\bar{\rho}.
\end{equation}

We first give the following energy estimate:
\begin{lemma}\label{lem31}  Under the  assumption \eqref{h310}, it holds   that
\begin{equation}\label{h38} \f{d}{dt}\int_{\mathbb{T}^3}\lf(\f{\rho}{2}\mathbf{u}^2+G(\rho)+\frac{1}{2}|\nabla
\phi|^2+\frac{\rho}{4}(\phi^2-1)^2\rg)d\mathbf{x} +\f{\nu_0}{2}\|\nabla\mathbf{u}\|^2+\|\nabla
\mu \|^2\leq 0,
\end{equation}	
	\begin{equation}
	\label{hr33}
	\|\phi(t)\|\lesssim 1\quad \text{and}\quad
\|\phi(t)\|_{L^\infty}\lesssim 1,	\end{equation} where \begin{equation}\label{h25}
G(\rho)=\rho\int_{\bar{\rho}}^\rho\f{p(z)-p(\bar{\rho})}{z^2}dz, \quad\rho>0.
\end{equation}
\end{lemma}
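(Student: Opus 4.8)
The plan is to obtain \eqref{h38} as an exact energy identity by testing the evolution equations in \eqref{h21} against their natural multipliers $\mathbf{u}$ and $\mu$, and then to read off \eqref{hr33} from the resulting uniform bound together with the regularity encoded in the solution space $X_M(I)$. First I would multiply the momentum equation $\eqref{h21}_2$ by $\mathbf{u}$ and integrate over $\mathbb{T}^3$. By the continuity equation $\eqref{h21}_1$ the inertial terms assemble into $\f{d}{dt}\int_{\mathbb{T}^3}\f{\rho}{2}|\mathbf{u}|^2\,d\mathbf{x}$, the transport part cancelling after one integration by parts. For the pressure, since $\bar\rho$ is constant and $G$ in \eqref{h25} satisfies $\rho G'(\rho)-G(\rho)=p(\rho)-p(\bar\rho)$, the transport identity $\p_tG(\rho)+{\rm div}(G(\rho)\mathbf{u})=-(\rho G'(\rho)-G(\rho)){\rm div}\mathbf{u}$ yields $\int\nabla p(\rho)\cdot\mathbf{u}\,d\mathbf{x}=\f{d}{dt}\int G(\rho)\,d\mathbf{x}$. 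The viscous terms integrate by parts to $-2\int\nu(\phi)|\mathbf{D}(\mathbf{u})|^2-\int\lambda(\phi)({\rm div}\mathbf{u})^2$, and the torus identity $\int|\mathbf{D}(\mathbf{u})|^2=\f12\|\nabla\mathbf{u}\|^2+\f12\|{\rm div}\mathbf{u}\|^2$ with \eqref{h22} bounds this above by $-\nu_0\|\nabla\mathbf{u}\|^2$.

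Next I would track the interface energy $\int_{\mathbb{T}^3}(\f12|\nabla\phi|^2+\f{\rho}{4}(\phi^2-1)^2)\,d\mathbf{x}$. Differentiating in time and using $\eqref{h21}_{3,4}$ together with $\f14(\phi^2-1)^2=f(\phi)+\f14$ (for $f$ as in \eqref{hhh}) and $f'(\phi)=\phi^3-\phi$, the gradient and polynomial contributions regroup through $-\Delta\phi+\rho(\phi^3-\phi)=\rho\mu$ into $\int\rho\mu\,\phi_t\,d\mathbf{x}$ plus $\int\f{\rho_t}{4}(\phi^2-1)^2\,d\mathbf{x}$. Substituting $\rho\phi_t=\Delta\mu-\rho\mathbf{u}\cdot\nabla\phi$ then produces $-\|\nabla\mu\|^2$ together with the convective remainder $-\int\rho\mu\,\mathbf{u}\cdot\nabla\phi\,d\mathbf{x}$.

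The crux, and the step I expect to be the main obstacle, is the exact cancellation of the capillary and convective cross terms when the two identities are added. The surviving uncontrolled contributions are $-\int\nabla\phi\,\Delta\phi\cdot\mathbf{u}$ from the momentum test, $-\int\rho\mu\,\mathbf{u}\cdot\nabla\phi$ from the phase test, and $\int\f{\rho_t}{4}(\phi^2-1)^2$. Inserting $\rho\mu=-\Delta\phi+\rho(\phi^3-\phi)$, the two $\Delta\phi$-pieces cancel identically; what remains is $-\int\rho(\phi^3-\phi)\mathbf{u}\cdot\nabla\phi=-\f14\int\rho\mathbf{u}\cdot\nabla((\phi^2-1)^2)$, and integrating by parts with $\eqref{h21}_1$ turns this into $-\f14\int\rho_t(\phi^2-1)^2$, which exactly cancels the remaining density term. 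What is left is the pure dissipation $-2\int\nu(\phi)|\mathbf{D}(\mathbf{u})|^2-\int\lambda(\phi)({\rm div}\mathbf{u})^2-\|\nabla\mu\|^2$, giving \eqref{h38} with room to spare over the stated constant $\f{\nu_0}{2}$.

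Finally, for \eqref{hr33} I would integrate \eqref{h38} in time so that the total energy is nonincreasing; since $\rho\ge\f{\bar\rho}{2}$ by \eqref{hg313}, the term $\int\f{\rho}{4}(\phi^2-1)^2$ controls $\|\phi^2-1\|_{L^2}\lesssim1$, and then $\|\phi\|_{L^2}^2=\int(\phi^2-1)\,d\mathbf{x}+|\mathbb{T}^3|\lesssim1$. The $L^\infty$ bound is more delicate because $H^1\not\hookrightarrow L^\infty$ in three dimensions, so the energy alone does not suffice; here I would invoke the solution-space regularity $\nabla\phi\in H^2_{\mathrm{per}}$. Writing $\bar\phi=\bbint_{\mathbb{T}^3}\phi\,d\mathbf{x}$, the Poincaré inequality \eqref{h45} and the embedding $H^2(\mathbb{T}^3)\hookrightarrow L^\infty$ give $\|\phi-\bar\phi\|_{L^\infty}\lesssim\|\nabla\phi\|_{H^1}\lesssim1$, while $|\bar\phi|\lesssim\|\phi\|_{L^2}\lesssim1$, whence $\|\phi\|_{L^\infty}\lesssim1$.
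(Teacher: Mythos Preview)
Your proposal is correct and follows essentially the same approach as the paper: test $\eqref{h21}_2$ with $\mathbf{u}$, test $\eqref{h21}_3$ with $\mu$, and add, so that the capillary cross term $-\int\mathbf{u}\cdot\nabla\phi\,\Delta\phi$ cancels against the convective remainder (the paper writes this as $-\int\nabla(\mathbf{u}\cdot\nabla\phi)\cdot\nabla\phi$ on the phase side, which after one integration by parts is exactly the opposite term). The only cosmetic difference is in the $L^\infty$ bound on $\phi$: you use Poincar\'e plus the embedding $H^2\hookrightarrow L^\infty$ applied to $\phi-\bar\phi$, whereas the paper uses the Gagliardo--Nirenberg inequality $\|\phi\|_{L^\infty}\lesssim\|\nabla^2\phi\|^{3/4}\|\phi\|^{1/4}+\|\phi\|$ directly; both rely on the same solution-space control $\|\nabla\phi\|_{H^2}\le M$ and are equivalent.
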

\begin{proof}
Using \eqref{h25} and
$\eqref{h21}_1$, it is easy to see
\begin{equation}\label{h32}\begin{aligned}
&\rho G'(\rho)=G(\rho)+(p(\rho)-p(\bar{\rho})),\quad \rho G''(\rho)=p'(\rho),\\
&G(\rho)_{t}+{\rm div}(G(\rho) \mathbf{u})+(p(\rho)-p(\bar{\rho})){\rm div}\mathbf{u}=
0.
\end{aligned}\end{equation}
Using $\eqref{h21}_2$ and \eqref{h32} yields
that
\begin{equation}\label{h36}\begin{aligned}
&\f{d}{dt}\int_{\mathbb{T}^3}\lf(\f{1}{2}\rho
\mathbf{u}^2+G(\rho)\rg)d\mathbf{x}+\f{1}{2}\int_{\mathbb{T}^3}\nu(\phi)|\nabla \mathbf{u}+\nabla \mathbf{u}^T|^2d\mathbf{x}\\
&+\int_{\mathbb{T}^3}\lambda(\phi) |{\rm div}\mathbf{u}|^2d\mathbf{x}=-\int_{\mathbb{T}^3}\mathbf{u}\cdot\nabla\phi\Delta\phi d\mathbf{x} .\end{aligned}
\end{equation}
Multiplying $\eqref{h21}_3$ by $\mu$ and using $\eqref{h21}_4$, we have
\begin{equation}\label{h37}\f{1}{2}\f{d}{dt}\int_{\mathbb{T}^3}|\nabla\phi|^2d\mathbf{x}+\f{1}{4}\f{d}{dt}\int_{\mathbb{T}^3}\rho\lf(\phi^2-1\rg)^2d\mathbf{x} +\int_{\mathbb{T}^3}|\nabla\mu |^2d\mathbf{x}=-\int_{\mathbb{T}^3} \nabla(\mathbf{u}\cdot \nabla\phi)\cdot  \nabla\phi d\mathbf{x},
\end{equation}where we used
$$\f{1}{4}\int_{\mathbb{T}^3}\lf(\phi^2-1\rg)^2{\rm div}(\rho \mathbf{u})d\mathbf{x}=-\int_{\mathbb{T}^3}\rho \mathbf{u}\cdot \nabla\phi \lf(\phi^3-\phi\rg)d\mathbf{x}.$$
Adding \eqref{h36} and \eqref{h37}, and using \eqref{h22},
we get \eqref{h38}.
By \eqref{h25}, \eqref{h23} and \eqref{hg313}, we have $G(\bar{\rho})=G'(\bar{\rho})=0$ and
\begin{equation}\label{hg314}
c_{\bar{\rho}}(\rho-\bar{\rho})^2\leq G(\rho)\leq C_{\bar{\rho}}(\rho-\bar{\rho})^2.\end{equation}
Integrating \eqref{h38} for $t$, ans using \eqref{hg313} and \eqref{hg314}, we obtain
\begin{equation}
\label{hr32}
\begin{aligned}
&\|(\rho-\bar{\rho}, \mathbf{u}, \nabla\phi)(t)\|^2+\|\phi^2(t)-1\|^2+\int_0^t \|(\nabla \mathbf{u}, \nabla\mu)(\tau)\|^2d\tau
\\
&\lesssim
\|(\rho_0-\bar{\rho}, \mathbf{u}_0, \nabla\phi_0)\|^2+\|\phi^2_0-1\|^2.\end{aligned}\end{equation}
Also, by  \eqref{hr32}, we have
$$
\|\phi(t)\|^2=\int_{\mathbb{T}^3}\lf(\phi^2(t)-1+1\rg)d\mathbf{x}\leq |\mathbb{T}^3|^\f{1}{2}\lf(\|\phi^2(t)-1\|+|\mathbb{T}^3|^\f{1}{2}\rg)\leq C, $$and by Gagliardo-Nirenberg inequality, we get
$$
\|\phi(t)\|_{L^\infty}\leq C_0\|\nabla^2\phi(t)\|^{\f{3}{4}}\|\phi(t)\|^{\f{1}{4}} +C_0\|\phi(t)\|\leq C.
$$
The proof of Lemma \ref{lem31} is completed.\end{proof}

\vspace{0.2cm}Next, we derive the following estimate on $\nabla \phi$, which plays an essential role in the proof of  Theorem \ref{theo 2.1}.

\begin{lemma}\label{lem33}  Under the  assumption \eqref{h310}, it holds   that
	\begin{equation}
	\label{hr329}
	\f{d}{dt}\|\nabla^{k+1} \phi\|^2+\f{1}{4\bar{\rho}^2}\|\nabla^{k+3}\phi\|^2
	\lesssim \eta_1\|\nabla^{k+1}
	\sigma\|^2,\ \ \mathrm{for}\  k=0,1, 2.
	\end{equation}
\end{lemma}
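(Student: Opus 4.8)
The plan is to perform an energy estimate directly on the rewritten Cahn--Hilliard equation \eqref{h3280}, testing against a suitably chosen derivative of $\phi$. Since the principal part of \eqref{h3280} is $\phi_t + \f{1}{\rho}\Delta\big(\f{\Delta\phi}{\rho}\big)$, the natural quantity to control is $\|\nabla^{k+1}\phi\|^2$, and the fourth-order operator should, after integrating by parts, produce the positive dissipation term $\|\nabla^{k+3}\phi\|^2$ up to a constant like $\f{1}{4\bar\rho^2}$ coming from the factor $\f{1}{\rho^2}\sim\f{1}{\bar\rho^2}$ (using \eqref{hg313}). Concretely, I would apply $\nabla^{k+1}$ to \eqref{h3280}, multiply by $\nabla^{k+1}\phi$, and integrate over $\mathbb{T}^3$. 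The leading term $\int \nabla^{k+1}\big[\f{1}{\rho}\Delta(\f{\Delta\phi}{\rho})\big]\cdot\nabla^{k+1}\phi\,d\mathbf{x}$ is handled by moving two derivatives onto the test function; the top-order contribution is $\int \f{1}{\rho^2}|\nabla^{k+3}\phi|^2\,d\mathbf{x}\gtrsim \f{1}{4\bar\rho^2}\|\nabla^{k+3}\phi\|^2$, while all commutators where a derivative lands on $\f{1}{\rho}$ are lower order and absorbable.

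Next I would treat the remaining right-hand-side terms $\f{3\phi^2-1}{\rho}\Delta\phi$, $\f{6\phi}{\rho}(\nabla\phi)^2$, and the transport term $\mathbf{u}\cdot\nabla\phi$ after applying $\nabla^{k+1}$. The crucial structural point, emphasized in the introduction, is the strongly nonlinear term $(1-\phi^2)\Delta\phi$: since neither $\|\phi\|_{L^2}$ nor $\|\phi\|_{L^\infty}$ is small, I cannot treat $\f{3\phi^2-1}{\rho}\Delta\phi$ as a small perturbation directly. Instead I would rewrite $3\phi^2-1 = 3(\phi^2-1)+2$ so that the genuinely dangerous part carries the \emph{small} factor $\phi^2-1$ (small by \eqref{hr33} and the a priori smallness of $\|\phi^2-1\|_{L^2}$), while the constant-coefficient piece $\f{2}{\rho}\Delta\phi$ contributes a lower-order but sign-indefinite term that must be absorbed into the dissipation $\|\nabla^{k+3}\phi\|^2$ via Cauchy--Schwarz and interpolation (Lemma \ref{lem21}). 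For the nonlinear factors I would invoke Lemma \ref{lem22-1}, which gives $\|\nabla^{k+1} f(\phi)\|_{L^p}\lesssim \|\nabla^{k+1}\phi\|_{L^p}$ for smooth $f$ with bounded derivatives, combined with Gagliardo--Nirenberg \eqref{hg20} to distribute derivatives and convert everything into small constants times the dissipation, plus the single allowed term $\eta_1\|\nabla^{k+1}\sigma\|^2$.

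The coupling to the density enters through the factors $\f{1}{\rho}$, whose derivatives produce $\nabla^j\sigma$; each such occurrence is where the $\eta_1\|\nabla^{k+1}\sigma\|^2$ on the right of \eqref{hr329} originates. I would track these carefully, using Lemma \ref{lem22-1} on $\f{1}{\rho}=\f{1}{\bar\rho+\sigma}$ and Lemma \ref{lem22} for products, so that the top-order density contribution appears exactly as $\|\nabla^{k+1}\sigma\|^2$ with a small coefficient $\eta_1$ (controlled by the a priori bound $M$), and all genuinely higher-order density terms are either absorbed or paired against the velocity/$\phi$ dissipation. The main obstacle I anticipate is the bookkeeping of the top-order term $\f{2}{\rho}\Delta\phi$ after $k+1$ derivatives: it is second order in $\phi$ and cannot be made small by the a priori data, so it must be absorbed into the fourth-order dissipation by interpolating $\|\nabla^{k+3}\phi\|$ against lower norms, and getting the constant to land strictly below $\f{1}{4\bar\rho^2}$ (so that the dissipation survives with a definite coefficient) is the delicate accounting step. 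Collecting all estimates and choosing $M$ small yields \eqref{hr329}.
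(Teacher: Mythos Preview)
Your overall strategy matches the paper's: apply derivatives to \eqref{h3280}, pair with a suitable test function to generate $\f{d}{dt}\|\nabla^{k+1}\phi\|^2$ and the fourth-order dissipation, split off the small factor $\phi^2-1$ from $3\phi^2-1$, and use Lemmas \ref{lem21}--\ref{lem22} for the commutators. The paper applies $\nabla^k$ and tests against $-\Delta\nabla^k\phi$ (equivalent to your scheme after one integration by parts), and uses the decomposition $3\phi^2-1=2\phi^2+(\phi^2-1)$ rather than your $3(\phi^2-1)+2$; either split works.

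There is, however, a real issue in your treatment of the constant-coefficient remainder. After testing, the leading contribution from $\f{2}{\rho}\Delta\phi$ (or the paper's $\f{2\phi^2}{\rho}\Delta\phi$) is
\[
+\int_{\mathbb{T}^3}\f{2}{\rho}\,|\nabla^{k}\Delta\phi|^2\,d\mathbf{x}\ \ge\ 0,
\]
i.e.\ it lands on the left-hand side with a \emph{good} sign and can simply be discarded; this is exactly how the paper disposes of it en route to \eqref{hg328}. Your stated plan---treat it as sign-indefinite and absorb it into the dissipation by Cauchy--Schwarz and interpolation---would instead produce a term of the form $C_\epsilon\|\nabla^{k+1}\phi\|^2$ (or $C_\epsilon\|\nabla^{k+2}\phi\|^2$) with a large constant and no smallness factor, which cannot be placed anywhere in \eqref{hr329}. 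So the ``delicate accounting step'' you anticipate is not merely delicate: as written it would fail. Once you observe the favorable sign, the rest of your outline goes through just as in the paper.
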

\begin{proof}
Applying $\nabla^k$ to \eqref{h3280} and
multiplying it by $-\Delta \nabla^k \phi$ yields that
\begin{equation}\label{h328}
\begin{aligned}
&\f{1}{2}\f{d}{dt}\int_{\mathbb{T}^3}|\nabla^{k+1} \phi|^2d\mathbf{x} +\int_{\mathbb{T}^3}{\Big|}\nabla \lf(\f{\nabla^k \Delta\phi }{\rho}\rg){\Big|}^2d\mathbf{x}-\int_{\mathbb{T}^3} \nabla^k (\mathbf{u}\cdot\nabla\phi) \Delta \nabla^k\phi d\mathbf{x}\\
&=\sum_{1\leq l\leq k}
C_{k}^{l}\int_{\mathbb{T}^3} \nabla^{l}\lf(\f{1}{\rho}\rg) \nabla^{k-l}\Delta \lf(\f{\Delta\phi}{\rho}\rg) \nabla^{k}\Delta\phi d\mathbf{x}\\
&+\sum_{1\leq l\leq k}
C_{k}^{l}\int_{\mathbb{T}^3}\Delta\lf[ \nabla^{l}\lf(\f{1}{\rho}\rg) \nabla^{k-l}\Delta\phi\rg]\f{\nabla^{k}\Delta\phi}{\rho}  d\mathbf{x}-\int_{\mathbb{T}^3} \f{3\phi^2-1}{\rho} | \nabla^{k}\Delta \phi|^2d\mathbf{x}\\
&-\sum_{1\leq l\leq k}
C_{k}^{l}\int_{\mathbb{T}^3} \nabla^{l}\lf(\f{3\phi^2-1}{\rho}\rg) \nabla^{k-l}\Delta \phi \nabla^{k}\Delta\phi d\mathbf{x}-\int_{\mathbb{T}^3} \nabla^{k}\lf(\f{6\phi}{\rho}(\nabla\phi)^2\rg)\Delta \nabla^{k}\phi d\mathbf{x}.
\end{aligned}
\end{equation}
Noticing that
$$\begin{aligned}
&\int_{\mathbb{T}^3}{\Big|}\nabla \lf(\f{\nabla^{k} \Delta\phi }{\rho}\rg){\Big|}^2d\mathbf{x}\\
&=\int_{\mathbb{T}^3}\f{1}{\rho^2}|\nabla^{k+1} \Delta\phi|^2d\mathbf{x}+\int_{\mathbb{T}^3}\f{|\nabla\rho|^2}{\rho^4}| \nabla^{k} \Delta\phi|^2d\mathbf{x}-\int_{\mathbb{T}^3} \f{3\phi^2-1}{\rho} | \nabla^{k}\Delta \phi|^2d\mathbf{x}\\
&\leq -\int_{\mathbb{T}^3} \f{\phi^2-1}{\rho} | \nabla^{k}\Delta \phi|^2d\mathbf{x},
\end{aligned}$$
and using \eqref{hg313}, we obtain  from \eqref{h328}
\begin{equation}\label{hg328}
\f{1}{2}\f{d}{dt}\|\nabla^{k+1}\phi\|^2+\f{1}{4\bar{\rho}^2}\|\nabla^{k+1}\Delta \phi\|^2
\leq I_{1},
\end{equation}
where
\begin{equation}\label{h329}\begin{aligned}
I_{1}&=-\underline{\int_{\mathbb{T}^3} \f{\phi^2-1}{\rho} |\nabla^k\Delta \phi|^2d\mathbf{x}}_{I_{1}^1}+\underline{\int_{\mathbb{T}^3} \nabla^k (\mathbf{u}\cdot\nabla\phi) \Delta \nabla^k\phi d\mathbf{x}}_{I_{1}^2}\\&+\underline{\sum_{1\leq l\leq k}
	C_{k}^{l}\int_{\mathbb{T}^3} \nabla^{l}\lf(\f{1}{\rho}\rg) \nabla^{k-l}\Delta \lf(\f{\Delta\phi}{\rho}\rg) \nabla^{k}\Delta\phi d\mathbf{x}}_{I_{1}^3}\\
&+\underline{\sum_{1\leq l\leq k}
	C_{k}^{l}\int_{\mathbb{T}^3}\Delta\lf[ \nabla^{l}\lf(\f{1}{\rho}\rg) \nabla^{k-l}\Delta\phi\rg]\f{\nabla^{k}\Delta\phi}{\rho}  d\mathbf{x}}_{I_{1}^4}\\
&-\underline{\sum_{1\leq l\leq k}
	C_{k}^{l}\int_{\mathbb{T}^3}  \nabla^{l}\lf(\f{3\phi^2-1}{\rho}\rg) \nabla^{k-l}\Delta \phi \nabla^{k}\Delta\phi d\mathbf{x}}_{I_{1}^5}\\
&-\underline{\sum_{0\leq l\leq k}
	C_{k}^{l}\int_{\mathbb{T}^3} \nabla^l\lf(\f{6\phi}{\rho}\rg)\nabla^{k-l}(\nabla\phi)^2\Delta \nabla^{k}\phi d\mathbf{x}}_{I_{1}^6}.\end{aligned}
\end{equation}
We estimate  $I_{1}$. For  $I_{1}^1$ and $I_{1}^2$, by using H\"oder inequality and \eqref{h310}, we have
$$
\begin{aligned}
&\displaystyle\begin{aligned}I_{1}^1
&\displaystyle\lesssim\|\phi^2-1\|\|\Delta \nabla^{k}\phi\|^2_{L^4}\lesssim\eta_1\|\nabla^{k+2}\phi\|^2_{L^4}\\
&\displaystyle\overset{\eqref{hg20}}\lesssim\eta_1\|\nabla^{k+3} \phi\|^{2-\f{3}{2}}\|\nabla^{k+2} \phi\|^\f{3}{2}\lesssim\eta_1\|\nabla^{k+3} \phi\|^2,\end{aligned}\\&
\begin{aligned}I_{1}^2
&\displaystyle\lesssim\|\nabla^k (\mathbf{u}\cdot\nabla\phi)\|\|\Delta \nabla^{k}\phi\|\\
&\displaystyle\overset{\eqref{hg22}}\lesssim\lf(\|\mathbf{u}\|_{L^\infty}\|\nabla^{k+1} \phi\|+\|\nabla \phi\|_{L^\infty}\|\nabla^{k}\mathbf{u}\|\rg)\|\nabla^{k+2} \phi\|\\
&\displaystyle\lesssim\eta_1\|\nabla^{k+3} \phi\|^2.\end{aligned}
\end{aligned}
$$
Also, for $I_{1}^3$, by using H\"oder inequality and \eqref{h310}, we have
$$
\begin{aligned}
I^3_{1}&=-\sum_{1\leq l\leq k}
C_{k}^{l}\int_{\mathbb{T}^3} \nabla\lf[\nabla^{l}\lf(\f{1}{\rho}\rg) \nabla^{k}\Delta\phi \rg] \cdot \nabla^{k-l}\nabla\lf[{\rm div}\lf(\f{\nabla\phi}{\rho}\rg)-\nabla\lf(\f{1}{\rho}\rg)\cdot\nabla\phi\rg]  d\mathbf{x}\\
&\begin{aligned}\lesssim \sum_{1\leq l\leq k}&\lf({\Big\|}\nabla^{l}\lf(\f{1}{\rho}\rg){\Big\|}_{L^\infty} \|\nabla^{k+3}\phi\|+\Big\|\nabla^{l+1}\lf(\f{1}{\rho}\rg){\Big\|}_{L^3} \|\nabla^{k+2}\phi\|_{L^6}\rg)\times\\
&\times \lf({\Big\|}\nabla^{k-l+2}\lf(\f{\nabla\phi}{\rho}\rg){\Big\|}+{\Big\|}\nabla^{k-l+1}\lf[\nabla\lf(\f{1}{\rho}\rg)\cdot\nabla\phi\rg]{\Big\|}\rg)\end{aligned}
\\
&\begin{aligned}\overset{\eqref{hg21},\, \eqref{hg22}}\lesssim &\sum_{1\leq l\leq k}\lf(\|\nabla^{l}\sigma\|_{L^\infty} \|\nabla^{k+3}\phi\|+\|\nabla^{l+1}\sigma\|_{L^3} \|\nabla^{k+2}\phi\|_{L^6}\rg)\times\\
&\times (\|\nabla^{k-l+3}\phi\|+\|\nabla\phi\|_{L^\infty}\|\nabla^{k-l+2}\sigma\|+\|\nabla^{k-l+2}\phi\|)\end{aligned}\\
&\overset{\eqref{hg20},\, \eqref{h310}}\lesssim \eta_1\sum_{1\leq l\leq k} \|\nabla^{k+3}\phi\|(\|\nabla^{k-l+3}\phi\|+\|\nabla^{k-l+2}\sigma\|+\|\nabla^{k-l+2}\phi\|)\\
&\overset{\eqref{hr317}}\lesssim \eta_1\lf(\|\nabla^{k+1} \sigma\|^2+\|\nabla^{k+3} \phi\|^2\rg).
\end{aligned}$$
By the similar arguments, we have
$$
\begin{aligned}
I^4_{1}&=-\sum_{1\leq l\leq k}
C_{k}^{l}\int_{\mathbb{T}^3}\nabla\lf[ \nabla^{l}\lf(\f{1}{\rho}\rg) \nabla^{k-l}\Delta\phi\rg]\cdot \nabla\lf[\f{\nabla^{k}\Delta\phi}{\rho} \rg] d\mathbf{x}\\
&\begin{aligned}\lesssim \sum_{1\leq l\leq k}&\lf({\Big\|}\nabla^{l}\lf(\f{1}{\rho}\rg){\Big\|}_{L^\infty} \|\nabla^{k-l+3}\phi\|+\Big\|\nabla^{l+1}\lf(\f{1}{\rho}\rg){\Big\|}_{L^3} \|\nabla^{k-l+2}\phi\|_{L^6}\rg)\times\\
&\times \lf(\|\nabla^{k+3}\phi\|+\|\nabla^{k+2}\phi\|{\Big\|}\nabla\lf(\f{1}{\rho}\rg){\Big\|}_{L^\infty}\rg)\end{aligned}
\\
&\overset{\eqref{hg21}}\lesssim \sum_{1\leq l\leq k}\lf(\|\nabla^{l}\sigma\|_{L^\infty}\|\nabla^{k-l+3}\phi\|+\|\nabla^{l+1}\sigma\|_{L^3} \|\nabla^{k-l+2}\phi\|_{L^6}\rg)\times\\
&\times  (\|\nabla^{k+3}\phi\|+\|\nabla^{k+2}\sigma\|\|\nabla\sigma\|_{L^\infty})\\
&\overset{\eqref{hg20},\, \eqref{h310}}\lesssim \eta_1\sum_{1\leq l\leq k} \|\nabla^{k-l+3}\phi\|(\|\nabla^{k+3}\phi\|+\|\nabla^{k+2}\sigma\|)\overset{\eqref{hr317}}\lesssim \eta_1\lf(\|\nabla^{k+1} \sigma\|^2+\|\nabla^{k+3} \phi\|^2\rg),
\end{aligned}$$
 $$
\begin{aligned}
I^5_{1}&=\sum_{1\leq l\leq k}
C_{k}^{l}\int_{\mathbb{T}^3}  \nabla^{l}\lf(\f{3\phi^2-1}{\rho}\rg) \nabla^{k-l}\Delta \phi \nabla^{k}\Delta\phi d\mathbf{x}\\
&\lesssim \sum_{1\leq l\leq k}\lf({\Big\|}\nabla^{l}\lf(\f{\phi^2}{\rho}\rg){\Big\|}+\Big\|\nabla^{l}\lf(\f{1}{\rho}\rg){\Big\|}\rg)\|\nabla^{k-l+2}\phi\|_{L^3} \|\nabla^{k+2}\phi\|_{L^6}
\\
&\overset{\eqref{hg21}}\lesssim \sum_{1\leq l\leq k}\lf(\|\nabla^{l}\sigma\|+\|\nabla^{l}\phi\|\rg)\|\nabla^{k-l+2}\phi\|_{L^3} \|\nabla^{k+2}\phi\|_{L^6}\\
&\overset{\eqref{hg20},\, \eqref{h310}}\lesssim \eta_1\sum_{1\leq l\leq k} \lf(\|\nabla^{l}\sigma\|+\|\nabla^{l}\phi\|\rg)\|\nabla^{k+3}\phi\|\overset{\eqref{hr317}}\lesssim \eta_1\lf(\|\nabla^{k+1} \sigma\|^2+\|\nabla^{k+3} \phi\|^2\rg),
\end{aligned}$$ and $$
\begin{aligned}
I^6_{1}&=\sum_{0\leq l\leq k}
C_{k}^{l}\int_{\mathbb{T}^3} \nabla^l\lf(\f{6\phi}{\rho}\rg)\nabla^{k-l}(\nabla\phi)^2\Delta \nabla^{k}\phi d\mathbf{x}\\
&\lesssim \sum_{1\leq l\leq k}{\Big\|}\nabla^{l}\lf(\f{\phi}{\rho}\rg){\Big\|}_{L^3}\|\nabla^{k-l}(\nabla\phi)^2\| \|\nabla^{k+2}\phi\|_{L^6}
\\
&\overset{\eqref{hg21}, \eqref{hg22}}\lesssim \sum_{1\leq l\leq k}\lf(\|\nabla^{l}\sigma\|_{L^3}+\|\nabla^{l}\phi\|_{L^3}\rg)\|\nabla \phi\|_{L^\infty}\|\nabla^{k-l+1}\phi\| \|\nabla^{k+2}\phi\|_{L^6}\\
&\overset{\eqref{hg20},\, \eqref{h310}}\lesssim \eta_1\sum_{1\leq l\leq k} \|\nabla^{k-l+1}\phi\|\|\nabla^{k+3}\phi\|\overset{\eqref{hr317}}\lesssim \eta_1\|\nabla^{k+3} \phi\|^2.
\end{aligned}$$
Substituting the estimates for $I_{1}^j(j=1,\cdots,6)$ into \eqref{h329}, we have
$$I_{1}\lesssim \eta_1\lf(\|\nabla^{k+1}
\sigma\|^2+
\|\nabla^{k+3}\phi\|^2\rg),$$
and then, we obtain
\eqref{hr329} from \eqref{hg328}.
The proof of Lemma \ref{lem33} is completed.\end{proof}

\vspace{0.2cm} In the following two lemmas, we will give the estimates for $(\sigma, \mathbf{u})$.
\begin{lemma}\label{lem32}  Under the  assumption \eqref{h310}, it holds   that
	\begin{equation}
	\label{hr316}
		\f{d}{dt}\lf(\|\nabla^k\mathbf{u}\|^2
	+\f{p'(\bar{\rho})}{\bar{\rho}^2}\|\nabla^k\sigma\|^2\rg)+\f{\nu_0}{\bar{\rho}}\|\nabla^{k+1} \mathbf{u}\|^2\lesssim \eta_1\lf(\|\nabla^{k}
	\sigma\|^2+\|\nabla^{k+2}\phi\|^2\rg),
	\end{equation}for $k=0,\cdots, 3$.
\end{lemma}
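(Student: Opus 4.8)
The plan is to carry out a combined $\nabla^k$-level energy estimate on the reformulated system \eqref{h313}. Concretely, I would apply $\nabla^k$ to the momentum equation $\eqref{h313}_2$ and take the $L^2$ inner product with $\nabla^k\mathbf{u}$, then apply $\nabla^k$ to the continuity equation $\eqref{h313}_1$ and take the $L^2$ inner product with $\f{p'(\bar{\rho})}{\bar{\rho}^2}\nabla^k\sigma$, and add the two identities. The key structural point is that the pressure term $\f{p'(\bar{\rho})}{\bar{\rho}}\int_{\mathbb{T}^3}\nabla^{k+1}\sigma\cdot\nabla^k\mathbf{u}\,d\mathbf{x}$ from the momentum equation and the compression term $\f{p'(\bar{\rho})}{\bar{\rho}}\int_{\mathbb{T}^3}\nabla^k\,\mathrm{div}\,\mathbf{u}\,\nabla^k\sigma\,d\mathbf{x}$ from the continuity equation cancel exactly after one integration by parts on the torus (boundary terms vanishing by periodicity). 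This yields the time derivative of $\|\nabla^k\mathbf{u}\|^2+\f{p'(\bar{\rho})}{\bar{\rho}^2}\|\nabla^k\sigma\|^2$ on the left and leaves the viscous terms, the capillary term $\f{1}{\bar{\rho}}\int_{\mathbb{T}^3}\nabla^k(\nabla\phi\Delta\phi)\cdot\nabla^k\mathbf{u}\,d\mathbf{x}$, and the nonlinear sources from $g_1$ and $\mathbf{g}_2$ in \eqref{h314}--\eqref{h315} to be controlled.

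Second, I would extract the dissipation from the two viscous terms. Integrating $-\f{2}{\bar{\rho}}\int_{\mathbb{T}^3}\nabla^k\,\mathrm{div}[\nu(\phi)\mathbf{D}(\mathbf{u})]\cdot\nabla^k\mathbf{u}\,d\mathbf{x}$ by parts and splitting $\nabla^k[\nu(\phi)\mathbf{D}(\mathbf{u})]=\nu(\phi)\mathbf{D}(\nabla^k\mathbf{u})+[\nabla^k,\nu(\phi)]\mathbf{D}(\mathbf{u})$, the principal part gives $\f{2}{\bar{\rho}}\int_{\mathbb{T}^3}\nu(\phi)|\mathbf{D}(\nabla^k\mathbf{u})|^2\,d\mathbf{x}$. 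Combining with the analogous $\lambda$ contribution and the torus identity $2\int_{\mathbb{T}^3}|\mathbf{D}(\mathbf{v})|^2\,d\mathbf{x}=\int_{\mathbb{T}^3}|\nabla\mathbf{v}|^2\,d\mathbf{x}+\int_{\mathbb{T}^3}|\mathrm{div}\,\mathbf{v}|^2\,d\mathbf{x}$ applied to $\mathbf{v}=\nabla^k\mathbf{u}$, the lower bound $\nu(\phi)\geq\nu_0$ from \eqref{h22} produces the full dissipation $\f{\nu_0}{\bar{\rho}}\|\nabla^{k+1}\mathbf{u}\|^2$ claimed on the left-hand side.

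Third, the remaining terms must be absorbed into the dissipation or bounded by $\eta_1(\|\nabla^k\sigma\|^2+\|\nabla^{k+2}\phi\|^2)$. For the convection term in $\mathbf{g}_2$ and for $g_1=-\mathrm{div}(\sigma\mathbf{u})$, I would integrate the top-order parts $\mathbf{u}\cdot\nabla\nabla^k\mathbf{u}$ and $\mathbf{u}\cdot\nabla\nabla^k\sigma$ by parts so that only $\mathrm{div}\,\mathbf{u}$ (hence a factor $\lesssim\eta_1$) multiplies $|\nabla^k\mathbf{u}|^2$ and $|\nabla^k\sigma|^2$; the terms containing $h_1(\sigma)$ and $h_2(\sigma)$ carry the smallness $|h_i(\sigma)|\lesssim|\sigma|$, and I would distribute their derivatives using Lemma \ref{lem22-1}, the Moser inequality \eqref{hg22} and Gagliardo--Nirenberg \eqref{hg20}. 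For the capillary contributions the essential point is to keep the highest $\phi$-derivative at the level $\nabla^{k+2}\phi$: the direct term $\f{1}{\bar{\rho}}\nabla^k(\nabla\phi\Delta\phi)$ is already at this level, whereas the piece $h_2(\sigma)\nabla(\nabla\phi\Delta\phi)$ of $\mathbf{g}_2$ requires one integration by parts moving a derivative onto $\mathbf{u}$; the smallness of $\|\nabla\phi\|_{L^\infty}$ then gives bounds of the type $\eta_1\|\nabla^{k+2}\phi\|\,\|\nabla^{k+1}\mathbf{u}\|$, split by Young's inequality. Throughout I would invoke \eqref{hr317} to replace $\|\nabla^k\mathbf{u}\|$ by $\|\nabla^{k+1}\mathbf{u}\|$ for $k\geq1$ (the lowest level being controlled via the basic energy identity of Lemma \ref{lem31}), and finally choose $\eta_1$ small to absorb every $\|\nabla^{k+1}\mathbf{u}\|^2$ term into the left-hand side. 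The main obstacle is precisely the $\phi$-dependence of $\nu$ and $\lambda$: the commutators $[\nabla^k,\nu(\phi)]\mathbf{D}(\mathbf{u})$ and $[\nabla^k,\lambda(\phi)]\mathrm{div}\,\mathbf{u}$ couple high derivatives of $\mathbf{u}$ to derivatives of $\phi$, and the delicate task is to verify, through Lemma \ref{lem22-1} and the product inequalities, that each such term either carries an absorbable factor $\|\nabla\phi\|_{H^2}\lesssim\eta_1$ or reduces to the admissible right-hand side, without generating any uncontrolled $\|\nabla^{k+3}\phi\|$ or net $\|\nabla^{k+1}\mathbf{u}\|^2$ contribution.
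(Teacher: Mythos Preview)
Your proposal is correct and follows essentially the same route as the paper: apply $\nabla^k$ to \eqref{h313}, test the momentum equation against $\nabla^k\mathbf{u}$ and the continuity equation against $\f{p'(\bar\rho)}{\bar\rho^2}\nabla^k\sigma$, cancel the pressure/compression pair, extract the dissipation via $\nu(\phi)\geq\nu_0$ and the torus identity for $\mathbf{D}$, and bound the commutators $[\nabla^k,\nu(\phi)]\mathbf{D}(\mathbf{u})$, the convection, $h_1,h_2$, and capillary terms by Lemmas~\ref{lem21}--\ref{lem22} together with \eqref{hr317}. The paper treats $k=0$ separately (no commutators arise) and, for $k\ge1$, pushes one derivative onto $\nabla^k\mathbf{u}$ in $I_4^2$ and $I_4^3$ rather than isolating the top-order transport piece as you describe, but this is only a cosmetic difference.
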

\begin{proof} We first derive \eqref{hr316} for $k=0$. Multiplying $\eqref{h313}_2$ by $\mathbf{u}$, and using $\eqref{h313}_1$ and \eqref{h22}, we have
\begin{equation}\label{hh324}\begin{aligned}
\f{1}{2}&\f{d}{dt}\lf(\|\mathbf{u}\|^2
+\f{p'(\bar{\rho})}{\bar{\rho}^2}\|\sigma\|^2\rg)+\f{\nu_0}{\bar{\rho}}\int_{\mathbb{T}^3}|\mathbf{D}(\mathbf{u})|^2 d\mathbf{x}\\
&\leq \underline{\int_{\mathbb{T}^3} \lf(\mathbf{g}_2\cdot\mathbf{u}-\f{p'(\bar{\rho})}{\bar{\rho}^2} g_1\sigma\rg) d\mathbf{x}}_{I_{2}}-\underline{\frac{1}{\bar\rho}\int_{\mathbb{T}^3} \nabla\phi\Delta\phi \cdot \mathbf{u}d\mathbf{x}}_{I_{3}}.
\end{aligned}
\end{equation}
Using \eqref{h314} and \eqref{h315}, it is easy to check by the same lines as in \cite[Lemma 2.1]{W 12} that
$$
I_2\lesssim \eta_1\lf(\|\sigma\|^2+\|\nabla \mathbf{u}\|^2\rg).
$$
Also, for $I_3$, we have
$$
I_3\lesssim \|\nabla\phi\|_{L^3}\|\Delta\phi\|\|\mathbf{u}\|_{L^6}\overset{\eqref{hg20}, \eqref{h310}}\lesssim \eta_1\lf(\|\nabla^2\phi\|^2+\|\nabla \mathbf{u}\|^2\rg).
$$
Therefore, we obtain \eqref{hr316} for $k=0$ from \eqref{hh324}.

\vspace{0.2cm}Next, we  derive \eqref{hr316} for $k=1,\cdots, 3$.  Applying $\nabla^k$ to \eqref{h313},  we have
\begin{equation}\label{h317}\begin{aligned}
&\nabla^k \sigma_{t}+\bar{\rho}{\rm div}\nabla^k \mathbf{u}=\nabla^k g_1, \\
&\begin{aligned}\nabla^k \mathbf{u}_t&-\f{2}{\bar{\rho}}{\rm div}\lf[\nu(\phi)\mathbf{D}(\nabla^k\mathbf{u})\rg]-\f{1}{\bar{\rho}}\nabla\lf[(\nu(\phi)+\lambda(\phi)) {\rm div}\nabla^k\mathbf{u}\rg]+\f{p'(\bar{\rho})}{\bar{\rho}}\nabla^{k}\nabla\sigma\\
&+\frac{1}{\bar\rho}\nabla^k\lf[\nabla\phi\Delta\phi\rg]=\nabla^k\mathbf{g}_2+\frac{2}{\bar{\rho}}{\rm div}\lf[\nabla^{k}(\nu(\phi)\mathbf{D} (\mathbf{u}))-\nu(\phi)\mathbf{D}(\nabla^k\mathbf{u})\rg]\\
&+\frac{1}{\bar{\rho}}\nabla\lf[\nabla^{k}((\nu(\phi)+\lambda(\phi)){\rm div} \mathbf{u})-(\nu(\phi)+\lambda(\phi)) {\rm div}\nabla^k\mathbf{u}\rg].\end{aligned}
\end{aligned}
\end{equation}
Multiplying $\eqref{h317}_2$ by $\nabla^{k}
\mathbf{u}$, and using $\eqref{h317}_1$, \eqref{h314} and \eqref{h315}, we have
\begin{equation}\label{h324}\begin{aligned}
\f{1}{2}&\f{d}{dt}\lf(\|\nabla^{k}\mathbf{u}\|^2
+\f{p'(\bar{\rho})}{\bar{\rho}^2}\|\nabla^{k}\sigma\|^2\rg)+\f{1}{\bar{\rho}}\int_{\mathbb{T}^3}\nu(\phi)|\mathbf{D}(\nabla^{k}\mathbf{u})|^2 d\mathbf{x}\\
&+\f{1}{\bar{\rho}}\int_{\mathbb{T}^3}(\nu(\phi)+\lambda(\phi))|{\rm div}\nabla^{k}\mathbf{u}|^2 d\mathbf{x}
=I_{4},
\end{aligned}
\end{equation}
where
\begin{equation}\label{h327}\begin{aligned}
I_{4}=&\f{P'(\bar{\rho})}{\bar{\rho}^2}\underline{\int_{\mathbb{T}^3} \nabla^{k}\sigma \nabla^{k}{\rm div}(\sigma\mathbf{u})d\mathbf{x}}_{I_{4}^1}+\underline{\int_{\mathbb{T}^3} \nabla^{k}\lf[-
	(\mathbf{u},\nabla)\mathbf{u}+h_1(\sigma)\nabla\sigma\rg] \cdot \nabla^{k}\mathbf{u}d\mathbf{x}}_{I_{4}^2}\notag \\
&\underline{-\int_{\mathbb{T}^3} \nabla^{k}\lf[2h_2(\sigma){\rm div}\lf(\nu(\phi)\mathbf{D}(\mathbf{u})\rg)+h_2(\sigma)\nabla\lf((\nu(\phi)+\lambda(\phi)) {\rm div}\mathbf{u}\rg)\rg] \cdot \nabla^{k}\mathbf{u}d\mathbf{x}}_{I_{4}^3}\notag
\end{aligned}
\end{equation}
\begin{equation}\label{h327}\begin{aligned}
&\underline{-\frac{1}{\bar\rho}\int_{\mathbb{T}^3} \nabla^{k}\lf[\nabla\phi\Delta\phi\rg]\cdot \nabla^{k}\mathbf{u}d\mathbf{x}}_{I_{4}^4}+\f{2}{\bar{\rho}}\underline{\sum_{1\leq l\leq k}
	C_{k}^{l}\int_{\mathbb{T}^3} \nabla^{l}\nu(\phi)\mathbf{D} (\nabla^{k-l}\mathbf{u}):
	\nabla \nabla^{k}\mathbf{u} d\mathbf{x}}_{I_{4}^5}\\
&+\f{1}{\bar{\rho}}\underline{\sum_{1\leq l\leq k}
	C_{k}^{l}\int_{\mathbb{T}^3} \nabla^{l}(\nu(\phi)+\lambda(\phi)){\rm div} \nabla^{k-l}\mathbf{u}\,{\rm div} \nabla^{k}\mathbf{u} d\mathbf{x}}_{I_{4}^6}\\
&+\underline{\int_{\mathbb{T}^3} \nabla^{k}\lf[h_2(\sigma)\nabla\phi\Delta\phi\rg]\cdot \nabla^{k}\mathbf{u}d\mathbf{x}}_{I_{4}^7}.
\end{aligned}
\end{equation}
We   estimate  $I_{4}$.  Noticing that
$$
I_4^1=\f{1}{2}\int_{\mathbb{T}^3} {\rm div}\mathbf{u} (\nabla^{k}\sigma)^2d\mathbf{x}
+\sum_{1\leq l\leq k}
C_{k}^{l}\int_{\mathbb{T}^3} {\rm div}\lf(\nabla^{l}\mathbf{u}\nabla^{k-l}\sigma\rg)
\nabla^{k}\sigma d\mathbf{x},$$
we have
$$
\begin{aligned}
I_4^1&\lesssim \| {\rm div}\mathbf{u}\|_{L^\infty}\|\nabla^{k}\sigma\|^2+\lf(\|\nabla^{2}\mathbf{u}\|_{L^6}\|\nabla^{k-1}\sigma\|_{L^3}+\|\nabla\mathbf{u}\|_{L^\infty}\|\nabla^{k}\sigma\|\rg)\|\nabla^{k}\sigma\|\\
&+\sum_{2\leq l\leq k}
\lf(\|\nabla^{l+1}\mathbf{u}\|\|\nabla^{k-l}\sigma\|_{L^\infty}+\|\nabla^{l}\mathbf{u}\|_{L^6}\|\nabla^{k-l+1}\sigma\|_{L^3}\rg)\|\nabla^{k}\sigma\|\\
&\overset{\eqref{h310}, \eqref{hg20}}\lesssim \eta_1\lf(\|\nabla^{k}\sigma\|^2+\|\nabla^{k-1}\sigma\|^2\rg) +\eta_1\sum_{2\leq l\leq k}\|\nabla^{l+1} \mathbf{u}\|^2\\
&\overset{\eqref{hr317}}\lesssim \eta_1\lf(\|\nabla^{k}\sigma\|^2+\|\nabla^{k+1} \mathbf{u}\|^2\rg).\end{aligned}$$
Next, for  $I_{4}^2$, we have
$$
\begin{aligned}
I_4^2&=-\int_{\mathbb{T}^3} \nabla^{k-1}\lf[-
(\mathbf{u},\nabla)\mathbf{u}+h_1(\sigma)\nabla\sigma\rg] \cdot \nabla^{k+1}\mathbf{u}d\mathbf{x}\\
&\overset{\eqref{hg22}}\lesssim \lf(\|\mathbf{u}\|_{L^\infty}\|\nabla^{k}\mathbf{u}\|+\|\nabla^{k-1}\mathbf{u}\|\|\nabla\mathbf{u}\|_{L^\infty}\rg)\|\nabla^{k+1}\mathbf{u}\|\\
&+\lf(\|h_1(\sigma)\|_{L^\infty}\|\nabla^{k}\sigma\|+\|\nabla^{k-1}h_1(\sigma)\|\|\nabla\sigma\|_{L^\infty}\rg)\|\nabla^{k+1}\mathbf{u}\|\\
&\overset{\eqref{h310}, \eqref{hg21}}\lesssim \eta_1\lf(\|\nabla^{k}\mathbf{u}\|+\|\nabla^{k-1}\mathbf{u}\|+\|\nabla^{k}\sigma\|+\|\nabla^{k-1}\sigma\|\rg)\|\nabla^{k+1}\mathbf{u}\| \\
&\overset{\eqref{hr317}}\lesssim \eta_1\lf(\|\nabla^{k}\sigma\|^2+\|\nabla^{k+1} \mathbf{u}\|^2\rg).\end{aligned}$$
For the term $I_{4}^3$,  using   Leibniz formula and  H\"oder inequality yields that \begin{equation}\label{hr327}\begin{aligned}
I_{4}^3&=\int_{\mathbb{T}^3} \nabla^{k-1}\lf[2h_2(\sigma){\rm div}\lf(\nu(\phi)\mathbf{D}(\mathbf{u})\rg)+h_2(\sigma)\nabla\lf((\nu(\phi)+\lambda(\phi)) {\rm div}\mathbf{u}\rg)\rg] \cdot \nabla^{k+1}\mathbf{u}d\mathbf{x}\\
&=\underline{ \sum_{0\leq l\leq k-1}
C_{k-1}^{l}\int_{\mathbb{T}^3} \nabla^lh_2(\sigma) {\rm div} \nabla^{k-l-1}\lf(\nu(\phi)\mathbf{D}(\mathbf{u})\rg)\cdot \nabla^{k+1}\mathbf{u} d\mathbf{x}}_{J_1}\\
&+\underline{\sum_{0\leq l\leq k-1}
	C_{k-1}^{l}\int_{\mathbb{T}^3} \nabla^lh_2(\sigma)  \nabla^{k-l}\lf((\nu(\phi)+\lambda(\phi)) {\rm div}\mathbf{u}\rg)\cdot \nabla^{k+1}\mathbf{u} d\mathbf{x}}_{J_2}. \end{aligned}\end{equation}  By using H\"oder inequality, \eqref{h310} and Sobolev embedding, we have
$$\begin{aligned}
J_1&\lesssim\|h_2(\sigma)\|_{L^\infty}\| \nabla\lf(\nu(\phi)\mathbf{D}(\mathbf{u})\rg)\|\|\nabla^{2}\mathbf{u}\|\\
& \lesssim\eta_1\lf(\|\nu(\phi)\|_{L^\infty}\|\nabla^2\mathbf{u}\|+\|\nu'(\phi)\|_{L^\infty}\|\nabla\phi\|\|\nabla\mathbf{u}\|\rg)\|\nabla^{2}\mathbf{u}\|
\\
&\overset{\eqref{hr33}} \lesssim\eta_1\lf(\|\nabla^2\mathbf{u}\|+\|\nabla\mathbf{u}\|\rg)\|\nabla^{2}\mathbf{u}\| \overset{\eqref{hr317}}\lesssim\eta_1\|\nabla^{2}\mathbf{u}\|^2\end{aligned}$$ for $k=1$, and $$\begin{aligned}
J_1&\lesssim \sum_{0\leq l\leq k-1}\|\nabla^lh_2(\sigma) \nabla^{k-l}\lf(\nu(\phi)\mathbf{D}(\mathbf{u})\rg)\|\|\nabla^{k+1}\mathbf{u}\|\\
& \lesssim\|h_2(\sigma)\|_{L^\infty}\| \nabla^{k}\lf(\nu(\phi)\mathbf{D}(\mathbf{u})\rg)\|\|\nabla^{k+1}\mathbf{u}\|\\
&+\sum_{1\leq l\leq k-1}\|\nabla^lh_2(\sigma) \|_{L^3}\| \nabla^{k-l}\lf(\nu(\phi)\mathbf{D}(\mathbf{u})\rg)\|\|\nabla^{k+1}\mathbf{u}\|\\
& \overset{\eqref{hg22}, \eqref{hg21}}\lesssim \eta_1\lf(\|\nu(\phi)\|_{L^\infty}\|\nabla^{k+1}\mathbf{u}\|+\|\nabla^{k}\phi\|\|\nabla\mathbf{u}\|_{L^\infty}\rg) \|\nabla^{k+1}\mathbf{u}\|\\
&+\sum_{1\leq l\leq k-1}\|\nabla^l\sigma \|_{L^3}\lf(\|\nu(\phi)\|_{L^\infty}\|\nabla^{k-l+1}\mathbf{u}\|+\|\nabla^{k-l}\phi\|\|\nabla\mathbf{u}\|_{L^\infty}\rg)\|\nabla^{k+1}\mathbf{u}\|\\
&+\sum_{1\leq l\leq k-1}\|\nabla^l\sigma \|_{L^3}\lf(\|\nu(\phi)\|_{L^\infty}\|\nabla^{k-l+2}\mathbf{u}\|+\|\nabla^{k-l+1}\phi\|\|\nabla\mathbf{u}\|_{L^\infty}\rg)\|\nabla^{k+1}\mathbf{u}\|\\
&\overset{\eqref{h310}, \eqref{hr317}}\lesssim \eta_1\lf(\|\nabla^{k+2}\phi\|^2+\|\nabla^{k+1} \mathbf{u}\|^2\rg)\end{aligned}$$ for $k=2,\cdots,3$. Therefore,  by following the same argument on $J_2$,  we obtain from  \eqref {hr327} that
$$
I_4^3\lesssim \eta_1\lf(\|\nabla^{k+2}\phi\|^2+\|\nabla^{k+1} \mathbf{u}\|^2\rg).
$$
Also, for  $I_{4}^4$, we have
$$
\begin{aligned}
I_4^4&=-\sum_{0\leq l\leq k}
C_{k}^{l}\int_{\mathbb{T}^3} \nabla^{l+1}\phi\nabla^{k-l}\Delta\phi\cdot \nabla^{k}\mathbf{u}d\mathbf{x}\\
&\lesssim \|\nabla\phi\|_{L^\infty}\|\nabla^{k+2}\phi\|\|\nabla^{k}\mathbf{u}\|+\sum_{1\leq l\leq k} \|\nabla^{l+1}\phi\|_{L^6}\|\nabla^{k-l+2}\phi\|_{L^3}\|\nabla^{k}\mathbf{u}\|\\
&\overset{\eqref{h310},\eqref{hr317}}\lesssim \eta_1\lf(\|\nabla^{k+2}\phi\|^2+\|\nabla^{k+1} \mathbf{u}\|^2\rg).\end{aligned}$$
For  $I_{4}^5$, we have
$$
\begin{aligned}
I_4^5&=\sum_{1\leq l\leq k}
C_{k}^{l}\int_{\mathbb{T}^3} \nabla^{l}\nu(\phi)\mathbf{D} (\nabla^{k-l}\mathbf{u}):
\nabla \nabla^{k}\mathbf{u} d\mathbf{x}\\
&\overset{\eqref{hg21}}\lesssim \sum_{1\leq l\leq k} \|\nabla^{l}\phi\|_{L^3}\|\nabla^{k-l+1}\mathbf{u}\|_{L^6}\|\nabla^{k+1}\mathbf{u}\| \overset{\eqref{h310},\eqref{hr317}}\lesssim \eta_1\|\nabla^{k+1} \mathbf{u}\|^2.\end{aligned}$$
By the same lines as in  $I_{4}^5$ and $I_{4}^3$, we have
$$I_4^6\lesssim \eta_1\|\nabla^{k+1} \mathbf{u}\|^2, \ \qquad I_4^7\lesssim \eta_1\big(\|\nabla^{k+2}\phi\|^2+\|\nabla^{k+1} \mathbf{u}\|^2\big).$$
Using \eqref{h327} and the estimates for $I_{4}^j(j=1,\cdots,6)$, we obtain \eqref{hr316} for $k=1,\cdots, 3$ from  \eqref{h324}.
The proof of Lemma \ref{lem32} is completed.\end{proof}

\begin{lemma}\label{lem35}  Under the  assumption \eqref{h310}, it holds   that
 \begin{equation}
	\label{hr326}
	\begin{aligned}
	\f{d}{dt}&\int_{\mathbb{T}^3} \nabla^{k} \mathbf{u}
	\cdot\nabla^{k+1}\sigma d\mathbf{x}+\f{p'(\bar{\rho})}{2\bar{\rho}} \|\nabla^{k+1} \sigma\|^2
	\\
	&\lesssim \eta_1 \lf(	\|\nabla^{k+2} \mathbf{u}\|^2+\|\nabla^{k+3}\phi\|^2\rg)+\|\nabla^{k+1} \mathbf{u}\|^2,\ \ \mathrm{for}\  k=0,\cdots, 2.
	\end{aligned}\end{equation}
\end{lemma}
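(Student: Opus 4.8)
The plan is to use the cross-term (interpolation) device of \cite{GW 11}: since the continuity equation supplies no dissipation for $\sigma$, the missing control of $\|\nabla^{k+1}\sigma\|^2$ is recovered by differentiating the mixed quantity $\int_{\mathbb{T}^3}\nabla^k\mathbf{u}\cdot\nabla^{k+1}\sigma\,d\mathbf{x}$ in time and reading off the pressure gradient from the momentum equation. First I would apply the product rule,
$$\f{d}{dt}\int_{\mathbb{T}^3}\nabla^k\mathbf{u}\cdot\nabla^{k+1}\sigma\,d\mathbf{x}=\int_{\mathbb{T}^3}\nabla^k\mathbf{u}_t\cdot\nabla^{k+1}\sigma\,d\mathbf{x}+\int_{\mathbb{T}^3}\nabla^k\mathbf{u}\cdot\nabla^{k+1}\sigma_t\,d\mathbf{x},$$
then substitute $\nabla^k\mathbf{u}_t$ from $\eqref{h317}_2$ into the first integral and $\sigma_t=-\bar{\rho}\,{\rm div}\mathbf{u}+g_1$ from $\eqref{h313}_1$ into the second.

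The dissipation is produced by the pressure term $-\f{p'(\bar{\rho})}{\bar{\rho}}\nabla^{k+1}\sigma$ hidden in $\nabla^k\mathbf{u}_t$: pairing it with $\nabla^{k+1}\sigma$ gives $-\f{p'(\bar{\rho})}{\bar{\rho}}\|\nabla^{k+1}\sigma\|^2$, which I move to the left-hand side, retaining only the factor $\f{1}{2}$ so that the $\eta_1$-small multiples of $\|\nabla^{k+1}\sigma\|^2$ generated below can be absorbed. From the second integral the principal piece is $-\bar{\rho}\int_{\mathbb{T}^3}\nabla^k\mathbf{u}\cdot\nabla^{k+1}{\rm div}\mathbf{u}\,d\mathbf{x}$; a single integration by parts converts it into $\bar{\rho}\|\nabla^k{\rm div}\mathbf{u}\|^2\lesssim\|\nabla^{k+1}\mathbf{u}\|^2$, which is exactly the non-small $\|\nabla^{k+1}\mathbf{u}\|^2$ on the right of \eqref{hr326}.

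It then remains to control three groups of terms: the viscous terms $\f{2}{\bar{\rho}}\int_{\mathbb{T}^3}{\rm div}[\nu(\phi)\mathbf{D}(\nabla^k\mathbf{u})]\cdot\nabla^{k+1}\sigma\,d\mathbf{x}$ and its $\lambda$-analogue; the capillary coupling $-\f{1}{\bar{\rho}}\int_{\mathbb{T}^3}\nabla^k[\nabla\phi\Delta\phi]\cdot\nabla^{k+1}\sigma\,d\mathbf{x}$; and the genuinely nonlinear contributions $\nabla^k\mathbf{g}_2$, $\nabla^{k+1}g_1$ together with the commutators between $\nabla^k$ and the variable coefficients $\nu(\phi),\lambda(\phi)$. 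For the capillary term I would expand by Leibniz, bound it through H\"older and the Moser inequality \eqref{hg22} by $\lesssim\eta_1\|\nabla^{k+2}\phi\|\,\|\nabla^{k+1}\sigma\|$ using the smallness of $\|\nabla\phi\|_{H^2}$ from \eqref{h310}, and then apply Young together with the Poincar\'e-type bound \eqref{hr317} to promote $\|\nabla^{k+2}\phi\|$ to $\|\nabla^{k+3}\phi\|$, yielding $\eta_1\|\nabla^{k+3}\phi\|^2$. For the nonlinear group, each summand of $g_1,\mathbf{g}_2$ carries at least one small factor ($\sigma$, $\mathbf{u}$, $h_i(\sigma)$ or $\nabla\phi$), so the combination of \eqref{hg20}, \eqref{hg21}, \eqref{hr317} and \eqref{h310} produces $\eta_1$-small bounds, the $\eta_1\|\nabla^{k+1}\sigma\|^2$ portion being swallowed by the left-hand side.

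The hard part is the leading viscous term. It cannot be integrated by parts against $\nabla^{k+1}\sigma$, because transferring a derivative onto the density would create an uncontrolled $\|\nabla^{k+2}\sigma\|$; keeping it in place forces the estimate $\lesssim\|\nabla^{k+2}\mathbf{u}\|\,\|\nabla^{k+1}\sigma\|$, and hence the $\|\nabla^{k+2}\mathbf{u}\|^2$ contribution on the right of \eqref{hr326}, which sits one derivative above the velocity dissipation available at the present level. This is harmless for the global scheme: when the cross term is added to the Lyapunov functional $\mathcal{E}_l^m$ with a sufficiently small weight, this $\|\nabla^{k+2}\mathbf{u}\|^2$ is absorbed by the viscous dissipation $\|\nabla^{k+2}\mathbf{u}\|^2$ furnished by \eqref{hr316} at level $k+1$, while the gained $\f{1}{2}\f{p'(\bar{\rho})}{\bar{\rho}}\|\nabla^{k+1}\sigma\|^2$ supplies precisely the density dissipation absent from \eqref{hr316} and \eqref{hr329}. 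The dependence of $\nu,\lambda$ on $\phi$ enters only through the lower-order commutators, which fall into the nonlinear group above.
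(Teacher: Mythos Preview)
Your approach is the same as the paper's: both multiply $\eqref{h317}_2$ by $\nabla^{k+1}\sigma$, use $\eqref{h317}_1$ to convert $\int\nabla^k\mathbf{u}\cdot\nabla^{k+1}\sigma_t$ into $\bar\rho\|{\rm div}\nabla^k\mathbf{u}\|^2$ plus the $g_1$ remainder, and then estimate the resulting pieces---the paper's $I_5^1$ through $I_5^7$---via H\"older, the Moser inequality \eqref{hg22}, the chain-rule bound \eqref{hg21}, and the scale shift \eqref{hr317}, exactly along the lines you sketch for the capillary, nonlinear, and commutator groups.

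Your discussion of the leading viscous term is in fact sharper than the paper's presentation. The $l=0$ summand of $I_5^5$ is $\int\nu(\phi)\,{\rm div}\mathbf{D}(\nabla^k\mathbf{u})\cdot\nabla^{k+1}\sigma\,d\mathbf{x}\lesssim\|\nu(\phi)\|_{L^\infty}\|\nabla^{k+2}\mathbf{u}\|\,\|\nabla^{k+1}\sigma\|$, and $\|\nu(\phi)\|_{L^\infty}$ is bounded but not small; after Young one obtains an absorbable $\varepsilon\|\nabla^{k+1}\sigma\|^2$ plus $C_\varepsilon\|\nabla^{k+2}\mathbf{u}\|^2$ with a \emph{generic} constant, not the $\eta_1$ that the paper records for $I_5^5$ and in the statement \eqref{hr326}. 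You flag this correctly and also explain why it is harmless: in the assembly \eqref{h53}--\eqref{h55} the cross-term inequality enters with the small weight $\epsilon$, so $\epsilon C\|\nabla^{k+2}\mathbf{u}\|^2$ is absorbed by the viscous dissipation $\tfrac{\nu_0}{\bar\rho}\|\nabla^{k+2}\mathbf{u}\|^2$ supplied by \eqref{h52} at level $k+1$, just as an $\eta_1$-small term would have been. In short, you prove the version of \eqref{hr326} that is actually provable and that suffices for the rest of the argument.
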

\begin{proof} Multiplying  $\eqref{h317}_2$
by $\nabla^{k+1}\sigma$, and using $\eqref{h317}_1$, \eqref{h314} and \eqref{h315}, we have
\begin{equation}\label{3413}\f{d}{dt}\int_{\mathbb{T}^3} \nabla^{k} \mathbf{u}\cdot\nabla^{k+1}\sigma d\mathbf{x}+\f{p'(\bar{\rho})}{\bar{\rho}}\|\nabla^{k+1} \sigma\|^2-\bar{\rho}\int_{\mathbb{T}^3}({\rm
	div}\nabla^{k}\mathbf{u})^2d\mathbf{x}=I_{5},
\end{equation}where
\begin{equation}\label{hg327}\begin{aligned}
I_{5}&=\underline{\int_{\mathbb{T}^3} \nabla^{k}{\rm div}(\sigma\mathbf{u}) {\rm
		div}\nabla^{k}\mathbf{u}d\mathbf{x}}_{I_{5}^1}+\underline{\int_{\mathbb{T}^3} \nabla^{k}\lf[-
	(\mathbf{u},\nabla)\mathbf{u}+h_1(\sigma)\nabla\sigma\rg] \cdot \nabla^{k+1}\sigma d\mathbf{x}}_{I_{5}^2}\\
&\underline{-\int_{\mathbb{T}^3} \nabla^{k}\lf[2h_2(\sigma){\rm div}\lf(\nu(\phi)\mathbf{D}(\mathbf{u})\rg)+h_2(\sigma)\nabla\lf((\nu(\phi)+\lambda(\phi)) {\rm div}\mathbf{u}\rg)\rg] \cdot\nabla^{k+1}\sigma d\mathbf{x}}_{I_{5}^3}
\\
&\underline{-\frac{1}{\bar\rho}\int_{\mathbb{T}^3} \nabla^{k}\lf[\nabla\phi\Delta\phi\rg]\cdot\nabla^{k+1}\sigma d\mathbf{x}}_{I_{5}^4}
+\f{2}{\bar{\rho}}\underline{\int_{\mathbb{T}^3} {\rm div} \nabla^{k}\lf[\nu(\phi)\mathbf{D}(\mathbf{u})\rg]\cdot
	\nabla^{k+1}\sigma d\mathbf{x}}_{I_{5}^5}\\
&+\f{1}{\bar{\rho}}\underline{\int_{\mathbb{T}^3} \nabla^{k+1}\lf[(\nu(\phi)+\lambda(\phi)) {\rm div}\mathbf{u}\rg]\cdot
	\nabla^{k+1}\sigma d\mathbf{x}}_{I_{5}^6}\\
&+\underline{\int_{\mathbb{T}^3} \nabla^{k}\lf[h_2(\sigma)\nabla\phi\Delta\phi\rg]\cdot\nabla^{k+1}\sigma d\mathbf{x}}_{I_{5}^7}
.
\end{aligned}
\end{equation}
We   estimate  $I_{5}$.
For the term $I_{5}^1$, we get
$$\begin{aligned}
I_{5}^1&\lesssim\|\nabla^{k+1}(\sigma\mathbf{u})\|\|\nabla^{k+1}\mathbf{u}\|\\
&\overset{\eqref{hg22}}\lesssim\lf(\|\sigma\|_{L^\infty}\|\nabla^{k+1}\mathbf{u}\|+\|\mathbf{u}\|_{L^\infty}\|\nabla^{k+1}\sigma\|\rg) \|\nabla^{k+1}\mathbf{u}\|\\
&\overset{\eqref{h310}, \eqref{hr317}}\lesssim\eta_1\lf(\|\nabla^{k+2}\mathbf{u}\|^2+\|\nabla^{k+1}\sigma\|^2\rg).
\end{aligned}$$
 Next, for   $I_{5}^2$, we have $$\begin{aligned}
I_{5}^2&\lesssim\|\nabla^{k}\lf[-
(\mathbf{u},\nabla)\mathbf{u}+h_1(\sigma)\nabla\sigma\rg]\|\|\nabla^{k+1}\sigma\|\\
&\overset{\eqref{hg22}}\lesssim \lf(\|\mathbf{u}\|_{L^\infty}\|\nabla^{k+1}\mathbf{u}\|+\|\nabla^{k}\mathbf{u}\|\|\nabla\mathbf{u}\|_{L^\infty}\rg)\|\nabla^{k+1}\sigma\|\\
&+\lf(\|h_1(\sigma)\|_{L^\infty}\|\nabla^{k+1}\sigma\|+\|\nabla^{k}h_1(\sigma)\|\|\nabla\sigma\|_{L^\infty}\rg)\|\nabla^{k+1}\sigma\|\\
&\overset{\eqref{h310}, \eqref{hg21}}\lesssim \eta_1\lf(\|\nabla^{k+1}\mathbf{u}\|+\|\nabla^{k}\mathbf{u}\|+\|\nabla^{k+1}\sigma\|+\|\nabla^{k}\sigma\|\rg)\|\nabla^{k+1}\sigma\| \\
&\overset{\eqref{hr317}}\lesssim \eta_1\lf(\|\nabla^{k+1}\sigma\|^2+\|\nabla^{k+2} \mathbf{u}\|^2\rg).
\end{aligned}
$$
By the same lines as above,  noticing that
$$\begin{aligned}
I_{5}^3&=-\sum_{0\leq l\leq k}
C_{k}^{l}\int_{\mathbb{T}^3} \nabla^lh_2(\sigma) {\rm div} \nabla^{k-l-1}\lf(\nu(\phi)\mathbf{D}(\mathbf{u})\rg)\cdot \nabla^{k+1}\sigma d\mathbf{x}\\
&-\sum_{0\leq l\leq k}
C_{k-1}^{l}\int_{\mathbb{T}^3} \nabla^lh_2(\sigma)  \nabla^{k-l}\lf((\nu(\phi)+\lambda(\phi)) {\rm div}\mathbf{u}\rg)\cdot \nabla^{k+1}\sigma d\mathbf{x}, \end{aligned}$$
we have
$$
I_5^3\lesssim \eta_1\lf(\|\nabla^{k+1}\sigma\|^2+\|\nabla^{k+2} \mathbf{u}\|^2+\|\nabla^{k+3}\phi\|^2\rg).
$$

Also, for $I_{5}^4$, we have
$$
\begin{aligned}
I_5^4&=-\sum_{0\leq l\leq k}
C_{k}^{l}\int_{\mathbb{T}^3} \nabla^{l+1}\phi\nabla^{k-l}\Delta\phi\cdot \nabla^{k+1}\sigma d\mathbf{x}\\
&\lesssim \|\nabla\phi\|_{L^\infty}\|\nabla^{k+2}\phi\|\|\nabla^{k+1}\sigma \|+\sum_{1\leq l\leq k} \|\nabla^{l+1}\phi\|_{L^6}\|\nabla^{k-l+2}\phi\|_{L^3}\|\nabla^{k+1}\sigma \|\\
&\overset{\eqref{h310},\eqref{hr317}}\lesssim \eta_1\lf(\|\nabla^{k+3}\phi\|^2+\|\nabla^{k+1}\sigma \|^2\rg).\end{aligned}$$
For  $I_{5}^5$, we have
 $$ \begin{aligned}
I_5^5&=\int_{\mathbb{T}^3} {\rm div} \nabla^{k}\lf[\nu(\phi)\mathbf{D}(\mathbf{u})\rg]\cdot
\nabla^{k+1}\sigma d\mathbf{x}\\
&=\sum_{0\leq l\leq k}
C_{k+1}^{l}\int_{\mathbb{T}^3} \lf(\nabla^{l}\nu(\phi) {\rm div}\mathbf{D} (\nabla^{k-l}\mathbf{u})+\nabla^{l+1}\nu(\phi)\mathbf{D} (\nabla^{k-l}\mathbf{u})\rg) \cdot
\nabla^{k+1}\sigma d\mathbf{x}\\
&\overset{\eqref{hg21}}\lesssim \sum_{1\leq l\leq k} \lf(\|\nabla^{l}\phi\|_{L^3}\|\nabla^{k-l+2}\mathbf{u}\|_{L^6}+\|\nabla^{l+1}\phi\|_{L^3}\|\nabla^{k-l+1}\mathbf{u}\|_{L^6}\rg)\|\nabla^{k+1}\sigma\|\\
&+\lf(\|\nu(\phi)\|_{L^\infty}\|\nabla^{k+2}\mathbf{u}\|+\|\nabla\nu(\phi)\|_{L^3}\|\nabla^{k-l+1}\mathbf{u}\|_{L^6}\rg)\|\nabla^{k+1}\sigma\|\end{aligned}$$
 $$ \overset{\eqref{h310},\eqref{hr317}}\lesssim \eta_1\lf(\|\nabla^{k+1}\sigma\|^2+\|\nabla^{k+2} \mathbf{u}\|^2\rg).$$
By the same lines as in   $I_{5}^5$ and $I_{5}^4$, we have
$$I_5^6\lesssim \eta_1\lf(\|\nabla^{k+1}\sigma\|^2+\|\nabla^{k+2} \mathbf{u}\|^2\rg),\qquad I_5^7\lesssim \eta_1\lf(\|\nabla^{k+3}\phi\|^2+\|\nabla^{k+1}\sigma \|^2\rg).$$
Using  \eqref{hg327} and the estimates for $I_{5}^j(j=1,\cdots,7)$, we obtain  \eqref{hr326} from   \eqref{3413}.
The proof of Lemma \ref{lem35} is completed.\end{proof}

\vspace{3ex}To get the decay of the solution, we need the following evolution of the negative Sobolev norms of the solution to the system \eqref{h21}.
\begin{lemma}\label{lem41} Under the  assumption \eqref{h310}, it holds   that
	\begin{equation}
	\label{hr41}\begin{aligned}
&\f{d}{dt}\lf(\f{p'(\bar{\rho})}{\bar{\rho}^2}\|\Lambda^{-s} \sigma\|^2+\|\Lambda^{-s} \mathbf{u}\|^2+\|\Lambda^{-s} \nabla\phi\|^2\rg)\\
&
\lesssim \lf(\|\nabla(\sigma,  \mathbf{u})\|_{H^2}^2+\|\nabla\phi\|_{H^3}^2\rg)\lf(\|\Lambda^{-s} \sigma\|+\|\Lambda^{-s} \mathbf{u}\|+\|\Lambda^{-s} \nabla\phi\|\rg),\ \ 	\mathrm{for} s\in (0, \f{3}{2}).\end{aligned}
		\end{equation}
\end{lemma}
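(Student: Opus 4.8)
The plan is to test each equation with the corresponding negative‑order quantity. Concretely, I would apply $\Lambda^{-s}$ to $\eqref{h313}_1$, to $\eqref{h313}_2$, and to $\nabla$ of \eqref{h3280}, then pair the results in $L^2(\mathbb{T}^3)$ against $\f{p'(\bar\rho)}{\bar\rho^2}\Lambda^{-s}\sigma$, $\Lambda^{-s}\mathbf{u}$ and $\Lambda^{-s}\nabla\phi$ respectively, and add the three identities. Since $\Lambda^{-s}$ is a Fourier multiplier it commutes with all derivatives and is self‑adjoint on $\mathbb{T}^3$, so the two linear coupling terms arising from $\f{p'(\bar\rho)}{\bar\rho}\nabla\sigma$ in $\eqref{h313}_2$ and from $\bar\rho\,{\rm div}\,\mathbf{u}$ in $\eqref{h313}_1$ combine, after one integration by parts, to $\f{p'(\bar\rho)}{\bar\rho}\lf(\int\Lambda^{-s}\nabla\sigma\cdot\Lambda^{-s}\mathbf{u}+\int\Lambda^{-s}{\rm div}\,\mathbf{u}\,\Lambda^{-s}\sigma\rg)=0$. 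This yields $\f{d}{dt}$ of the left side of \eqref{hr41} together with the dissipative contributions, and leaves the nonlinear and variable–coefficient terms to be estimated.

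The next step is to extract the leading dissipation. Because $\nu(\phi),\lambda(\phi)$ and $\f1\rho$ are not constant, I would split $\nu(\phi)=\bar\nu+(\nu(\phi)-\bar\nu)$, $\lambda(\phi)=\bar\lambda+(\lambda(\phi)-\bar\lambda)$ and $\f1\rho=\f1{\bar\rho}+h_2(\sigma)$, where $\bar\nu=\bbint_{\mathbb{T}^3}\nu(\phi)\,d\mathbf{x}\ge\nu_0>0$ and similarly for $\bar\lambda$. The space–constant mean pieces commute with $\Lambda^{-s}$, so after one integration by parts they produce the nonnegative terms $\f{2\bar\nu}{\bar\rho}\|\Lambda^{-s}\mathbf{D}(\mathbf{u})\|^2$ and $\f{\bar\nu+\bar\lambda}{\bar\rho}\|\Lambda^{-s}{\rm div}\,\mathbf{u}\|^2$ from the momentum equation, while the biharmonic part of \eqref{h3280} contributes $\f1{\bar\rho^2}\|\Lambda^{-s}\nabla\Delta\phi\|^2$ and the second–order term $\f{3\phi^2-1}\rho\Delta\phi$ contributes $\f2{\bar\rho}\|\Lambda^{-s}\Delta\phi\|^2$ (using $3\phi^2-1=2+3(\phi^2-1)$, exactly the coercive combination of Lemma \ref{lem33}). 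All of these are kept on the left; the small‑coefficient corrections proportional to $\|\phi^2-1\|_{L^\infty}$ that appear are absorbed into them thanks to the smallness of $\|\phi^2-1\|$ and the bound $\|\phi\|_{L^\infty}\lesssim1$ from Lemma \ref{lem31}. As the net dissipation is nonnegative it is finally discarded, which is why no dissipative norm survives on the left of \eqref{hr41}.

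It then remains to bound every product/fluctuation term by $(\|\nabla(\sigma,\mathbf{u})\|_{H^2}^2+\|\nabla\phi\|_{H^3}^2)(\|\Lambda^{-s}\sigma\|+\|\Lambda^{-s}\mathbf{u}\|+\|\Lambda^{-s}\nabla\phi\|)$. The model is the convection estimate displayed in the introduction: one writes $\mathbf{u}=\mathbf{u}'+(\mathbf{u}-\mathbf{u}')$ with $\mathbf{u}'=\bbint\mathbf{u}\,d\mathbf{x}$, observes that the constant–mean transport part integrates to zero, and controls the fluctuation by Lemma \ref{lem25} ($\|\Lambda^{-s}F\|\lesssim\|F\|_{L^{6/(3+2s)}}$), then by Hölder with exponent $\f{3}{s}$, and finally by the Poincaré inequality \eqref{h45} to replace $\|\mathbf{u}-\mathbf{u}'\|_{L^{3/s}}$ with $\|\nabla\mathbf{u}\|_{L^{3/s}}\lesssim\|\nabla\mathbf{u}\|_{H^2}$. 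The same scheme applies to every remaining term: the pressure/convection nonlinearities $g_1,\mathbf{g}_2$; the variable–viscosity fluctuations (where $\nu(\phi)-\bar\nu$ has zero mean, so \eqref{h45} gives $\|\nu(\phi)-\bar\nu\|_{L^{3/s}}\lesssim\|\nabla\phi\|_{L^{3/s}}$); the capillary coupling $\f1{\bar\rho}\nabla\phi\Delta\phi$ (estimated directly by $\|\nabla\phi\|_{L^{3/s}}\|\Delta\phi\|\lesssim\|\nabla\phi\|_{H^2}^2$, since $\nabla\phi$ is already small); the transport term $\mathbf{u}\cdot\nabla\phi$ in \eqref{h3280}; and the Cahn–Hilliard nonlinearities $\f{3(\phi^2-1)}\rho\Delta\phi$ and $\f{6\phi}\rho(\nabla\phi)^2$. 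Note that $\sigma$ has zero mean by the definition of $\bar\rho$ and mass conservation, so \eqref{h45} applies to it without subtraction. Summing and discarding the nonnegative dissipation then yields \eqref{hr41}.

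I expect the main obstacle to be the fourth–order Cahn–Hilliard part of \eqref{h3280}, in particular the strongly nonlinear term $\f{3\phi^2-1}\rho\Delta\phi$: its leading constant piece must be recognised as dissipation while its fluctuation $\f{3(\phi^2-1)}\rho\Delta\phi$ has to be handled using the smallness of $\|\phi^2-1\|$ rather than of $\|\phi\|$, so that the bound lands precisely in the required (high–order)$^2\times$(negative) form. The variable viscosity is the second delicate point: it forces the mean–subtraction device on $\nu(\phi),\lambda(\phi)$ throughout, because the constant–coefficient energy identity is unavailable and the negative power of $\Lambda$ forbids testing the second–order viscous operator against $\Lambda^{-s}\mathbf{u}$ directly.
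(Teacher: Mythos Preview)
Your proposal is correct and follows essentially the same approach as the paper, with only cosmetic differences: the paper tests \eqref{h3280} against $-\Lambda^{-s}\Delta\phi$ (equivalent to your $\Lambda^{-s}\nabla$-version after one integration by parts) and subtracts $\nu'=\nu\big(\bbint\phi\,d\mathbf{x}\big)$ rather than your $\bar\nu=\bbint\nu(\phi)\,d\mathbf{x}$ in the viscous splitting. Your identification of the two delicate points---the coercive handling of $\f{3\phi^2-1}{\rho}\Delta\phi$ via the smallness of $\phi^2-1$ and the mean-subtraction device for the variable viscosity combined with Poincar\'e and Lemma~\ref{lem25}---matches exactly what the paper does.
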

\begin{proof}
Applying $\Lambda^{-s}$ to $\eqref{h313}_1$,  $\eqref{h313}_2$ and \eqref{h3280}, multiplying the resulting identities by $\f{P'(\bar{\rho})}{\bar{\rho}^2}\Lambda^{-s}\sigma$, $\Lambda^{-s}\mathbf{u}$  and $-\Lambda^{-s}\Delta\phi$, respectively, summing up and then integrating by parts, we deduce that
	\begin{equation}
\label{hr44}
\begin{aligned}
&\f{1}{2}\f{d}{dt}\lf(\f{p'(\bar{\rho})}{\bar{\rho}^2}\|\Lambda^{-s} \sigma\|^2+\|\Lambda^{-s} \mathbf{u}\|^2+\|\Lambda^{-s} \nabla\phi\|^2\rg)+\f{\nu'}{\bar{\rho}}\|\Lambda^{-s} \nabla\mathbf{u}\|^2\\
&+\f{\nu'+\lambda'}{\bar{\rho}}\|\Lambda^{-s} {\rm div}\mathbf{u}\|^2+\f{1}{\bar{\rho}^2}\|\Lambda^{-s}\nabla \Delta\phi\|^2\\
&=-\underline{\f{2}{\bar{\rho}}\int_{\mathbb{T}^3} \Lambda^{-s}{\rm div}\lf[(\nu(\phi)-\nu')\mathbf{D}(\mathbf{u})\rg]\Lambda^{-s}\mathbf{u}d\mathbf{x}}_{I_6}\\&
-\underline{\f{1}{\bar{\rho}}\int_{\mathbb{T}^3} \Lambda^{-s}\nabla\lf[((\nu(\phi)-\nu')+(\lambda(\phi)-\lambda') {\rm div}\mathbf{u}\rg]\Lambda^{-s}\mathbf{u}d\mathbf{x}}_{I_7}\\
&
-\f{p'(\bar{\rho})}{\bar{\rho}^2}\underline{\int_{\mathbb{T}^3} \Lambda^{-s}\lf(\sigma{\rm div}\mathbf{u}\!+\!\nabla \sigma\!\cdot\!\mathbf{u}\rg)\Lambda^{-s}\sigma d\mathbf{x}}_{I_8}\!-\!\underline{\int_{\mathbb{T}^3} \Lambda^{-s}\lf[(\mathbf{u}\cdot\nabla)\mathbf{u}\!-\!h_1(\sigma)\nabla\sigma\rg]\cdot \Lambda^{-s}\mathbf{u}  d\mathbf{x}}_{I_9}\\
&-\underline{\int_{\mathbb{T}^3} \Lambda^{-s}\lf[2h_2(\sigma){\rm div}\lf[\nu(\phi)\mathbf{D}(\mathbf{u})\rg]+h_2(\sigma)\nabla\lf[(\nu(\phi)+\lambda(\phi)) {\rm div}\mathbf{u}\rg]\rg]\cdot \Lambda^{-s}\mathbf{u} d\mathbf{x}}_{I_{10}}\\
&-\underline{\frac{1}{\bar\rho}\int_{\mathbb{T}^3} \Lambda^{-s}\lf(\nabla\phi\Delta\phi\rg)\cdot\Lambda^{-s}\mathbf{u} d\mathbf{x}}_{I_{11}}+\underline{\int_{\mathbb{T}^3} \Lambda^{-s}\nabla\lf( \f{3\phi^2-1}{\rho}\Delta\phi+\f{6\phi}{\rho}(\nabla\phi)^2\rg)\Lambda^{-s}\nabla \phi d\mathbf{x}}_{I_{12}}\\
&+\underline{\int_{\mathbb{T}^3} \Lambda^{-s}\nabla\lf(-\mathbf{u}\cdot\nabla\phi+h_3(\sigma)\Delta^2\phi+\f{2}{\rho^3}\Delta\nabla\phi\cdot\nabla\sigma+\f{\Delta\phi}{\rho}{\rm div}(\f{\nabla\sigma}{\rho^2})\rg)\Lambda^{-s}\nabla \phi d\mathbf{x}}_{I_{13}}\\
&+\underline{\int_{\mathbb{T}^3} \Lambda^{-s}\lf(h_2(\sigma)\nabla\phi\Delta\phi\rg)\cdot\Lambda^{-s}\mathbf{u} d\mathbf{x}}_{I_{14}},
\end{aligned}
\end{equation}where
$$\nu'=\nu\lf(\bbint_{\mathbb{T}^3}\phi d\mathbf{x}\rg),\,\,\, \lambda'=\lambda\lf(\bbint_{\mathbb{T}^3} \phi d\mathbf{x}\rg)\,\,\, \text{and}\,\,\,h_3(\sigma)=\f{1}{\bar{\rho}^2}-\f{1}{\rho^2}.$$
In order to estimate the nonlinear terms in the right-hand side  of \eqref{hr44}, we shall use the estimate \eqref{hg25}. If $s\in (0, \f{3}{2})$, then $\f{1}{2}+\f{s}{3}<1$ and  we have
$$\begin{aligned}
I_{6}&\lesssim \|\Lambda^{-s}{\rm div}\lf[(\nu(\phi)-\nu')\mathbf{D}(\mathbf{u})\rg]\|\|\Lambda^{-s}\mathbf{u}\|
\\&\overset{\eqref{hg25}}\lesssim \|{\rm div}\lf[(\nu(\phi)-\nu')\mathbf{D}(\mathbf{u})\rg]\|_{L^{\f{1}{\f{1}{2}+\f{s}{3}}}}\|\Lambda^{-s}\mathbf{u}\|\\
&\lesssim \|{\rm div}[\nu(\phi)-\nu']\|_{L^\f{3}{s}}\|\nabla\mathbf{u}\|\|\Lambda^{-s}\mathbf{u}\|\\
&\lesssim \lf(\|\nabla \nu(\phi)\|_{L^\f{3}{s}}\|\nabla\mathbf{u}\|+{\Big \|}\phi-\bbint_{\mathbb{T}^3} \phi d\mathbf{x}{\Big \|}_{L^\f{3}{s}}\|\nabla^2\mathbf{u}\|\rg)\|\Lambda^{-s}\mathbf{u}\|
\\&\overset{\eqref{hr33}, \eqref{h45}}\lesssim \|\nabla\phi\|_{L^\f{3}{s}}\|\nabla\mathbf{u}\|\|\Lambda^{-s}\mathbf{u}\|
\overset{\eqref{hh45}}\lesssim \|\nabla\phi\|_{H^2} \|\nabla\mathbf{u}\|\|\Lambda^{-s}\mathbf{u}\|
\\&\lesssim\lf(\|\nabla\phi\|_{H^2}^2+\|\nabla\mathbf{u}\|^2\rg)\|\Lambda^{-s}\mathbf{u}\|,
\end{aligned}$$
where we used
 the Sobolev inequality
\begin{equation}\label{hh45}\|w\|_{L^p}\lesssim \|w\|_{H^2}\quad  \text{for all}\,\,\, w\in H^{2}(\mathbb{T}^3),\,\, 1\leq p\leq\infty. \end{equation}
Similarly, we  can prove that
$$I_7\lesssim\lf(\|\nabla\phi\|_{H^2}^2+\|\nabla\mathbf{u}\|^2\rg)\|\Lambda^{-s}\mathbf{u}\|.$$
To estimate $I_8$ and $I_9$, we rewrite them respectively as
$$\begin{aligned}
I_8&=\int_{\mathbb{T}^3} \Lambda^{-s}\lf[\sigma{\rm div}\mathbf{u}+\nabla \sigma\cdot(\mathbf{u}-\mathbf{u}')\rg]\Lambda^{-s}\sigma d\mathbf{x}\\
\end{aligned}$$ and $$\begin{aligned}
I_9&=\int_{\mathbb{T}^3} \Lambda^{-s}\lf[(\mathbf{u}-\mathbf{u}',\nabla)\mathbf{u}-h_1(\sigma)\nabla\sigma\rg]\cdot \Lambda^{-s}\mathbf{u}  d\mathbf{x},\\
\end{aligned}$$
where
$\displaystyle\mathbf{u}'=\bbint_{\mathbb{T}^3} \mathbf{u} d\mathbf{x}$ and we used
$$ \int_{\mathbb{T}^3} \mathbf{u}'\cdot\nabla \Lambda^{-s}\sigma\Lambda^{-s}\sigma d\mathbf{x}=0, \quad \int_{\mathbb{T}^3} (\mathbf{u}', \nabla) \Lambda^{-s}\mathbf{u}\cdot\Lambda^{-s}\mathbf{u} d\mathbf{x}=0.$$
Then, we have
$$\begin{aligned}
I_{8}&\lesssim \|\Lambda^{-s}\lf[\sigma{\rm div}\mathbf{u}+\nabla \sigma\cdot(\mathbf{u}-\mathbf{u}')\rg]\|\|\Lambda^{-s}\sigma\|
\\&\overset{\eqref{hg25}}\lesssim \|\sigma{\rm div}\mathbf{u}+\nabla \sigma\cdot(\mathbf{u}-\mathbf{u}')\|_{L^{\f{1}{\f{1}{2}+\f{s}{3}}}}\|\Lambda^{-s}\sigma\|\\
&\lesssim\lf(\|\sigma\|_{L^\f{3}{s}}\|\nabla\mathbf{u}\|+\|\nabla\sigma\|_{L^\f{3}{s}}\|\mathbf{u}-\mathbf{u}'\|\rg) \|\Lambda^{-s}\sigma\|
\\&\overset{\eqref{hh45}, \eqref{h45}}\lesssim \|\sigma\|_{H^3} \|\nabla\mathbf{u}\|\|\Lambda^{-s}\sigma\|
\overset{\eqref{h45}}\lesssim\lf(\|\nabla\sigma\|_{H^2}^2+\|\nabla\mathbf{u}\|^2\rg)\|\Lambda^{-s}\sigma\|,
\end{aligned}$$ and
$$\begin{aligned}
I_{9}&\lesssim \|\Lambda^{-s}\lf[(\mathbf{u}-\mathbf{u}',\nabla)\mathbf{u}-h_1(\sigma)\nabla\sigma\rg] \|\|\Lambda^{-s}\mathbf{u}\|
\\&\overset{\eqref{hg25}}\lesssim \|(\mathbf{u}-\mathbf{u}',\nabla)\mathbf{u}-h_1(\sigma)\nabla\sigma\|_{L^{\f{1}{\f{1}{2}+\f{s}{3}}}}\|\Lambda^{-s}\mathbf{u}\|\\
&\lesssim \lf(\|\mathbf{u}-\mathbf{u}'\|_{L^\f{3}{s}}\|\nabla\mathbf{u}\|+\|\sigma\|_{L^\f{3}{s}}\|\nabla\sigma\|\rg)\|\Lambda^{-s}\mathbf{u}\|
\\&\overset{\eqref{h45}, \eqref{hh45}}\lesssim \lf(\|\nabla\mathbf{u}\|_{H^2}^2+\|\sigma\|_{H^2}^2\rg) \|\Lambda^{-s}\mathbf{u}\|\\&
\overset{\eqref{h45}}\lesssim \lf(\|\nabla\mathbf{u}\|_{H^2}^2+\|\nabla\sigma\|_{H^1}^2\rg)\|\Lambda^{-s}\mathbf{u}\|.
\end{aligned}$$
Also,  we have
$$\begin{aligned}
I_{10}&\lesssim \lf(\|\Lambda^{-s}[h_2(\sigma){\rm div}\lf[\nu(\phi)\mathbf{D}(\mathbf{u})\rg]]\|+\|\Lambda^{-s}[h_2(\sigma)\nabla\lf[(\nu(\phi)+\lambda(\phi)) {\rm div}\mathbf{u}\rg]]\|\rg)\|\Lambda^{-s}\mathbf{u}\|
\\&\overset{\eqref{hg25}}\lesssim \lf(\|h_2(\sigma){\rm div}\lf[\nu(\phi)\mathbf{D}(\mathbf{u})\rg]\|_{L^{\f{1}{\f{1}{2}+\f{s}{3}}}}+\|h_2(\sigma)\nabla\lf[(\nu(\phi)+\lambda(\phi)) {\rm div}\mathbf{u}\rg]\|_{L^{\f{1}{\f{1}{2}+\f{s}{3}}}}\rg)\|\Lambda^{-s}\mathbf{u}\|\\
&\lesssim \|h_2(\sigma)\|_{L^\f{3}{s}}\lf(\|{\rm div}\lf[\nu(\phi)\mathbf{D}(\mathbf{u})\rg]\|+\|\nabla\lf[(\nu(\phi)+\lambda(\phi)) {\rm div}\mathbf{u}\rg]\|\rg)\|\Lambda^{-s}\mathbf{u}\|
\\&\overset{\eqref{hg21}, \eqref{h310}}\lesssim \|\sigma\|_{L^\f{3}{s}}\|\nabla\mathbf{u}\|_{H^1}\|\Lambda^{-s}\mathbf{u}\|\\&
\overset{\eqref{hh45}, \eqref{h45}}\lesssim\lf(\|\nabla\sigma\|_{H^1}^2+\|\nabla\mathbf{u}\|_{H^1}^2\rg)\|\Lambda^{-s}\mathbf{u}\|
\end{aligned}$$  and
$$\begin{aligned}
I_{11}&\lesssim \|\Lambda^{-s}\lf(\nabla\phi\Delta\phi\rg)\| \|\Lambda^{-s}\mathbf{u}\|
\\&\overset{\eqref{hg25}}\lesssim \|\nabla\phi\Delta\phi\|_{L^{\f{1}{\f{1}{2}+\f{s}{3}}}}\|\Lambda^{-s}\mathbf{u}\| \lesssim \|\nabla\phi\|_{L^\f{3}{s}}\|\nabla^2\phi\|\|\Lambda^{-s}\mathbf{u}\|
\\&
\overset{\eqref{hh45}}
\lesssim\|\nabla\phi\|_{H^2}^2\|\Lambda^{-s}\mathbf{u}\|.
\end{aligned}$$
The estimate on $I_{12}$ is more
 subtle. To this end, we rewrite it as
\begin{equation}\label{hr45}
\begin{aligned}
I_{12}=&\underline{\int_{\mathbb{T}^3} \Lambda^{-s}\nabla\lf[(3\phi^2-1)\lf( \f{1}{\rho}-\f{1}{\bar{\rho}}\rg)\Delta\phi\rg]\Lambda^{-s}\nabla \phi d\mathbf{x}}_{I_{12}^1}\\
&+\f{3}{\bar{\rho}}\underline{\int_{\mathbb{T}^3} \Lambda^{-s}\nabla\lf( (\phi^2-\overline{\phi^{2}})\Delta\phi\rg)\Lambda^{-s}\nabla \phi d\mathbf{x}}_{I_{12}^2}
	\\&-\f{3\overline{\phi^{2}}-1}{\bar{\rho}}\underline{\int_{\mathbb{T}^3} |\Lambda^{-s}\nabla \phi|^2 d\mathbf{x}}_{I_{12}^3}+6\underline{\int_{\mathbb{T}^3} \Lambda^{-s}\nabla\lf(\f{\phi}{\rho}(\nabla\phi)^2\rg)\Lambda^{-s}\nabla \phi d\mathbf{x}}_{I_{12}^4},
\end{aligned} \end{equation}  where $\displaystyle\overline{\phi^{2}}=\bbint_{\mathbb{T}^3} \phi^2 d\mathbf{x}.$
 Then, we have
$$\begin{aligned}
I_{12}^1&\lesssim {\Big\|}\Lambda^{-s}\nabla\lf[(3\phi^2-1)\lf( \f{1}{\rho}-\f{1}{\bar{\rho}}\rg)\Delta\phi\rg]{\Big\|} \|\Lambda^{-s}\nabla\phi\|\\
& \overset{\eqref{hg25}}\lesssim {\Big\|}\nabla\lf[(3\phi^2-1)\lf( \f{1}{\rho}-\f{1}{\bar{\rho}}\rg)\Delta\phi\rg]{\Big\|}_{L^{\f{1}{\f{1}{2}+\f{s}{3}}}}\|\Lambda^{-s}\nabla\phi\|\\
& \lesssim \lf(\|\sigma\|_{L^\f{3}{s}}\|\nabla^3\phi\|+\|\nabla\phi\|_{L^\f{3}{s}}\|\nabla^2\phi\|+\|\nabla\sigma\|_{L^\f{3}{s}}\|\nabla^2\phi\|\rg) \|\Lambda^{-s}\nabla\phi\|
\\
&\overset{\eqref{hh45}, \eqref{h45}}
\lesssim\lf(\|\nabla\sigma\|_{H^2}^2+\|\nabla\phi\|_{H^2}^2\rg)\|\Lambda^{-s}\nabla\phi\|,
\end{aligned}$$
and
$$\begin{aligned}
I_{12}^2&\lesssim {\Big\|}\Lambda^{-s}\nabla\lf( (\phi^2-\overline{\phi^{2}})\Delta\phi\rg){\Big\|} \|\Lambda^{-s}\nabla\phi\|\\&
 \overset{\eqref{hg25}}\lesssim {\Big\|}\nabla\lf((\phi^2-\overline{\phi^{2}})\Delta\phi\rg){\Big\|}_{L^{\f{1}{\f{1}{2}+\f{s}{3}}}}\|\Lambda^{-s}\nabla\phi\|\\
& \lesssim \lf(\|\phi^2-\overline{\phi^{2}}\|_{L^\f{3}{s}}\|\nabla^3\phi\|+\|\nabla\phi\|_{L^\f{3}{s}}\|\nabla^2\phi\|\rg) \|\Lambda^{-s}\nabla\phi\|
\\
&\overset{\eqref{hh45}, \eqref{h45}}
\lesssim \|\nabla\phi\|_{H^2}^2\|\Lambda^{-s}\nabla\phi\|,
\end{aligned}$$
moreover,
\begin{eqnarray*}
& \displaystyle I_{12}^3+\f{2}{\bar{\rho}}\int_{\mathbb{T}^3} |\Lambda^{-s}\nabla \phi|^2 d\mathbf{x} \\
& \displaystyle \leq \f{3}{\bar{\rho}}(1-\overline{\phi^{2}})\int_{\mathbb{T}^3} |\Lambda^{-s}\nabla \phi|^2 d\mathbf{x} \\
& \displaystyle \lesssim \eta_1\int_{\mathbb{T}^3} |\Lambda^{-s}\nabla \phi|^2 d\mathbf{x},
\end{eqnarray*}
 where we used
\begin{equation}\label{hr411}1-\overline{\phi^{2}}=\bbint_{\mathbb{T}^3}(1-\phi^2)d\mathbf{x}\lesssim \|1-\phi^2\|\lesssim \eta_1,\end{equation}
and
$$\begin{aligned}
I_{12}^4&\lesssim {\Big\|}\Lambda^{-s}\nabla\lf(\f{\phi}{\rho}(\nabla\phi)^2\rg){\Big\|} \|\Lambda^{-s}\nabla\phi\|
\overset{\eqref{hg25}}\lesssim {\Big\|}\nabla\lf(\f{\phi}{\rho}(\nabla\phi)^2\rg){\Big\|}_{L^{\f{1}{\f{1}{2}+\f{s}{3}}}}\|\Lambda^{-s}\nabla\phi\|\\
& \lesssim \|\nabla\phi\|_{L^\f{3}{s}} \lf(\|\nabla\phi\|+\|\nabla^2\phi\|\rg) \|\Lambda^{-s}\nabla\phi\|
\\
&\overset{\eqref{hh45}, \eqref{h45}}
\lesssim \|\nabla\phi\|_{H^2}^2\|\Lambda^{-s}\nabla\phi\|.
\end{aligned}$$  Therefore, we obtain from  \eqref{hr45} that
$$I_{12}\lesssim\lf(\|\nabla\sigma\|_{H^2}^2+\|\nabla\phi\|_{H^2}^2\rg)\|\Lambda^{-s}\nabla\phi\|.$$
Last, for $I_{13}$, noticing that
$\int_{\mathbb{T}^3} \mathbf{u}'\cdot \Lambda^{-s}\nabla\nabla\phi \Lambda^{-s}\nabla \phi d\mathbf{x}=0,$
 we have
$$\begin{aligned}
I_{13}&\lesssim \|\Lambda^{-s}\nabla\lf[(\mathbf{u}'-\mathbf{u})\cdot\nabla\phi+h_3(\sigma)\Delta^2\phi\rg]\| \|\Lambda^{-s}\nabla\phi\|\\
&+{\Big\|}\Lambda^{-s}\nabla\lf(\f{2}{\rho^3}\Delta\nabla\phi\cdot\nabla\sigma+\f{\Delta\phi}{\rho}{\rm div}(\f{\nabla\sigma}{\rho^2})\rg){\Big\|} \|\Lambda^{-s}\nabla\phi\|\\
&
\overset{\eqref{hg25}}\lesssim\|\nabla\lf[(\mathbf{u}'-\mathbf{u})\cdot\nabla\phi+h_3(\sigma)\Delta^2\phi\rg]\|_{L^{\f{1}{\f{1}{2}+\f{s}{3}}}} \|\Lambda^{-s}\nabla\phi\|\\
&+{\Big\|}\nabla\lf(\f{2}{\rho^3}\Delta\nabla\phi\cdot\nabla\sigma+\f{\Delta\phi}{\rho}{\rm div}(\f{\nabla\sigma}{\rho^2})\rg){\Big\|}_{L^{\f{1}{\f{1}{2}+\f{s}{3}}}} \|\Lambda^{-s}\nabla\phi\|\\
& \lesssim  \lf[(\|\mathbf{u}-\mathbf{u}'\|_{L^\f{3}{s}}+\|\nabla\sigma\|_{L^\f{3}{s}})\|\nabla^2\phi\| +\|\nabla\mathbf{u}\|_{L^\f{3}{s}}\|\nabla\phi\|+\|\sigma\|_{L^\f{3}{s}}\|\nabla^3\phi\|\rg]\|\Lambda^{-s}\nabla\phi\|\\
&+\lf[\|\nabla\sigma\|_{L^\f{3}{s}}\|\Delta\nabla\phi\|_{H^1}+\|\nabla\phi\|_{W^{1,\f{3}{s}}}\|\Delta\sigma\|_{H^1}\rg]\|\Lambda^{-s}\nabla\phi\|
\\
&\overset{\eqref{hh45}, \eqref{h45}}
\lesssim \lf(\|\nabla\mathbf{u}\|_{H^2}^2+\|\nabla\sigma\|_{H^2}^2+\|\nabla\phi\|_{H^3}^2\rg) \|\Lambda^{-s}\nabla\phi\|.
\end{aligned}$$
By the same lines as in $I_{11}$, we obtain
$$I_{14}\lesssim\|\nabla\phi\|_{H^2}^2\|\Lambda^{-s}\mathbf{u}\|.$$
Substituting the estimates on $I_{i} (i=6, \cdots, 14)$ into \eqref{hr44} yields \eqref{hr41}. The proof of Lemma \ref{lem41} is completed.\end{proof}

\begin{lemma}\label{lem42} Under the  assumption \eqref{h310}, it holds   that
	\begin{equation}
	\label{hr49}
\f{d}{dt}\|\Lambda^{-s} \lf(\phi^2-1\rg)\|^2
	\lesssim \lf(\|\nabla(\sigma,  \mathbf{u})\|_{H^2}^2+\|\nabla\phi\|_{H^3}^2\rg)\|\Lambda^{-s} \lf(\phi^2-1\rg)\|, \ \ \mathrm{for} \ s\in (0, \f{3}{2}).
	\end{equation}
\end{lemma}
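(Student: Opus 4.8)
The plan is to reproduce the negative-norm scheme of Lemma \ref{lem41}, now for the scalar unknown $\psi\overset{\mathrm{def}}=\phi^2-1$. First I would multiply the phase equation \eqref{h3280} by $2\phi$; using $2\phi\phi_t=\psi_t$ and $2\phi\,\mathbf{u}\cdot\nabla\phi=\mathbf{u}\cdot\nabla\psi$ this yields the evolution equation
\begin{equation}\label{hr330}
\psi_t+\mathbf{u}\cdot\nabla\psi=-\f{2\phi}{\rho}\Delta\lf(\f{\Delta\phi}{\rho}\rg)+\f{2\phi(3\phi^2-1)}{\rho}\Delta\phi+\f{12\phi^2}{\rho}(\nabla\phi)^2.
\end{equation}
Applying $\Lambda^{-s}$, multiplying by $\Lambda^{-s}\psi$ and integrating over $\mathbb{T}^3$ produces $\f12\f{d}{dt}\|\Lambda^{-s}\psi\|^2$ on the left together with the dissipation isolated below; the convective term is handled exactly as in Lemma \ref{lem41}, i.e.\ by subtracting the mean $\mathbf{u}'=\bbint_{\mathbb{T}^3}\mathbf{u}\,d\mathbf{x}$, since $\int_{\mathbb{T}^3}(\mathbf{u}',\nabla)\Lambda^{-s}\psi\,\Lambda^{-s}\psi\,d\mathbf{x}=0$, leaving only the commutator $\Lambda^{-s}[(\mathbf{u}-\mathbf{u}')\cdot\nabla\psi]$ to estimate.

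The next step is to extract the leading dissipation from the first two terms on the right of \eqref{hr330}. Replacing $\rho$ by $\bar\rho+\sigma$ and $3\phi^2-1$ by $2+3\psi$, and using the identities $\nabla\psi=2\phi\nabla\phi$, $\Delta\psi=2\phi\Delta\phi+2|\nabla\phi|^2$ together with
$$2\phi\,\Delta^2\phi=\Delta^2\psi-2(\Delta\phi)^2-4\nabla\phi\cdot\nabla\Delta\phi-2\Delta|\nabla\phi|^2,$$
the principal part reduces to $-\f{1}{\bar\rho^2}\Delta^2\psi+\f{2}{\bar\rho}\Delta\psi$ plus genuinely nonlinear remainders (quadratic in the derivatives of $\phi$, or carrying at least one factor of $\sigma$, $\nabla\sigma$, $h_2(\sigma)$, $h_3(\sigma)$ or $\psi$). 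After integration by parts these two model terms furnish the good dissipation $\f{1}{\bar\rho^2}\|\Lambda^{-s}\Delta\psi\|^2+\f{2}{\bar\rho}\|\Lambda^{-s}\nabla\psi\|^2$ on the left-hand side, which is precisely the structure announced in the Introduction. As in the treatment of $I_{12}$ of Lemma \ref{lem41}, I would track the mean by splitting $3\phi^2-1=(3\overline{\phi^2}-1)+3(\phi^2-\overline{\phi^2})$ and using \eqref{hr411}, so that the zeroth-mode bookkeeping of $\psi$ stays of size $\eta_1$.

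The remaining nonlinear terms are then estimated one by one with the same toolkit as in Lemma \ref{lem41}: the Riesz-potential bound \eqref{hg25} converts each $\|\Lambda^{-s}(\cdots)\|$ into an $L^{p}$ norm with $p=(\f12+\f s3)^{-1}\in(1,2)$; the product inside is split by H\"older so that one factor lies in $L^{3/s}$ --- controlled by the Sobolev embedding \eqref{hh45} and, when a mean is subtracted, by the Poincar\'e inequality \eqref{h45} (note $\bbint_{\mathbb{T}^3}\sigma\,d\mathbf{x}=0$) --- while the other stays in $L^2$. Combining this with Lemma \ref{lem22-1} for the composite coefficients and the a priori bounds \eqref{h310}, each term is dominated by $\big(\|\nabla(\sigma,\mathbf{u})\|_{H^2}^2+\|\nabla\phi\|_{H^3}^2\big)\|\Lambda^{-s}\psi\|$, and the few terms producing a factor $\|\Lambda^{-s}\nabla\psi\|$ or $\|\Lambda^{-s}\Delta\psi\|$ are absorbed into the dissipation by Young's inequality. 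Dropping the nonnegative dissipation on the left then gives \eqref{hr49}.

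I expect the main obstacle to be the biharmonic contribution $-\f{2\phi}{\rho}\Delta(\f{\Delta\phi}{\rho})$. Once the clean part $-\f{1}{\bar\rho^2}\Delta^2\psi$ is split off, there remains the genuinely fourth-order term $\sim 2\phi\,h_3(\sigma)\Delta^2\phi$; here \eqref{hg25} must be applied by placing $\Delta^2\phi$ in $L^2$ (bounded by $\|\nabla\phi\|_{H^3}$) and the coefficient $\phi\,h_3(\sigma)$ in $L^{3/s}$ (bounded by $\|\nabla\sigma\|_{H^2}$ via \eqref{hh45}, \eqref{h45} and $\|\phi\|_{L^\infty}\lesssim1$ from Lemma \ref{lem31}), which is exactly why $\|\nabla\phi\|_{H^3}$ --- rather than $\|\nabla\phi\|_{H^2}$ --- appears on the right-hand side of \eqref{hr49}. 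A secondary subtlety is that $\psi$ is not small merely through its gradient, so closing the coefficient terms in which $\psi$ itself occurs genuinely requires the smallness of $\|\phi^2-1\|$ established in Lemma \ref{lem31}.
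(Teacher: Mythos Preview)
Your proposal is correct and follows essentially the same route as the paper: derive the evolution equation \eqref{hr330} for $\psi=\phi^2-1$ by multiplying \eqref{h3280} by $2\phi$, extract the principal part $-\f{1}{\bar\rho^2}\Delta^2\psi+\f{2}{\bar\rho}\Delta\psi$ via the identities you wrote, and estimate the remaining terms with \eqref{hg25}, \eqref{h45}, \eqref{hh45}, the mean-subtraction trick for $\mathbf{u}$, and the splitting $3\phi^2-1=(3\overline{\phi^2}-1)+3(\phi^2-\overline{\phi^2})$ together with \eqref{hr411}. The paper organizes the right-hand side into five integrals $I_{15}$--$I_{19}$ and handles them exactly as you outline, including the absorption of the single $\eta_1\|\Lambda^{-s}\nabla\psi\|^2$ contribution into the dissipation.
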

\begin{proof} We rewrite \eqref{h3280} as
\begin{equation}\label{hr330}
\begin{aligned}
&\lf(\phi^2-1\rg)_t+ \mathbf{u}\cdot\nabla\lf(\phi^2-1\rg)-\f{2}{\bar{\rho}}\Delta(\phi^2-1)+\f{1}{\bar{\rho}^2}\Delta^2(\phi^2-1)\\
&=\f{12\phi^3}{\rho}|\nabla\phi|^2+\f{6\phi}{\rho}\lf(\phi^2-1\rg)\Delta\phi+4\phi\lf(\f{1}{\rho}-\f{1}{\bar{\rho}}\rg)\Delta\phi-\f{2}{\bar{\rho}}|\nabla\phi|^2\\
&+\f{2}{\bar{\rho}^2}\lf[|\nabla\phi|^2+2\nabla\phi\cdot\nabla\Delta\phi+\Delta(|\nabla\phi|^2)\rg]\\
&+2\phi\lf(\f{1}{\rho}-\f{1}{\bar{\rho}}\rg)\Delta\lf(\f{\Delta\phi}{\rho}\rg)+\f{2\phi}{\bar{\rho}}\Delta\lf[\lf(\f{1}{\rho}-\f{1}{\bar{\rho}}\rg)\Delta\phi\rg].
\end{aligned} \end{equation}
Then, applying $\Lambda^{-s}$ to \eqref{hr330} and multiplying the resulting identity by  $\Lambda^{-s}\lf(\phi^2-1\rg)$,  we deduce that
\begin{equation}
\label{hh44}
\begin{aligned}
&\f{1}{2}\f{d}{dt}\|\Lambda^{-s} \lf(\phi^2-1\rg)\|^2+\f{2}{\bar{\rho}}\|\Lambda^{-s} \nabla\lf(\phi^2-1\rg)\|^2+\f{1}{\bar{\rho}^2}\|\Lambda^{-s} \Delta\lf(\phi^2-1\rg)\|^2\\&
=
\underline{\int_{\mathbb{T}^3} \Lambda^{-s}\lf[\lf(\f{12\phi^3}{\rho}-\f{2}{\bar{\rho}}\rg)|\nabla\phi|^2+4\phi h_{2}(\sigma)\Delta\phi\rg]\Lambda^{-s}\lf(\phi^2-1\rg) d\mathbf{x}}_{I_{15}}\\&+\f{2}{\bar{\rho}^2}\underline{\int_{\mathbb{T}^3} \Lambda^{-s}\lf[|\nabla\phi|^2+2\nabla\phi\cdot\nabla\Delta\phi+\Delta(|\nabla\phi|^2)\rg]\Lambda^{-s}\lf(\phi^2-1\rg) d\mathbf{x}}_{I_{16}}
\\
&-2\underline{\int_{\mathbb{T}^3} \Lambda^{-s}\lf[\phi h_{2}(\sigma)\Delta\lf(\f{\Delta\phi}{\rho}\rg)+\f{\phi}{\bar{\rho}}\Delta\lf(h_{2}(\sigma)\Delta\phi\rg)\rg]\Lambda^{-s}\lf(\phi^2-1\rg) d\mathbf{x}}_{I_{17}}\\
&-\underline{\int_{\mathbb{T}^3} \Lambda^{-s}\lf[\mathbf{u}\cdot\nabla\lf(\phi^2-1\rg)\rg]\Lambda^{-s}\lf(\phi^2-1\rg) d\mathbf{x}}_{I_{18}}\\
&+6\underline{\int_{\mathbb{T}^3} \Lambda^{-s}\lf[\f{\phi}{\rho}\lf(\phi^2-1\rg)\Delta\phi\rg]\Lambda^{-s}\lf(\phi^2-1\rg) d\mathbf{x}}_{I_{19}},
\end{aligned}
\end{equation}
where
$h_2(\sigma)=\f{1}{\bar{\rho}}-\f{1}{\rho}$ (see \eqref{h315}).
In order to estimate the nonlinear terms in the right-hand side  of \eqref{hh44}, we shall use the estimate \eqref{hg25}. If $s\in (0, \f{3}{2})$, then $\f{1}{2}+\f{s}{3}<1$ and  we have
$$\begin{aligned}
I_{15}&\lesssim {\Big\|}\Lambda^{-s}\lf[\lf(\f{12\phi^3}{\rho}-\f{2}{\bar{\rho}}\rg)|\nabla\phi|^2+4\phi h_{2}(\sigma)\Delta\phi\rg]{\Big\|} \|\Lambda^{-s}\lf(\phi^2-1\rg) \|\\
&
\overset{\eqref{hg25}, \eqref{hr33}, \eqref{hg313}}\lesssim\lf(\|\nabla\phi\|_{L^{\f{3}{s}}}\|\nabla\phi\|+\|\Delta\phi\|\|\sigma\|_{L^{\f{3}{s}}}\rg) \|\Lambda^{-s}\lf(\phi^2-1\rg) \|
\\
&\overset{\eqref{hh45}, \eqref{h45}}
\lesssim \lf(\|\nabla\phi\|_{H^3}^2+\|\nabla\sigma\|_{H^2}^2\rg) \|\Lambda^{-s}\lf(\phi^2-1\rg)\|,
\end{aligned}$$
$$\begin{aligned}
I_{16}&\lesssim \|\Lambda^{-s}\lf[|\nabla\phi|^2+2\nabla\phi\cdot\nabla\Delta\phi+\Delta(|\nabla\phi|^2)\rg]\| \|\Lambda^{-s}\lf(\phi^2-1\rg) \|\\
&
\overset{\eqref{hg25}}\lesssim \lf[\|\nabla\phi\|_{L^{\f{3}{s}}}\lf(\|\nabla\phi\|+\|\nabla\Delta\phi\|\rg)+\|\Delta\phi\|_{L^{\f{3}{s}}}\|\Delta\phi\|\rg] \|\Lambda^{-s}\lf(\phi^2-1\rg) \|
\\
&\overset{ \eqref{h45}}
\lesssim \|\nabla\phi\|_{H^3}^2 \|\Lambda^{-s}\lf(\phi^2-1\rg)\|,
\end{aligned}$$
and
$$\begin{aligned}
I_{17}&\lesssim {\Big\|}\Lambda^{-s}\lf[\phi h_{2}(\sigma)\Delta\lf(\f{\Delta\phi}{\rho}\rg)+\f{\phi}{\bar{\rho}}\Delta\lf(h_{2}(\sigma)\Delta\phi\rg)\rg]{\Big\|} \|\Lambda^{-s}\lf(\phi^2-1\rg) \|\\
&
\overset{\eqref{hg25}, \eqref{hr33}, \eqref{hg313}}\lesssim\|\sigma\|_{L^{\f{3}{s}}}\lf({\Big\|}\Delta\lf(\f{\Delta\phi}{\rho}\rg){\Big\|}+\|\Delta^2\phi\|\rg) \|\Lambda^{-s}\lf(\phi^2-1\rg) \|\\
&+\lf(\|\nabla\sigma\|_{L^{\f{3}{s}}}
\|\nabla\Delta\phi\|+\|\Delta\sigma\|\|\Delta\phi\|_{L^{\f{3}{s}}}
\rg) \|\Lambda^{-s}\lf(\phi^2-1\rg) \|
\\
&\overset{\eqref{hh45}, \eqref{h45}}
\lesssim \lf(\|\nabla\phi\|_{H^3}^2+\|\nabla\sigma\|_{H^2}^2\rg) \|\Lambda^{-s}\lf(\phi^2-1\rg)\|.
\end{aligned}$$
For $I_{17}$, noticing that
$$\int_{\mathbb{T}^3} \mathbf{u}'\cdot \Lambda^{-s}\nabla\lf(\phi^2-1\rg) \Lambda^{-s}\lf(\phi^2-1\rg) d\mathbf{x}=0, $$
where
$\displaystyle\mathbf{u}'=\bbint_{\mathbb{T}^3} \mathbf{u} d\mathbf{x}$, we have
$$\begin{aligned}
I_{18}&=\int_{\mathbb{T}^3}  \Lambda^{-s}\lf[(\mathbf{u}-\mathbf{u}')\cdot\nabla\lf(\phi^2-1\rg)\rg] \Lambda^{-s}\lf(\phi^2-1\rg) d\mathbf{x}\\
&\lesssim \|\Lambda^{-s}\lf[(\mathbf{u}-\mathbf{u}')\cdot \nabla\lf(\phi^2-1\rg)\rg]\| \|\Lambda^{-s}\lf(\phi^2-1\rg) \|\\
&
\overset{\eqref{hg25}}\lesssim\|\mathbf{u}-\mathbf{u}'\|_{L^{\f{3}{s}}}\|\nabla\lf(\phi^2-1\rg)\| \|\Lambda^{-s}\lf(\phi^2-1\rg) \|\\
&\overset{\eqref{hh45}, \eqref{h45}}
\lesssim \lf(\|\nabla\mathbf{u}\|_{H^2}^2+\|\nabla\phi\|^2\rg) \|\Lambda^{-s}\lf(\phi^2-1\rg)\|.
\end{aligned}$$
Last,  noticing that
$$\phi\Delta\phi=\f{1}{2}\Delta\lf(\phi^2-1\rg)+|\nabla\phi|^2, $$
we rewrite $I_{19}$ as
\begin{equation}\label{hr412}\begin{aligned}
I_{19}&=\underline{-\f{\lf(\overline{\phi^{2}}-1\rg)}{2\bar{\rho}}\int_{\mathbb{T}^3} |\Lambda^{-s}\nabla\lf(\phi^2-1\rg)|^2 d\mathbf{x}}_{I^1_{19}}\\&+\underline{\int_{\mathbb{T}^3} \Lambda^{-s}\lf[
\lf(\f{\lf(\phi^2-1\rg)}{2\rho}-\f{\lf(\overline{\phi^{2}}-1\rg)}{2\bar{\rho}}\rg)\Delta\lf(\phi^2-1\rg)\rg]\Lambda^{-s}\lf(\phi^2-1\rg)}_{I^2_{19}}\\
&
+\underline{\int_{\mathbb{T}^3} \Lambda^{-s}\lf[\f{\lf(\phi^2-1\rg)}{\rho}|\nabla\phi|^2\rg]\Lambda^{-s}\lf(\phi^2-1\rg) d\mathbf{x}}_{I^3_{19}},
\end{aligned}\end{equation} where $\displaystyle\overline{\phi^{2}}=\bbint_{\mathbb{T}^3} \phi^2 d\mathbf{x}.$  Then we have
$$I^1_{19} \overset{\eqref{hr411}}\lesssim \eta_1 \|\Lambda^{-s} \nabla\lf(\phi^2-1\rg)\|^2,$$
$$\begin{aligned}
I^2_{19}&\lesssim {\Big\|}\Lambda^{-s}\lf[
\lf(\f{\lf(\phi^2-1\rg)}{2\rho}-\f{\lf(\overline{\phi^{2}}-1\rg)}{2\bar{\rho}}\rg)\Delta\lf(\phi^2-1\rg)\rg]{\Big\|} \|\Lambda^{-s}\lf(\phi^2-1\rg) \|\\
&
\overset{\eqref{hg25}}\lesssim\lf(\|\rho-\bar{\rho}\|_{L^{\f{3}{s}}}+\|\phi^2-\overline{\phi^{2}}\|_{L^{\f{3}{s}}}\rg)\|\Delta\lf(\phi^2-1\rg)\| \|\Lambda^{-s}\lf(\phi^2-1\rg) \|\\
&\overset{\eqref{hh45}, \eqref{h45}}
\lesssim \lf(\|\nabla\sigma\|_{H^2}^2+\|\nabla\phi\|_{H^2}^2\rg) \|\Lambda^{-s}\lf(\phi^2-1\rg)\|,
\end{aligned}$$  and
$$\begin{aligned}
I^3_{19}&\lesssim {\Big\|}\Lambda^{-s}\lf[\f{\lf(\phi^2-1\rg)}{\rho}|\nabla\phi|^2\rg]{\Big\|} \|\Lambda^{-s}\lf(\phi^2-1\rg) \|\\
&
\overset{\eqref{hg25}, \eqref{hr33}, \eqref{hg313}}\lesssim\|\nabla\phi\|_{L^{\f{3}{s}}}\|\nabla\phi\| \|\Lambda^{-s}\lf(\phi^2-1\rg) \|\\
&\overset{\eqref{hh45}}
\lesssim \|\nabla\phi\|_{H^2}^2 \|\Lambda^{-s}\lf(\phi^2-1\rg)\|.
\end{aligned}$$
Therefore, we obtain from  \eqref{hr412} that
$$I_{19} \lesssim \eta_1 \|\Lambda^{-s} \nabla\lf(\phi^2-1\rg)\|^2+\lf(\|\nabla\sigma\|_{H^2}^2+\|\nabla\phi\|_{H^2}^2\rg) \|\Lambda^{-s}\lf(\phi^2-1\rg)\|.$$
Substituting the estimates on $I_{i} (i=15, \cdots, 19)$ into \eqref{hh44} yields \eqref{hr49}.
The proof of Lemma \ref{lem42} is completed.\end{proof}

\section{The proof of Theorem \ref{theo 2.1}}
\setcounter{equation}{0}

In this section, we shall combine all the energy estimates that we have derived in the previous
section and the Sobolev interpolation to prove Theorem \ref{theo 2.1}.

\subsection{The existence of global solutions}
\begin{proposition}  \label{estimate} \textbf{(a priori estimate).}
There exists a positive constant and $M_1>0$, if  $(\rho,	\mathbf{u}, \phi)\in X_{M_1}\big([0,T^*]\big)$, then
\begin{equation}\label{a priori estimate}
\begin{aligned}
&\sup_{t\in[0,T]}\Big\{\|(\rho-\bar{\rho}, \mathbf{u})(t)\|^2_{H^3}+ \|\nabla \phi(t)\|^2_{H^2}+ \|\phi^2(t)-1\|^2\Big\}\\
&\quad+\int_0^{T}\|\nabla\rho\|^2_{H^2} d\tau+ \int_0^{T}\|\nabla\mathbf{u}\|_{H^3}d\tau+\int_0^{T}\|\nabla\phi\|^2_{H^4} d\tau\\
&\displaystyle\quad\lesssim \|\rho_0-\bar{\rho}\|^2_{H^3}+\|\mathbf{u}_0\|^2_{H^3}+ \|\nabla \phi_0\|^2_{H^2}+\|\phi^2_0-1\|^2.
\end{aligned}
\end{equation}
\end{proposition}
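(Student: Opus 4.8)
The plan is to prove the master differential inequality \eqref{h117} at the top level $l=0$, $m=3$,
$$\f{d}{dt}\mathcal{E}_0^3(t)+c\lf(\|\nabla\sigma\|_{H^2}^2+\|\nabla\mathbf{u}\|_{H^3}^2+\|\nabla\phi\|_{H^4}^2\rg)\leq 0,$$
for an energy functional satisfying $\mathcal{E}_0^3(t)\backsimeq\|(\sigma,\mathbf{u})\|_{H^3}^2+\|\nabla\phi\|_{H^2}^2+\|\phi^2-1\|^2$, and then to integrate it over $[0,T]$. Since mass conservation gives $\int_{\mathbb{T}^3}\sigma\,d\mathbf{x}=0$, the Poincar\'e inequality \eqref{h45} upgrades the $\nabla\sigma$--dissipation to control of $\|\sigma\|_{H^3}$, while \eqref{hr317} promotes the $\|\nabla^3\phi\|$--dissipation to the full $\|\nabla\phi\|_{H^4}$; hence the displayed inequality, once integrated, is precisely \eqref{a priori estimate}, which closes the continuation of the local solution of Proposition \ref{pro311}.

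First I would assemble $\mathcal{E}_0^3$ from the four lemmas already established. Summing \eqref{hr316} over $k=0,\dots,3$ yields the velocity dissipation $\|\nabla\mathbf{u}\|_{H^3}^2$; summing \eqref{hr329} over $k=0,1,2$ yields the phase--field dissipation $\sum_{k=0}^2\|\nabla^{k+3}\phi\|^2$; and the basic estimate \eqref{h38}, together with \eqref{hg314}, supplies the sole control of $\|\phi^2-1\|^2$ and contributes only nonnegative dissipation. The defect is that \eqref{hr316} produces no dissipation for $\sigma$ itself, so I would add a small multiple $\varepsilon$ of the cross--term estimate \eqref{hr326} summed over $k=0,1,2$, whose left--hand side supplies the missing $\f{\varepsilon p'(\bar{\rho})}{2\bar{\rho}}\sum_{k=0}^2\|\nabla^{k+1}\sigma\|^2$. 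Concretely, for fixed positive constants $A,B$,
$$\mathcal{E}_0^3=A\,E_0+\sum_{k=0}^{3}\lf(\|\nabla^k\mathbf{u}\|^2+\f{p'(\bar{\rho})}{\bar{\rho}^2}\|\nabla^k\sigma\|^2\rg)+B\sum_{k=0}^{2}\|\nabla^{k+1}\phi\|^2+\varepsilon\sum_{k=0}^{2}\int_{\mathbb{T}^3}\nabla^k\mathbf{u}\cdot\nabla^{k+1}\sigma\,d\mathbf{x},$$
where $E_0=\int_{\mathbb{T}^3}\lf(\f{\rho}{2}\mathbf{u}^2+G(\rho)+\f12|\nabla\phi|^2+\f{\rho}{4}(\phi^2-1)^2\rg)d\mathbf{x}$ is the Lemma \ref{lem31} energy. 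Since $\big|\varepsilon\int_{\mathbb{T}^3}\nabla^k\mathbf{u}\cdot\nabla^{k+1}\sigma\,d\mathbf{x}\big|\leq\f{\varepsilon}{2}\big(\|\nabla^k\mathbf{u}\|^2+\|\nabla^{k+1}\sigma\|^2\big)$, the cross term is an $O(\varepsilon)$ perturbation, which with \eqref{hg314} gives the required equivalence for $\varepsilon$ small. Note that $\|\phi^2-1\|^2$ rides along inside $\mathcal{E}_0^3$ without any dissipation of its own, which is consistent with \eqref{a priori estimate}.

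Differentiating $\mathcal{E}_0^3$ and substituting the four lemmas, every error term on the right is of the form $\eta_1\times(\text{dissipation})$ or $\varepsilon\times(\text{velocity dissipation})$, where $\eta_1$ is the a priori smallness quantity controlled by $M_1$ via \eqref{h310}. The terms $\eta_1\|\sigma\|_{H^3}^2$ from Lemma \ref{lem32} obey $\|\sigma\|_{H^3}^2\lesssim\|\nabla\sigma\|_{H^2}^2$ by \eqref{h45}; the terms $\sum_{k=0}^3\|\nabla^{k+2}\phi\|^2$ obey $\lesssim\sum_{k=0}^2\|\nabla^{k+3}\phi\|^2$ by \eqref{hr317}; and the $\|\nabla^{k+2}\mathbf{u}\|^2$, $\|\nabla^{k+3}\phi\|^2$ appearing on the right of \eqref{hr326} already sit inside the available velocity and phase--field dissipations. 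The decisive point is the ordering of the small constants: I would fix $\varepsilon$ first, small enough that the genuinely $O(\varepsilon)$ velocity contributions $\varepsilon\sum_{k=0}^2\|\nabla^{k+1}\mathbf{u}\|^2$ from \eqref{hr326} are absorbed into half of $\f{\nu_0}{\bar{\rho}}\|\nabla\mathbf{u}\|_{H^3}^2$; then, with $A,B,\varepsilon$ frozen, shrink $M_1$ so that $\eta_1$ is small enough to absorb all remaining $\eta_1$--terms into the $\sigma$--, $\mathbf{u}$-- and $\phi$--dissipations (in particular $B\eta_1$ must be small against $\varepsilon$, the coefficient of the $\sigma$--dissipation).

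The main obstacle is exactly this two--scale balancing around the weak $\sigma$--dissipation, whose coefficient is only of size $\varepsilon$: one must check that no error term carries an $\varepsilon$--free and $\eta_1$--free multiple of $\|\nabla\sigma\|_{H^2}^2$, so that the $\sigma$--dissipation is never overwhelmed. This is precisely why the cross--term structure of Lemma \ref{lem35}, in which $\sigma$--dissipation is bought at the price of higher--order $\mathbf{u}$-- and $\phi$--dissipation rather than of $\sigma$ itself, is indispensable, and why the $\phi$--coupling on the right of \eqref{hr326} is tolerable. Once $\f{d}{dt}\mathcal{E}_0^3+c(\|\nabla\sigma\|_{H^2}^2+\|\nabla\mathbf{u}\|_{H^3}^2+\|\nabla\phi\|_{H^4}^2)\leq0$ is secured, integrating in time and invoking the equivalence $\mathcal{E}_0^3\backsimeq\|(\sigma,\mathbf{u})\|_{H^3}^2+\|\nabla\phi\|_{H^2}^2+\|\phi^2-1\|^2$ yields \eqref{a priori estimate}.
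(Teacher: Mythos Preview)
Your proposal is correct and follows essentially the same route as the paper: summing Lemmas~\ref{lem31}--\ref{lem35} over the appropriate ranges of $k$, adding a small multiple $\varepsilon$ of the cross--term estimate \eqref{hr326} to generate the missing $\sigma$--dissipation, verifying the equivalence of the resulting energy with $\|(\sigma,\mathbf{u})\|_{H^3}^2+\|\nabla\phi\|_{H^2}^2+\|\phi^2-1\|^2$, and then choosing first $\varepsilon$ and then $\eta_1$ (via $M_1$) small to absorb all error terms. Your extra weights $A,B$ are harmless generalizations (the paper simply takes $A=B=1$), and your explicit discussion of the two--scale balancing and the use of Poincar\'e to handle the $\eta_1\|\sigma\|^2$ term is more detailed than, but entirely consistent with, the paper's argument.
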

\begin{proof}
We first close the energy estimates at each $l-$th level in our weak sense to prove \eqref{h18}.
Let $0\leq l\leq m$ with $1\leq m\leq 3$. Summing up the estimate \eqref{hr329} of Lemma \ref{lem33} for from $k=l$ to $m-1$, we obtain
	\begin{eqnarray}\label{h51}
 && \f{d}{dt}\sum_{l\leq k\leq m-1}\|\nabla^{k+1} \phi\|^2+\f{1}{4\bar{\rho}^2}\sum_{l+1\leq k\leq m}\|\nabla^{k+3}\phi\|^2 \notag\\
 &&\lesssim \eta_1\sum_{l+1\leq k\leq m}\lf(\|\nabla^{k}\sigma\|^2+\|\nabla^{k+1} \mathbf{u}\|^2\rg).
 \end{eqnarray}
Also, summing up the estimate \eqref{hr316} of Lemma \ref{lem32} for from $k=l$ to $m$, we obtain
\begin{equation}
\label{h52}
\begin{aligned}
\f{d}{dt}&\sum_{l\leq k\leq m}\lf(\|\nabla^k\mathbf{u}\|^2
+\f{p'(\bar{\rho})}{\bar{\rho}^2}\|\nabla^k\sigma\|^2\rg)+\f{\nu_0}{\bar{\rho}}\sum_{l\leq k\leq m}\|\nabla^{k+1} \mathbf{u}\|^2\\
&\lesssim \eta_1\sum_{l\leq k\leq m}\lf(\|\nabla^{k}
\sigma\|^2+\|\nabla^{k+2}\phi\|^2\rg).
\end{aligned}\end{equation}
Last, summing up the estimate \eqref{hr326} of Lemma \ref{lem35} for from $k=l$ to $m-1$, we obtain
 \begin{equation}
\label{h53}
\begin{aligned}
\f{d}{dt}&\sum_{l\leq k\leq m-1}\int_{\mathbb{T}^3} \nabla^{k} \mathbf{u}\cdot\nabla^{k+1}\sigma d\mathbf{x}+\f{p'(\bar{\rho})}{2\bar{\rho}}\sum_{l+1\leq k\leq m} \|\nabla^{k} \sigma\|^2
\\
&\lesssim \eta_1 \sum_{l+1\leq k\leq m}\lf(	\|\nabla^{k+1} \mathbf{u}\|^2+\|\nabla^{k+3}\phi\|^2\rg)+\sum_{l\leq k\leq m-1}\|\nabla^{k+1} \mathbf{u}\|^2.
\end{aligned}\end{equation}
Let $\epsilon\in (0, 1]$ be suitably small. Then, summing \eqref{h38}, \eqref{h51},  \eqref{h52} and $\epsilon\times \eqref{h53}$, and choosing $\eta_1>0$ to be small, we obtain
 \begin{equation}
\label{h54}
\f{d}{dt}\mathcal{E}_l^m(t)+\f{1}{2}\Lambda_{l}^m(t)\leq 0
\end{equation}for any $0\leq l< m\leq 3$, where
\begin{equation}\label{h55}\begin{aligned}&\begin{aligned}
\mathcal{E}_l^m(t):&=\sum_{l\leq k\leq m}\lf(\|\nabla^k\mathbf{u}\|^2
+\f{p'(\bar{\rho})}{\bar{\rho}^2}\|\nabla^k\sigma\|^2\rg)+\epsilon \sum_{l\leq k\leq m-1}\int_{\mathbb{T}^3} \nabla^{k} \mathbf{u}\cdot\nabla^{k+1}\sigma d\mathbf{x}
\\
&+\sum_{l\leq k\leq m-1}\|\nabla^{k+1} \phi\|^2+\int_{\mathbb{T}^3}\lf(\f{\rho}{2}\mathbf{u}^2+G(\rho)+\frac{1}{2}|\nabla
\phi|^2+\frac{\rho}{4}(\phi^2-1)^2\rg)d\mathbf{x},\end{aligned}
\\
&\begin{aligned}
\Lambda_{l}^m(t):&=\epsilon \f{p'(\bar{\rho})}{2\bar{\rho}}\sum_{l+1\leq k\leq m} \|\nabla^{k} \sigma\|^2+\f{1}{4\bar{\rho}^2}\sum_{l+1\leq k\leq m}\|\nabla^{k+2}\phi\|^2\\
&+\f{\nu_0}{\bar{\rho}}\sum_{l\leq k\leq m}\|\nabla^{k+1} \mathbf{u}\|^2-\epsilon C_0\sum_{l\leq k\leq m-1}\|\nabla^{k+1} \mathbf{u}\|^2 .\end{aligned}\end{aligned}
\end{equation}
Notice that since $\epsilon\in (0, 1]$ be suitably small, we obtain from \eqref{h55} that
\begin{equation}\label{h56}\begin{aligned}
&\mathcal{E}_l^m(t) \backsimeq \|\nabla^l(\sigma, \mathbf{u})(t)\|^2_{H^{m-l}}+\|\nabla^{l+1} \phi(t)\|^2_{H^{m-1-l}}+\|\phi^2-1\|^2,\\
&\Lambda_{l}^m(t)\backsimeq \|\nabla^l \sigma(t)\|^2_{H^{m-l}}+ \|\nabla^{l+1}\mathbf{u}(t)\|^2_{H^{m-l}}+ \|\nabla^{l+1} \phi(t)\|^2_{H^{m+1-l}}
\end{aligned} \end{equation} uniformly for all $t\geq 0$.
Now taking $l=0$ and $m=3$ in \eqref{h54}, and then  using \eqref{h56} and \eqref{h54}, we get
\begin{equation}\label{h57}
\begin{aligned}
&\|\sigma(t)\|^2_{H^{3}}+\|\mathbf{u}(t)\|^2_{H^{3}}+ \|\nabla \phi(t)\|^2_{H^{3-1}}+\|\phi^2-1\|^2\lesssim \mathcal{E}_0^3(t)\\&\lesssim \mathcal{E}_0^3(0)\lesssim \|\rho_0-\bar{\rho}\|^2_{H^{3}}+\|\mathbf{u}_0\|^2_{H^{3}}+ \|\nabla \phi_0\|^2_{H^{3-1}}+\|\phi_0^2-1\|^2.
\end{aligned}
\end{equation}
Using \eqref{g17} and \eqref{h57}, by a standard continuity argument, we can close the a priori estimate
\eqref{a priori estimate}.  This in turn allows us to take $l=0$ and $m=3$ in \eqref{h54}, and then integrate it directly in time to obtain
 \eqref{a priori estimate}.\end{proof}

\vspace{3ex} By using the Proposition 2.1, a solution for $t\in[0,T^*]$ can be obtained which satisfies \eqref{hg313} in $\mathbb{T}^3\times[0,T^*]$, $T^*$ only depends on the initial data  of the periodic boundary value problem \eqref{h21}. From and Proposition 3.1,  one can start again from $T^*$, by the same way, one can find a solution in $[0,2T^*]$, and so on. Thus the existence and uniqueness of the global solution for system \eqref{hg313} is obtained.

\subsection{Algebraic decay}
We   prove \eqref{h124}-\eqref{h125}. Define $$\mathcal{E}_{-s}:=\f{p'(\bar{\rho})}{\bar{\rho}^2}\|\Lambda^{-s} \sigma\|^2+\|\Lambda^{-s} \mathbf{u}\|^2+\|\Lambda^{-s} \nabla\phi\|^2+\|\Lambda^{-s}\lf(\phi^2-1\rg)\|^2 $$ for $s\in (0, \f{3}{2})$.
Then, integrating in time \eqref{hr41} and \eqref{hr49},  and using  \eqref{h18}, we get
$$\begin{aligned}
\mathcal{E}_{-s}(t)&\lesssim \mathcal{E}_{-s}(0)+ \int_0^t \lf(\|\nabla(\sigma,  \mathbf{u})\|_{H^2}^2+\|\nabla\phi\|_{H^3}^2\rg)\sqrt{\mathcal{E}_{-s}(\tau)}d\tau\\
&\lesssim 1+ \sup_{0\leq \tau\leq t}\sqrt{\mathcal{E}_{-s}(\tau)},
\end{aligned}$$ which implies  \eqref{h124}.
If $l=0,\cdots, 2$, we may use \eqref{hg24} to have
\begin{equation}\label{h512}
\|\nabla^{l+1}f\|\gtrsim\|\Lambda^{-s}f\|^{-\f{1}{l+s}}\|\nabla^{l}f\|^{1+\f{1}{l+s}}.\end{equation}
By \eqref{h512} and \eqref{h124}, we get
\begin{equation}\label{hh512}
\|\nabla\phi\|\overset{\eqref{hr33}}\gtrsim\|\nabla(\phi^2-1)\|\gtrsim\|\phi^2-1\|^{1+\f{1}{s}},\end{equation}
and
$$\|\nabla^{l+1}\sigma\|^2+\|\nabla^{l+1}\mathbf{u}\|^2+\|\nabla^{l+1}\nabla\phi\|^2\gtrsim \lf(\|\nabla^{l}\sigma\|^2+\|\nabla^{l}\mathbf{u}\|^2+\|\nabla^{l}\nabla\phi\|^2\rg)^{1+\f{1}{l+s}}, $$
which implies
\begin{equation}\label{h513}
\begin{aligned}
&\|\nabla ^{l+1} \sigma(t)\|^2_{H^{m-l-1}}+ \|\nabla^{l+1}\mathbf{u}(t)\|^2_{H^{m-l}}+ \|\nabla^{l+2} \phi(t)\|^2_{H^{m-l-1}}\\&
\gtrsim \lf(\|\nabla ^{l} \sigma(t)\|^2_{H^{m-l-1}}+ \|\nabla^{l}\mathbf{u}(t)\|^2_{H^{m-l}}+ \|\nabla^{l+1} \phi(t)\|^2_{H^{m-l-1}}\rg)^{1+\f{1}{l+s}}.
\end{aligned}
\end{equation} Also, using $\|\sigma(t)\|_{H^3}\lesssim 1$ due to \eqref{h18}, we have
\begin{equation}\label{hh513}
\|\nabla^m\sigma\|\gtrsim\|\nabla^m\sigma\|^{1+\f{1}{l+s}}.\end{equation}
By \eqref{hh512}-\eqref{hh513} and \eqref{hr317}, we  obtain from \eqref{h56} that
\begin{equation}\label{hg513}
\Lambda_{0}^1(t)\gtrsim\lf(\mathcal{E}_0^1(t)\rg)^{1+\f{1}{s}}.\end{equation}
Using  \eqref{hg513},  we deduce from \eqref{h54} with $l=0$
and $m=1$ that
$$\f{d}{dt}\mathcal{E}_0^1(t)+C_0\lf(\mathcal{E}_0^1(t)\rg)^{1+\f{1}{s}}\leq 0. $$
Solving this inequality directly gives (see \eqref{h119})
$$\mathcal{E}_0^1(t)\lesssim (1+t)^{-s}, $$
which implies  \eqref{hg124},  that is,
$$
\|(\sigma,\mathbf{u})(t)\|_{H^{1}}^2+\|\nabla \phi(t)\|^2 +\| \phi^2(t)-1\|^2\lesssim (1+t)^{-s},
$$due to $\eqref{h56}_1$.
We prove  \eqref{h125}.
For the small $\epsilon\in (0, 1]$,   summing   \eqref{h51}, \eqref{h52} and $\epsilon\times \eqref{h53}$, and choosing $\eta_1>0$ to be small, we have
\begin{equation}
\label{hg54}
\f{d}{dt}\mathcal{F}_l^m(t)+G_{l}^m(t)\leq 0,\ \  \mathrm{for\ any} \ 0\leq l< m\leq 3,
\end{equation} where
\begin{equation}\label{hg55}\begin{aligned}&\begin{aligned}
\mathcal{F}_l^m(t):&=\sum_{l\leq k\leq m}\lf(\|\nabla^k\mathbf{u}\|^2
+\f{p'(\bar{\rho})}{\bar{\rho}^2}\|\nabla^k\sigma\|^2\rg)
\\
&+\epsilon \sum_{l\leq k\leq m-1}\int_{\mathbb{T}^3} \nabla^{k} \mathbf{u}\cdot\nabla^{k+1}\sigma d\mathbf{x}+\sum_{l\leq k\leq m-1}\|\nabla^{k+1} \phi\|^2,\end{aligned}
\\
&\begin{aligned}
G_{l}^m(t):&=\epsilon \f{p'(\bar{\rho})}{2\bar{\rho}}\sum_{l+1\leq k\leq m} \|\nabla^{k} \sigma\|^2+\f{1}{4\bar{\rho}^2}\sum_{l+1\leq k\leq m}\|\nabla^{k+2}\phi\|^2\\
&+\lf(\f{\nu_0}{\bar{\rho}}-\epsilon C_0\rg)\sum_{l\leq k\leq m}\|\nabla^{k+1} \mathbf{u}\|^2.\end{aligned}\end{aligned}
\end{equation}
Notice that since $\epsilon\in (0, 1]$ be suitably small, we obtain from \eqref{hg55} that
\begin{equation}\label{hg56}\begin{aligned}
&\mathcal{F}_l^m(t) \backsimeq \|\nabla^l(\sigma, \mathbf{u})(t)\|^2_{H^{m-l}}+\|\nabla^{l+1} \phi(t)\|^2_{H^{m-1-l}},\\
&G_{l}^m(t)\backsimeq  \|\nabla ^{l} \sigma(t)\|^2_{H^{m-l}}+ \|\nabla^{l+1}\mathbf{u}(t)\|^2_{H^{m-l}}+ \|\nabla^{l+1} \phi(t)\|^2_{H^{m+1-l}},
\end{aligned} \end{equation} uniformly for all $t\geq 0$.
By \eqref{h513} and \eqref{hh513}, we  obtain from \eqref{hg56} that
\begin{equation}\label{hhg513}
G_{l}^m(t)\gtrsim\lf(\mathcal{F}_l^m(t)\rg)^{1+\f{1}{l+s}}.\end{equation}
Using  \eqref{hhg513},  we deduce from \eqref{hg54} with  $l=2$  and $m=3$ that
$$\f{d}{dt}\mathcal{F}_{2}^3(t)+C_0\lf(\mathcal{F}_{2}^3(t)\rg)^{1+\f{1}{3-1+s}}\leq 0. $$
Solving this inequality directly gives (see \eqref{h119})
$$\mathcal{F}_{2}^3(t)\lesssim (1+t)^{-(2+s)}, $$
which implies
\begin{equation}\label{hh520}
\|\nabla^{2}(\sigma,\mathbf{u})(t)\|_{H^{1}}^2+\|\nabla^{3} \phi(t)\|^2 \lesssim (2+t)^{-(l+s)},
\end{equation} due to $\eqref{hg56}_1$. Using \eqref{hr317}, we obtain \eqref{h125} from \eqref{hh520},
and so far, combining with Subsection 3.1,  Theorem 1.1 is obtained.

\section*{Conflict of Interests}
The authors declare that there is no conflict of interest regarding the publication of this paper.

\end{document}